\newcolumntype{M}[1]{>{\centering\arraybackslash}m{#1}}
\NewDocumentCommand{\ceil}{s O{} m}{%
 \IfBooleanTF{#1} 
 {\left\lceil#3\right\rceil} 
 {#2\lceil#3#2\rceil} 
}
\NewDocumentCommand{\floor}{s O{} m}{%
 \IfBooleanTF{#1} 
 {\left\lfloor#3\right\rfloor} 
 {#2\lfloor#3#2\rfloor} 
}
 \pgfplotsset{compat=1.6}
\newtheorem{theorem}{Theorem}[section]
\newtheorem{proposition}[theorem]{Proposition}
\newtheorem{lemma}[theorem]{Lemma}
\newtheorem{corollary}[theorem]{Corollary}
\newtheorem{definition}[theorem]{Definition}
\theoremstyle{definition}
\newenvironment{remark}
 {\pushQED{\qed}\remarkx}
 {\popQED\endremarkx}
\newtheorem{assumption}{Assumption}
\numberwithin{equation}{section}
\definecolor{pantone}{HTML}{00AEC7}
\colorlet{pantone2}{pantone!90!black}
\colorlet{pantone3}{pantone!80!black}
\definecolor{darkblue}{rgb}{0.03,0.23,0.36}
\colorlet{darkblue2}{darkblue!90!white}
\colorlet{darkblue3}{darkblue!80!white}
\definecolor{orangelike}{HTML}{FF8432}
\colorlet{orangelike2}{orangelike!80!black}
\colorlet{orangelike3}{orangelike!60!black}
\definecolor{aluminium}{rgb}{0.11,0.15,0.19}
\colorlet{aluminium2}{aluminium!90!white}
\colorlet{aluminium3}{aluminium!80!white}
\definecolor{gray}{rgb}{0.97,0.97,0.93}
\colorlet{gray2}{gray!90!black}
\colorlet{gray3}{gray!20!black}
\begin{document}

\begin{center}
{\sc \Large On the Hill relation and the mean reaction time
 \\
\vspace{0.2cm}
for metastable processes}
\vspace{0.5cm}

\end{center}

\noindent{\bf Manon Baudel}\\
{\it  CERMICS (ENPC), France}\\
\textsf{manon.baudel@enpc.fr}
\bigskip

\noindent{\bf Arnaud Guyader}\\
{\it LPSM (Sorbonne Universit\'e) \& CERMICS (ENPC), France }\\
\textsf{arnaud.guyader@upmc.fr}
\bigskip

\noindent{\bf Tony Leli\`evre\footnote{Corresponding author.}}\\
{\it  CERMICS (ENPC) \& INRIA, France}\\
\textsf{tony.lelievre@enpc.fr}
\bigskip

\medskip

\begin{abstract}
\noindent
We illustrate how the Hill relation and the notion of quasi-stationary
distribution can be used to analyse the biasing error introduced by many
numerical procedures that have been proposed in the literature, in particular
in molecular dynamics, to compute mean reaction times between metastable
states for Markov processes. The theoretical findings are
illustrated on various examples demonstrating the sharpness of the
biasing error analysis as well as the applicability of our study to elliptic diffusions.
\end{abstract}

 \bigskip

\noindent \emph{Index Terms}: Source-sink process, Hill relation,
Transition path process, Reactive trajectory, Quasi-stationary distribution.\medskip

\noindent \emph{2020 Mathematics Subject Classification}: 60J22,
65C40, 82C31.

\tableofcontents

\section{Introduction}
\label{Sec:Intro}

This work is motivated by the computation of reaction times in
thermostated molecular dynamics. In this context, the evolution of a
molecular system is typically modelled by the Langevin dynamics 
\begin{equation}\label{eq:Langevin}
\left\{
 \begin{aligned}
 d q_t &= M^{-1} p_t dt   ,\\
d p_t &= - \nabla V(q_t) dt - \gamma M^{-1} p_t dt + \sqrt{2 \gamma \beta^{-1}} dW_t  ,
 \end{aligned}
 \right.
\end{equation}
where $q_t\in
\R^d $ and $p_t\in \R^d$ denote the positions and momenta of the nuclei, 
or the overdamped Langevin dynamics, which is in position space only:
\begin{equation}\label{eq:OLangevin}
 d q_t = - \nabla V(q_t) dt + \sqrt{2 \beta^{-1}} dW_t  .
\end{equation}
In these equations, $\beta = (k_\mathrm{B} T)^{-1}$ is the inverse
temperature, $M$ is the mass matrix, $V: \R^d \to \R$ is the potential energy function, 
$\gamma>0$ is the damping parameter, 
and $W_t$ is a $d$-dimensional Brownian motion.

In practice, these dynamics are metastable, meaning that the stochastic process
spends most of its time in some connected sets of the phase space, called
metastable states. Metastable states can be for example energetic
traps (think of basins of attraction of local minima of $V$) or
entropic traps (think of large connected sets where $V$ is relatively flat,
with only narrow exit regions on the boundaries). Metastable states typically correspond to some
macroscopic states of the system, and studying the transitions between
them is thus of first importance to understand the molecular
mechanisms associated with these transitions. 

As an example, one
could think of a system consisting of a protein and a
ligand, in which case studying the transition from the bound state
(when the ligand is within a pocket of the protein) to the unbound
state (when the ligand is detached from the protein) is key for some
applications in drug design~\cite{ribeiro2018kinetics,copeland2006drug}. Indeed, computing the
mean reaction time from the bound state to the unbound state is crucial to rank the efficiencies of ligands for a given target
pocket. 

However, transitions between
metastable states are rare
events, which make the simulation of these transitions very
difficult. This is related to a timescale problem: for example, the typical
timestep for the discretisation of the Langevin dynamics is of the
order of $10^{-15} s$, while transitions between metastable states can
occur over timescales of order $10^{-6} s$ to $10^2 s$. This explains why
naive algorithms cannot be used to simulate such events. 

To formalize the problem, let us denote by
$X_t=(q_t,p_t)$ (resp.\ $X_t=q_t$) the Markov process of interest for
the Langevin (resp.\ overdamped Langevin) dynamics and $A$ and $B$ two
disjoint sets which define the
metastable states of interest. Notice that, in practice, these states
are typically defined in the position space only, so that in the
context of the Langevin dynamics, $A=A_q \times \R^d$ and $B=B_q
\times \R^d$ where $A_q$ and
$B_q$ are two disjoint subsets of the position space $\R^d$. As illustrated on
\Cref{FigEntrancesTimes}, let us also introduce the successive
reactive entrance times in $A$ and $B$ as $\tau^A_0= \inf \setsuch{ t > 0}{ X_t \in 
                 \bar A}$,  $\tau^B_0=\inf \setsuch{ t > \tau^{A}_{0}}{ X_t \in  \bar B }$ and, for all $n\geq0$,
$$ \tau^{A}_{n+1} = \inf \setsuch{ t > \tau^{B}_{n}}{ X_t \in 
                 \bar A}\hspace{1cm}\text{ and }\hspace{1cm}
\tau^{B}_{n+1} = \inf \setsuch{ t > \tau^{A}_{n+1}}{ X_t \in  \bar B }.$$ 
We use the qualifier ``reactive'' in reactive entrance time to
indicate that the times $(\tau^A_n)$ are entrance times in $A$ for
trajectories coming from $B$, and the times $(\tau^B_n)$ are entrance
times in $B$ for trajectories coming from $A$.

\begin{figure}
\begin{center}
\input{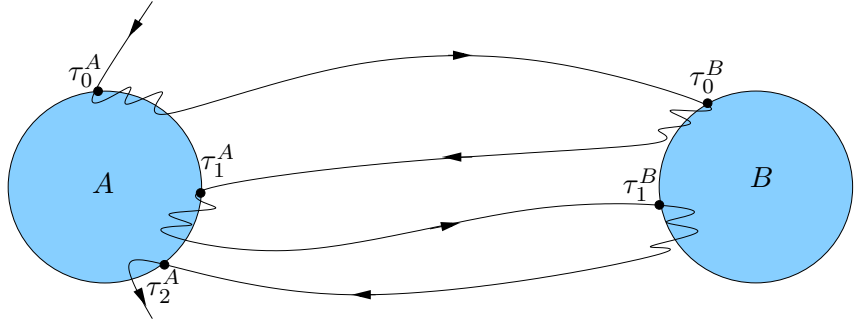}
\caption{Reactive entrance times in the sets $A$ and $B$.}
\label{FigEntrancesTimes}
\end{center}
\end{figure}

One is then interested in computing the mean reaction time from $A$ to
$B$ at equilibrium 
\begin{equation}
T_{AB}:=\lim_{N \rightarrow \infty} \frac{1}{N}
\sum_{n=1}^N (\tau^B_{n}- \tau^A_n),
\label{Eq:MeanTransitionTimeIntro}
\end{equation}
which is the equilibrium average duration of a path from $A$ to $B$. A path
from $A$ to $B$ is a trajectory that starts from the boundary of $A$
coming from $B$ and goes to $B$: such a trajectory performs many
visits to $A$ before going to $B$. The last part, from the boundary of $A$ to $B$
without going back to $A$, is called a reactive trajectory. More
generally, a reactive
trajectory is a trajectory which leaves $A$ and goes to $B$ without going back
to $A$, or leaves~$B$ and goes
to $A$ without going back to $B$.

Many techniques have been proposed to estimate this quantity, relying on more or less aggressive assumptions. We do not intend here to give an exhaustive list of numerical methods. Specifically, we are interested in techniques that are based on the introduction of a source in $A$ and a sink in $B$ in order to create a non-steady state flux from $A$ to $B$, and that approximate the reaction time by measuring the non-equilibrium flux. This idea dates back to~\cite{farkas1927keimbildungsgeschwindigkeit,Kramers_HA} and has been used in the weighted ensemble technique
\cite{zhang2010weighted,aristoff2018optimizing},
milestoning~\cite{Vanden-EijndenVenturoli,FaradjianRon,aristoff2016mathematical},
and Transition Interface Sampling~\cite{VanErpMoroniBolhuis}. We also
have in mind multilevel splitting techniques
\cite{kahn1951estimation,cgr} such as Forward Flux
Sampling~\cite{allen2009forward}, Non Equilibrium Umbrella
Sampling~\cite{dickson2010enhanced,thiede2016eigenvector} or Adaptive
Multilevel Splitting~\cite{cg,cerou2011multiple}. In particular, we
will show why the Hill relation is a cornerstone to analyse the
biasing error introduced by these methods.

The objective of this work is to give rigorous mathematical
foundations to such computations, by clarifying the bias introduced
when applying the Hill relation, depending on the distribution
used to re-inject the process in $A$ after hitting $B$. In particular,
we identify the ideal probability measure that should be considered
so that no bias is introduced: this is the reactive entrance
distribution $\nu_{\mathrm{E}}$ (where the subscript $\mathrm E$
refers to Entrance). In addition, we explain why, when $A$ is
metastable, it is possible to use a quasi-stationary distribution
$\nu_{\mathrm{Q}}$ to re-inject the process in $A$, while keeping a
small biasing error. This is crucial because this local equilibrium is
widely used in practice since it is easy to sample, contrary to the
reactive entrance distribution. The sharpness of our biasing error estimate is illustrated on various examples.

More precisely, the paper is organized as follows. In \Cref{sec:motiv}, we present in
more details the motivation of this work, namely the computation of
mean reaction times for diffusion processes. We show that this
question is equivalent to computing a quantity of the form
\begin{equation}
 \expecin{\nu_{\mathrm{E}}}{\sum_{n=0}^{\cT_{\cB} -1} f(Y_n) }
 \label{Eq:QOIprime}
\end{equation}
for some test function $f$ and a discrete-time Markov process $(Y_n)$ with values in
a domain $\cA \cup \cB$, where $\cA$ and $\cB$ are disjoint
sets. In~\eqref{Eq:QOIprime}, $\nu_{\mathrm{E}}$ is the so-called reactive
entrance distribution in $\cA$ and $\cT_{\cB}:= \inf \setsuch{ n  \geq 0 }{Y_n \in \cB}$ is the hitting time of $\cB$.
This setting is detailed in~\Cref{Sec:DTP}, where the
problem is stated in a rather general
framework. This section also gives the main assumptions as well as
the definitions of auxiliary Markov chains needed to perform the
analysis. Let us emphasize that the aim of those two
  preliminary sections (Sections~\ref{sec:motiv}
  and~\ref{Sec:DTP}) is to provide a clear mathematical formulation of the
  computational problem we are interested in. We recall 
  mathematical results on the objects we introduce (in particular the
  quasi-stationary distribution and the $\pi$-return process), which are for most of them already known, sometimes in different settings,
  and stated here mainly for the sake of self-completeness. The
  reader who is already familiar with these tools can skip the
  mathematical details. We provide a summary of the notation in \Cref{TabDifferentProcesses}, at the end of \Cref{Sec:DTP}.

Let us go back to the computation of~\eqref{Eq:QOIprime}. A naive
 approximation using a Monte Carlo method is
  clearly challenging when the sets $\cA$ and $\cB$ are metastable for two reasons: first,
$\cT_{\cB}$ is very large and, second, $ \nu_{\mathrm{E}}$ is difficult
to sample, and not known analytically in general. However, since $\cA$ is metastable,
the process $(Y_n)$ reaches a ‘‘local equilibrium
within $\cA$’’ (namely, a quasi-stationary distribution
$\nu_{\mathrm{Q}}$) before transitioning to $\cB$, and this can be
used to recast the problem in a form more appropriate for computations. Hence, the objective of this
work is twofold:
\begin{itemize}
\item First, we explain why, when replacing $\nu_{\mathrm{E}}$ by a
quasi-stationary distribution~$\nu_{\mathrm{Q}}$, the quantity~\eqref{Eq:QOIprime} can be
efficiently approximated using the Hill relation (see
Sections~\ref{Sec:Hill} and~\ref{sec:diff2}). Specifically, denoting
by $\pi_{0 | \mathcal A}$ the stationary distribution $\pi_0$ of
$(Y_n)$ conditioned to $\cA$, the Hill relation yields on the one hand
$$\expecin{\nu_{\mathrm{E}}}{\sum_{n=0}^{\cT_{\cB} -1} f(Y_n) } = \frac{\pi_{0 | \mathcal A} f}{\probin{\pi_{0 | \mathcal A}}{Y_1 \in \cB}},$$
which cannot be used directly in practice since, in general, it is
unclear how to sample $\pi_{0 | \mathcal A}$,  and on the other hand
\begin{equation}\label{eq:Hill_Intro}
\expecin{\nu_{\mathrm{Q}}}{\sum_{n=0}^{\cT_{\cB} -1} f(Y_n) } = \frac{
  \nu_{\mathrm{Q}}f }{\probin{\nu_{\mathrm{Q}}}{Y_1 \in \cB}}.
\end{equation}
The right-hand side of the latter equality can be
estimated thanks to a direct sampling of $\nu_{\mathrm{Q}}$ and the use of rare
event sampling methods such as those mentioned earlier (weighted
ensemble techniques, Transition Interface Sampling, Adaptive
Multilevel Splitting, etc.). The Hill relation can be seen as a tool to replace a
longtime computation (the left-hand side) by a rare event sampling
problem (the right-hand side).
\item Second, we
quantify the biasing error introduced when replacing
$\nu_{\mathrm{E}}$ by $\nu_{\mathrm{Q}}$ (see
Section~\ref{Sec:Bound}). This is the purpose of
Theorem~\ref{Prop:EqGene}, the main mathematical result of this work,
which shows that under a timescale separation assumption, the relative
biasing error
$$\abs{ \frac{ \expecin{\nu_{\mathrm{E}}}{\sum_{n=0}^{\cT_{\cB} -1} f(Y_n) } - \expecin{\nu_{\mathrm{Q}}}{ \sum_{n=0}^{\cT_{\cB} -1} f(Y_n) } } 
 { \expecin{\nu_{\mathrm{E}}}{ \sum_{n=0}^{\cT_{\cB} -1} f(Y_n) } }
}$$
is small.
\end{itemize}
The aim of \Cref{Sec:Hill} and \Cref{Sec:Bound} is thus to provide
all the details on these two points.
Then, returning to the setting of \Cref{sec:motiv}, \Cref{Sec:Diffusion}
discusses the applicability of the theory to the computation of
reaction times for diffusion processes, as well as practical
numerical procedures to estimate the quantities of interest in the right-hand
side of~\eqref{eq:Hill_Intro}. Finally, \Cref{SSec:DTTM}
demonstrates on simple toy models the sharpness of the biasing error estimate
given by Theorem~\ref{Prop:EqGene}, \Cref{SSec:ExMarkovNonRev} shows
that reversibility may not be conserved for the reactive entrance
process, while the proofs are all housed in \Cref{SSec:Proofs}. We
also provide, in \Cref{Sec:birkoff}, some details about the
Birkhoff's approach~\cite{Birkhoff1957} to prove existence, uniqueness and convergence to the
quasi-stationary distribution under the so-called two-sided condition.

\section{Motivation: how to compute mean reaction times?}\label{sec:motiv}

The purpose of this section is to motivate by the following question the discrete-time setting
that will be at the core of Section~\ref{Sec:DTP}: how to compute
reaction times from one metastable state to another for a diffusion
process? This section can be easily skipped if one is only interested
in the mathematical results we have obtained, forgetting about this motivation. 

\subsection*{The diffusion process and reaction time}
Let $(X_t) \in \R^d$ be the solution of the stochastic differential equation 
\begin{equation}
d X_t = f(X_t) dt +  g(X_t) d W_t ,\label{Eq:SDE}
\end{equation}
where
 $(W_t)$ denotes a $k$-dimensional standard Wiener process.
Here we assume that $f: \R^d \to \R^d$ and $g: \R^d \to \R^{d \times k}$ are smooth and satisfy conditions that guarantee the ergodicity of the
Markov process $(X_t)$ with respect to a unique invariant probability
distribution. For example, one could keep in mind the Langevin and
overdamped Langevin dynamics~\eqref{eq:Langevin} and~\eqref{eq:OLangevin}.
%

Let $A,B \subset  \R^d $ be two open
sets with smooth boundaries such that $\bar A$ and $\bar B$ are
disjoint, and with non-zero measure for the invariant probability
distribution of~\eqref{Eq:SDE}.
Since the process is ergodic, $(X_t)$ will visit $A$ and~$B$
infinitely often. We are interested in the paths from $A$ to $B$,
namely the pieces of the trajectory $t
\mapsto X_t$ that, coming from~$B$, pass from $A$ to $B$ (see \Cref{FigEntrancesTimes}). 

For this, let us recall the definition of the reactive entrance times:
$\tau^A_0= \inf \setsuch{ t > 0}{ X_t \in 
                 \bar A}$,  $\tau^B_0=\inf \setsuch{ t > \tau^{A}_{0}}{ X_t \in  \bar B }$ and, for all $n\geq0$,
$$ \tau^{A}_{n+1} = \inf \setsuch{ t > \tau^{B}_{n}}{ X_t \in 
                 \bar A}\hspace{1cm}\text{ and }\hspace{1cm}
\tau^{B}_{n+1} = \inf \setsuch{ t > \tau^{A}_{n+1}}{ X_t \in  \bar B },$$ 
and the associated empirical entrance distribution in  $A$:
\begin{equation}
\mu_{A,N}^+ = \frac{1}{N} \sum_{n=1}^{N}\delta_{X_{\tau^A_{n}}}.
\label{EqNu0Empi}
\end{equation}
The reactive entrance distribution $\nu_{\mathrm{E}}$ in $A$ is defined as
the weak limit of $\mu_{A,N}^+$ as $N \rightarrow
\infty$ (see~\cite[Proposition 1.5]{Lu_Nolen_2015} where $\nu_{\mathrm{E}}$ is denoted $\eta_A^+$), meaning that for any continuous and bounded $f:\partial A\to\R$, one has 
\begin{equation}
\int_{\partial A} f(x) \mu_{A,N}^+(dx)\xrightarrow[N\to\infty]{a.s.}\int_{\partial A} f(x) \nu_{\mathrm{E}}(dx).
\label{lkncmkzdmzd}
\end{equation}
Note that the reactive entrance distribution $\nu_{\mathrm{E}}$ is not
the restriction of the stationary measure of $(X_t)$ to the boundary
$ \partial A$ since $\nu_{\mathrm{E}}$ only takes into account the paths
that reach $A$ coming from $B$. In general, even if the original
process has an explicit invariant measure, $\nu_{\mathrm{E}}$ does not
admit a simple analytical expression, see~\cite{Lu_Nolen_2015} for more details. 


As already mentioned, the quantity of interest in the present article is
the expected reaction time $T_{AB}$ from $A$ to $B$, that is
\begin{equation}
T_{AB }:= \lim_{N \rightarrow \infty} \frac{1}{N}
\sum_{n=1}^N (\tau^B_{n}- \tau^A_n) = \expecin{\nu_{\mathrm{E}}}{\tau_{B}},
\label{Eq:MeanTransitionTime}
\end{equation}
where $\tau_B$ is the hitting time of $\bar B$.
The last equality is expected using the ergodicity of the
original process and the strong Markov property. It is for example
rigorously proved in~\cite{Lu_Nolen_2015}, see Equation (1.32) and
Proposition 1.8, in the case where $A$ and $B$ are bounded smooth, and
$gg^T$ is
  bounded from above and from below by strictly positive constants. The
  proof in~\cite{Lu_Nolen_2015} uses an ergodicity result (Doeblin's minorization condition)
  on the sequence of paths
$( (X_{(\tau^A_n +t)\wedge \tau^B_n})_{t \ge 0})$ seen as a Markov chain
indexed by $n$, see the proof of~\cite[Theorem 1.7]{Lu_Nolen_2015}. 

\subsection*{From a diffusion $(X_t)$ to a Markov
chain $(Y_n)$}

As explained in the introduction, to estimate the expected reaction
time $T_{AB}$, many numerical procedures rely on the
following construction. The
path from $A$ to $B$ is divided into two parts: the so-called loops, namely
the pieces of the trajectory between successive visits of $A$ 
without visiting $B$, and the so-called reactive trajectory, namely
the last part of the trajectory, which leaves~$A$ and goes to $B$
without going back to $A$. The idea is then to estimate the reaction
time by multiplying the average number of loops by their mean duration, and
then adding the expected duration of the reactive trajectory. However, because of the irregularity of the
paths, the successive entrances in $A$ do not define in general a
countable set: one thus needs to define the loops and the successive
entrances in $A$ in a different way. This is why  a separation surface $\Upsigma$ between $A$ and $B$ is introduced.

Let $\Upsigma$ be a smooth submanifold of codimension 1, such that the
probability starting from $ A$ to reach $B$
without crossing $\Upsigma$ is zero and such that
$\bar\Upsigma \cap \bar A = \varnothing$ and $\bar\Upsigma \cap \bar B =
\varnothing$. For example, as illustrated in \Cref{FigHittDis}, such a submanifold
can be constructed as the boundary of a smooth compact set $S$
which contains $\bar A$ and does not intersect $\bar B$. Denote $\cA = \partial A$, $\cB
= \partial B$ and $ \cE= \cA \cup \cB$.
Considering a trajectory $(X_t)$ with initial condition $X_0\in\cE$, the hitting times are defined
inductively by $\tau_{Y,0} = 0$ and, for all $k\geq0$,
\begin{align}
\tau_{\Upsigma,k+1} = \inf \setsuch{t> \tau_{Y,k}}{X_t \in \Upsigma}
                     \hspace{1cm} \text{ and }\hspace{1cm} \tau_{Y,k+1} = \inf \setsuch {t> \tau_{\Upsigma,k+1}}{X_t \in A \cup B} .
\end{align}

\begin{figure}
\begin{center}
\input{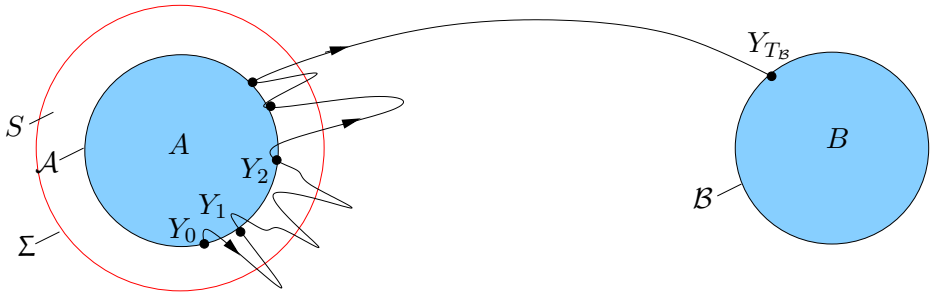}
\caption{The Markov chain $(Y_n)$.}
\label{FigHittDis}
\end{center}
\end{figure}

Let us then introduce the sequence (see \Cref{FigHittDis})
\begin{equation}
\forall n  \geq 0, \, Y_n = X_{\tau_{Y,n}}
\label{Eq:MCDiffusion}
\end{equation}
 of successive intersections of $(X_t)$
 with $ A \cup B$, with intermediate hits to $\Upsigma$ before returning to $ A
 \cup B$. 
The strong Markov property implies that the law of $Y_{n+1}$ given
$Y_n$ is independent of $n$ and of the past trajectory $(Y_m)_{m<n}$. In other
words, $(Y_n)$ forms a time-homogeneous Markov chain
on $ \cE$. We thus obtain a discrete-time continuous-space Markov chain with transition kernel $K$ defined by
\begin{equation}
K(x,\cC) = \probin{x}{Y_1 \in \cC}
\end{equation}
for any $x\in  \cE$ and any Borel set $\cC \in \mathscr{B}( \cE)$.

\begin{remark}[Reversibility of $(Y_n)$]\label{rem:rev1}
In general, the Markov chain $(Y_n)$ defined by
\eqref{Eq:MCDiffusion} is not reversible even if this is the case for the original process
$(X_t)$, and its stationary distribution does not admit an analytic
expression. For a related discussion, we refer to Remarks~\ref{rem:rev2} and~\ref{rakscuhaozuc}, and Appendix~\ref{SSec:ExMarkovNonRev}.
\end{remark}

\subsection*{Expressing the mean reaction time $T_{AB}$ as a function of $(Y_n)$}
It turns out that the mean reaction time $T_{AB}= \expecin{\nu_{\mathrm{E}}}{\tau_{B}}$ in \eqref{Eq:MeanTransitionTime} can be expressed in terms of  the Markov chain $(Y_n)$.
To make this connection, let us define 
the function $\Delta:  \cE \mapsto \R^+ $ by
\begin{equation}
\label{eqFuncDelta}
\Delta(x) = 
\begin{cases}
\expecin{x}{\tau_{Y,1}}				& \text{for $x \in \cA$,}\\
0													& \text{for $x \in \cB$.}
\end{cases}
\end{equation}
Denoting $\cT_{\cB} = \inf \setsuch{ n  \geq 0 }{Y_n \in \cB}$, the reaction time can then be reformulated as
\begin{equation}
T_{AB} = \expecin{\nu_{\mathrm{E}}} {\sum_{n=0}^{\infty} \npar{\tau_{Y,n+1}- \tau_{Y,n}} \mathbf{1}_{n < \cT_{\cB}} } 
\end{equation}
or, thanks to the strong Markov property, 
\begin{equation}
T_{AB} = \expecin{\nu_{\mathrm{E}}} {{\sum_{n=0}^{\cT_{\cB}- 1}} \Delta(Y_n) }.
\label{EqDeltaTimeWithPoisson}
\end{equation}

The latter is the quantity of interest that will be considered in
the following. Notice that, even though the process $(Y_n)$ and the
function $\Delta$ depend on the submanifold
$\Upsigma$, the left-hand side $T_{AB}$ is
independent of $\Upsigma$. Accordingly, $\Upsigma$ can be
seen as a tuning parameter. In the three upcoming sections
we will discuss the computation of quantities of the
form~\eqref{EqDeltaTimeWithPoisson} for a Markov chain
$(Y_n)$. Eventually, we will return to the
application of our results to the diffusive case in
Section \ref{Sec:Diffusion}.

Notice that by considering other functions $\Delta$, one can thus have
access to different equilibrium properties over the ensemble of
 paths 
 entering $A$ and going to $B$  at equilibrium. These are the paths $( (X_{(\tau^A_n +t)\wedge
   \tau^B_n})_{t \ge 0})$ at equilibrium, sometimes also
 called transition paths~\cite{bolhuis-chandler-dellago-geissler-02}, not to be
 confused with the reactive paths introduced above, even though there
 is no consensus on this denomination in the literature:
 transition path is sometimes used as a synonym of reactive path,
 as for example in~\cite{Lu_Nolen_2015,E_VandeEijnden_2006}.

\begin{remark}[Generalisation to other Markov processes]
We wrote everything starting from a diffusion process, but all this
can be  generalised to other Markov processes: jump processes,
discrete-time Markov processes, etc. In particular, one should keep
in mind that in the context of molecular dynamics, only a
time-discretized version of the (overdamped) Langevin
dynamics is used in practice, which means that the
original process is actually a Markov chain in this context. 
\end{remark}

\begin{remark}[The case of the Langevin dynamics]\label{rem:Langevin}
For the Langevin dynamics~\eqref{eq:Langevin}, it is possible to define the successive entrance times
in $A_q \cup B_q$ as
$$\tau_{Y,k+1}= \inf \{t > \tau_{Y,k}, q_t \in A_q \text{ and }
p_t\cdot n_{A_{q}} < 0, \text{ or } q_t \in B_q \text{ and }
p_t\cdot n_{B_{q}} < 0\},$$
where $n_{A_{q}}$ and $n_{B_{q}}$ are the unit outward normals to $A_{q}$ and
$B_{q}$. These successive times are indeed such that $\lim_{k \to \infty}
\tau_{Y,k}=\infty$ almost surely, under some smoothness
assumption on $A_q$ and $B_q$. There is thus no need to introduce the
intermediate submanifold $\Upsigma$ in this context. In addition, it
is possible in this case to identify an analytical formula
for the equilibrium distribution $\pi_0$ of $(Y_n)$ (see~\cite[Chapter
3]{lopes19:PhD} for related considerations and~\cite[Chapter 5]{ramil:PhD} for explicit formulas). This yields to alternative
numerical methods based on the exact Hill relation (see
Equation~\eqref{EqHillEntranceDist} in Section~\ref{sec:EqHillEntranceDist} below), without having to replace~$\nu_\mathrm{E}$ by
the quasi-stationary distribution~$\nu_\mathrm{Q}$. Nevertheless, in practice, this
does not seem to be used by practitioners. We will investigate in
future works the interest of such an approach compared to the usual
method which consists in introducing the intermediate submanifold
$\Upsigma$. Likewise, we do not prove in this work that the
relations~\eqref{Eq:MeanTransitionTime} as well as the assumptions on $(Y_n)$ stated below
hold for the Langevin dynamics (for the sake of simplicity, we indeed check that they hold only for
elliptic diffusions and compact domains $\mathcal A$ and $\mathcal B$, see Section~\ref{Sec:Diffusion}). Again, this will be
the subject of a future work. 
\end{remark}

\section{The discrete-time setting}\label{Sec:DTP}
In this section, we detail the Markov chain setting we have in mind. After the introduction of the main notation and assumptions in Section~\ref{Sec:Yn}, we present related Markov chains that play a central role in our context.
First, the reactive entrance process is studied in Section \ref{sec:reactiveproc}. Then, the process
killed when leaving $\cA$, or killed process, is defined in Section~\ref{Ssec:Condi}. The latter allows us to introduce the notion of quasi-stationary distribution in
$\cA$, which is the formalization of the ``local equilibrium within~$\cA$'' reached by the process when it is trapped in the initial
metastable state. Finally, the $\pi$-return process presented in
Section~\ref{Ssec:RProcess} is the formalization of the
source-sink process discussed in the introduction, and is required to establish the Hill relation which is at the core of Section \ref{Sec:Hill}.

\subsection{Notation and assumptions}\label{Sec:Yn}
The purpose of this section is to introduce notation and assumptions on the Markov chain $(Y_n)$ that we will consider throughout this article.

\paragraph{The Markov chain $(Y_n)$.} Let $\cA$ and $\cB$ be two disjoint compact sets of a separable metric space, typically $\R^d$ or a discrete set such as $\Z^d$.
Consider a Markov chain $(Y_n)$ on
$ \cE:= \cA \cup \cB$ with transition kernel $K$, meaning that for any $x \in  \cE$ and any $\cC \in \mathscr{B}( \cE)$, the corresponding Borel-$\sigma$-algebra, we have
\begin{equation}
K(x,\cC)= \probin{x}{Y_1 \in \cC}.
\end{equation}
As usual, $K^n$ stands for the $n$-step transition kernel defined by
$K^0(x,\cC)=\delta_x(\cC) = \mathbf{1}_{x\in \cC} $ and, for all $n\geq 1$,
\begin{equation}
K^n(x,\cC) = \probin{x}{Y_n \in \cC} = \int_{ \cE} K^{n-1}(x,dz) K(z,\cC) .
\end{equation}
The Markov kernel $K$ induces two Markov semi-groups in the standard way: the
 probability measure $\pi K$ defined for any $\cC \in \mathscr{B}( \cE)$ by
\begin{equation}
\pi K(\cC) = \int_{  \cE} \pi(x) K(x,\cC) = \probin{\pi}{Y_1 \in \cC}
\end{equation}
is associated with any probability measure $\pi$ on $ \cE$, while for any test (i.e., bounded measurable) function $f:  \cE \rightarrow \R$, one can also consider the test function $Kf$, defined for any $x\in \cE$ by
\begin{equation}
Kf(x) = \int_{  \cE} K(x,dy) f(y) = \expecin{x}{f(Y_1)}.
\end{equation}
We recall some concepts for Markov chains on general spaces that will
prove useful in the following (see  for example \cite[Section 4.2]{HernandezLasserre} and \cite[Chapter 9]{MeynTweedieMCSto}). For any $\cC \in \mathscr{B}( \cE)$, the hitting and return times for the Markov chain $(Y_n)$ are respectively defined by
\begin{equation}
\cT_{\cC} = \inf \setsuch{n  \geq 0 }{Y_n \in \cC}\hspace{1cm} \text{ and } \hspace{1cm} \cT^+_{ \cC}= \inf\setsuch{n  \geq 1}{Y_n \in \cC} .
\label{EqHittingTimes}
\end{equation}
If $\pi$ denotes a probability measure on $( \cE,\mathscr{B}( \cE))$, a Markov chain $(Y_n)$ is called $\pi$-\emph{irreducible} if, for all $x \in  \cE$ and all $\cC \in \mathscr{B}( \cE)$ such that $\pi( \cC)>0$, one has
\begin{equation}
\expecin{x}{\sum_{n =1}^\infty \mathbf{1}_{\braces{Y_n \in \cC}}}=\sum_{n =1}^\infty\probin{x}{Y_n \in \cC}=\sum_{n =1}^\infty K^n(x,\cC)  >0,
\end{equation}
or, equivalently, if
\begin{equation}
\probin{x}{\cT^+_{\cC} < \infty} >0.\label{Eq:DefPhiIrreducibleMC}
\end{equation}
It is called \emph{recurrent} if there exists a probability measure $\pi$ such that 
\begin{equation}
\expecin{x}{ \sum_{n =1}^\infty \mathbf{1}_{\braces{Y_n \in \cC}}} = \infty, \qquad \forall x \in  \cE, \forall \cC \in \mathscr{B}( \cE) \text{ such that } \pi( \cC)>0.\label{kahcbikzck}
\end{equation}
 It is called \emph{Harris recurrent} if there exists a probability measure $\pi$ such that 
\begin{equation}
\biggprobin{x}{ \sum_{n =1}^\infty \mathbf{1}_{\braces{Y_n \in \cC}} = \infty} =1, \qquad \forall x \in  \cE, \forall \cC \in \mathscr{B}( \cE) \text{ such that } \pi( \cC)>0 . \label{eq:pos_Har_rec}
\end{equation}
Clearly, Harris recurrence implies recurrence, which itself implies $\pi$-irreducibility. If a (Harris) recurrent Markov chain admits an invariant
probability measure, then it is called positive (Harris)
recurrent.
We recall \cite[Proposition 4.2.11]{HernandezLasserre}:
\begin{proposition}\label{oaihzdozihd}
A (Harris) recurrent Markov chain admits (up to a multiplicative
constant) a unique invariant measure. Hence, if the invariant measure happens to be a probability measure, then the Markov chain is positive (Harris) recurrent.
\end{proposition}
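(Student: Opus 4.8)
The plan is to reduce the general-state-space statement to the atomic case via Nummelin's splitting construction, then extract both existence and uniqueness of the invariant measure from excursion theory of the resulting atom; the concluding ‘‘hence'' is then immediate from the definitions recalled just above the statement.

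\textbf{Existence.} Recurrence entails $\pi$-irreducibility, so on the separable metric space $\cE$ one produces an accessible small set $C$ together with $m\geq 1$, $\varepsilon>0$ and a probability measure $\nu$ satisfying the minorization $K^m(x,\cdot)\geq \varepsilon\,\mathbf{1}_C(x)\,\nu(\cdot)$ (standard, see \cite[Chapter 9]{MeynTweedieMCSto}). Passing to the $m$-step skeleton (or the resolvent chain) and applying the splitting construction yields a chain on $\cE\times\{0,1\}$ possessing a genuine atom $\alpha$, and recurrence of $(Y_n)$ transfers to the split chain, so its return time $\sigma_\alpha=\inf\{n\geq 1:\check Y_n\in\alpha\}$ is almost surely finite when started from $\alpha$. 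One then sets
\[
\mu(\cC)=\mathbb{E}_\alpha\!\left[\sum_{n=0}^{\sigma_\alpha-1}\mathbf{1}_{\{\check Y_n\in\cC\}}\right],
\]
and checks invariance by the first-entrance decomposition of an excursion away from $\alpha$ (a Kac-type argument: conditioning on the first step), while accessibility of $\alpha$ yields $\sigma$-finiteness of $\mu$. Projecting $\mu$ down to $\cE$ gives an invariant measure for $(Y_n)$; in the Harris case one additionally verifies that $\mu$ inherits the almost-sure recurrence, which is routine once the atom is available.

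\textbf{Uniqueness up to a multiplicative constant.} Let $\lambda$ be any invariant measure for $(Y_n)$. Recurrence forces $\lambda$ to be $\sigma$-finite and equivalent to the maximal irreducibility measure; lifting $\lambda$ to the split chain and using that an invariant measure of a recurrent chain possessing an atom must be a multiple of the excursion measure — the minimality inequality $\lambda(\cdot)\geq \lambda(\alpha)\,\mu(\cdot)$ coming from the first-entrance decomposition, an equality by recurrence since the chain returns to $\alpha$ with probability one — one obtains $\lambda=\lambda(\alpha)\,\mu$. Equivalently, and perhaps more transparently, one invokes the ratio limit theorem for recurrent chains: for $x$ outside a null set and for $\cC,\cD$ with $\pi(\cC),\pi(\cD)>0$,
\[
\frac{\lambda(\cC)}{\lambda(\cD)}=\lim_{N\to\infty}\frac{\sum_{k=1}^{N}K^k(x,\cC)}{\sum_{k=1}^{N}K^k(x,\cD)},
\]
whose right-hand side does not depend on $\lambda$; this determines $\lambda$ up to the single scalar $\lambda(\cD)$.

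\textbf{The ‘‘hence'' and the main obstacle.} Given the two previous steps, if some invariant measure happens to be a probability measure then, being proportional to the essentially unique invariant measure $\mu$, it is exactly the normalisation of $\mu$; hence $(Y_n)$ admits an invariant probability measure and is, by the definition recalled just before the statement, positive recurrent — and positive Harris recurrent if it was Harris recurrent to begin with. The one genuinely technical ingredient is the existence half on a non-atomic state space: it is precisely the small-set/minorization input together with the Nummelin splitting that does the work, after which everything reduces to excursion bookkeeping for the atom; uniqueness, by contrast, is a soft consequence of recurrence. Since the result is classical, in the paper it suffices to cite \cite[Proposition 4.2.11]{HernandezLasserre} (together with \cite[Chapter 10]{MeynTweedieMCSto}) and, if desired, to record this sketch.
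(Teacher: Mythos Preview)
Your sketch is correct and follows the standard route (small sets, Nummelin splitting, Kac's excursion formula for existence, first-entrance decomposition or ratio limits for uniqueness). However, the paper does not prove this proposition at all: it is introduced with ``We recall \cite[Proposition 4.2.11]{HernandezLasserre}'' and is simply quoted as a known result, with no argument given. So there is nothing to compare against beyond the citation you yourself flag in your last paragraph; your proposal supplies precisely the content that the paper deliberately outsources to the reference.
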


\begin{remark}[Positive Harris recurrence and invariant probability]\label{rem:psi}
According to ~\cite[Section 4.2.2 and Theorem
10.4.9]{MeynTweedieMCSto}, if $(Y_n)$ is positive Harris recurrent with
invariant probability measure $\pi_0$, then~\eqref{eq:pos_Har_rec}
holds with $\pi=\pi_0$.
\end{remark}

The following assumptions will be of constant use throughout this work. As usual, $C_b( \cE,\R)$ stands for continuous and bounded functions from $ \cE$ to $\R$.

\begin{assumption}
\begin{enumerate}[label={[\Alph{assumption}\arabic*]},leftmargin=*]
\item\label{Ass:K0} $\cA$ and $\cB$ are compact disjoint sets and $ \cE=\cA\cup\cB$.
\item\label{Ass:K1} $K$ is weak-Feller, meaning that $Kf \in C_b( \cE,\R)$ whenever $f \in C_b( \cE,\R)$.
\item\label{Ass:K2} The kernel $K$ is positive Harris recurrent, and $\pi_0$ denotes its unique stationary probability measure.
\item\label{Ass:K3} $\pi_0(\cA)>0$ and $\pi_0(\cB)>0$.
\end{enumerate}
\label{Ass:K}
\end{assumption}

\begin{remark}[Assumptions in the case of a diffusion]\label{rem:diff1}
Returning to the setting of Section~\ref{sec:motiv}, one can exhibit
conditions on the diffusion $(X_t)$ solution to~\eqref{Eq:SDE} so that
Assumption~\ref{Ass:K} is satisfied, see Section~\ref{sdlkcakc}.
\end{remark}

\paragraph{Norms and operators.}
For any test function $f:  \cE \rightarrow \R$, the supremum norm is as usual $\norm{f}_{\infty}: = \sup_{x \in  \cE}{\abs{f(x)}}$.
The total variation norm of a finite signed measure $\mu$ on $( \cE,\mathscr{B}( \cE))$ is defined by $\|\mu\|:=\sup_{\|f\|_\infty\leq 1}\mu f$. Accordingly, given two  finite signed measures $\pi$ and $\nu$, the total variation distance between $\pi$ and $\nu$ is 
\begin{equation}
\norm{\pi-\nu }= \sup_{\|f\|_\infty\leq 1}(\pi f-\nu f).
\label{Eq:DefTV}
\end{equation}
Beware that a classic convention in probability is to define the  total variation distance between two probability measures as half of the latter quantity. In the sequel, the operator norm of a finite kernel $K$ acting on test functions is denoted by $\norm{\cdot}_{\infty}$, i.e.,
\begin{align}
\norm{K}_{\infty}
&= \supSur{\norm{f}_{\infty}  \leq 1} {\norm{Kf}_{\infty}}= \supSur{x \in  \cE}{ \supSur{\norm{f}_{\infty}  \leq 1} { \abs{Kf(x)}}}= \supSur{x \in  \cE}{ \norm{ K(x,\cdot) }}
  .
\label{Eq:DefNorm}
\end{align}
Finally, for any $\cC \in \mathscr{B}( \cE)$, $\id_{\cC} $ is the identity
operator on $\cC$, that is 
$$\pi \id_{\cC} f = \pi f, \qquad \forall \text{$\pi$ measure on }
\cC, \,
\forall f: \cC \to \R \text{ bounded measurable,}
$$
and $\mathds{1}_\cC : \cC \to \R$ denotes the function defined on
$\cC$ and identically equal to one on $\cC$.

\paragraph{Markov and sub-Markov kernels.}
For what follows, we
need to consider some restrictions of the Markov kernel $K$ to
subsets of $ \cE$.
Specifically, for any $\cC, \cD \in \mathscr{B}(  \cE)$, we introduce
the nonnegative sub-Markov kernel $K_{\cC \cD}$ defined for all $x \in \cC $ and all $D \in \mathscr{B}(\cD)$ by
\begin{equation}
K_{\cC \cD}(x,D) = \int_{D} K(x,dy)  .
\end{equation}
In other words, for any probability $\pi$ on $\cC$ and any test function $f$ on $\cD$, one simply  has $\pi K_{\cC \cD}f=\pi Kf$. If $\cD=\cC$, we just write $K_{\cC}$.
Using this notation, the transition kernel $K$ can be decomposed as a two-block kernel on $ \cE = \cA \cup \cB$ as
\begin{equation}
K=
\begin{bmatrix}
 K_{ \cA } & K_{ \cA \cB}\\ 
 K_{\cB \cA} & K_{\cB }
 \end{bmatrix} .
\label{EqTwoBlocks}
\end{equation}
Note that for all $x \in \cA$ and all $n  \geq 0$,
\begin{equation}
\probin{x}{\cT_{\cB} >n} = \probin{x}{Y_1 \in \cA ,  \dots, Y_n \in \cA}= K_{\cA}^n\mathds{1}_{\cA}(x) .\label{EqProbinGE}
\end{equation}
Besides, for all $x \in \cA$, it is readily seen that
\begin{equation}
K_{\cA} \mathds{1}_{\cA} (x) + K_{ \cA \cB} \mathds{1}_{\cB}(x) = \mathds{1}_{\cA}(x) ,
\end{equation}
which amounts to saying that
\begin{equation}
(\id_{\cA}-K_{\cA})\mathds{1}_{\cA} = K_{\cA \cB } \mathds{1}_{\cB} .
\label{Eq:PropKAOverOne}
\end{equation}
Accordingly, we also have, for any probability distribution $\pi$ on $\cA$,
\begin{equation}
\probin{\pi}{Y_1 \in \cB} = \pi K_{\cA \cB } \mathds{1}_{\cB} .
\label{alzj}
\end{equation}

\paragraph{Preliminary results on the Markov chain $(Y_n)$.} We give here some important consequences of our assumptions.

\begin{lemma} 
\label{Lemma:}
Under~\Cref{Ass:K}, 
\begin{equation}
\supSur{x\in \cA}{\probin{x}{Y_1 \in \cB}}>0 \qquad \mathrm{and} \qquad \supSur{x\in \cB}{\probin{x}{Y_1 \in \cA}}>0 .
\label{Eq:Accessibility}
\end{equation}
Moreover, there exists an integer $n$ such that
\begin{equation}
\inf_{x \in \cA} \probin{x}{\cT_{\cB}  \leq n} > 0 \qquad \mathrm{and }\qquad \inf_{x \in \cB} \probin{x}{\cT_{\cA}  \leq n} > 0 .
\label{Eq:HittingTimes}
\end{equation}
\end{lemma}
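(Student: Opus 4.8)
The plan is to derive both statements from positive Harris recurrence (Assumption~\ref{Ass:K2}) together with the non-degeneracy condition $\pi_0(\cA)>0$, $\pi_0(\cB)>0$ (Assumption~\ref{Ass:K3}) and compactness of $\cA$, $\cB$ (Assumption~\ref{Ass:K0}); the weak-Feller property (Assumption~\ref{Ass:K1}) will be used to turn pointwise positivity into a uniform lower bound via a compactness argument.

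First I would establish~\eqref{Eq:Accessibility}. Suppose for contradiction that $\probin{x}{Y_1\in\cB}=0$ for every $x\in\cA$, i.e.\ $K(x,\cB)=0$ for all $x\in\cA$, equivalently $K_{\cA}\mathds{1}_{\cA}=\mathds{1}_{\cA}$. Then starting from any $x\in\cA$ the chain never leaves $\cA$, so by~\eqref{EqProbinGE} one has $\probin{x}{\cT_{\cB}=\infty}=1$ for all $x\in\cA$. This contradicts positive Harris recurrence: by Remark~\ref{rem:psi}, \eqref{eq:pos_Har_rec} holds with $\pi=\pi_0$, and since $\pi_0(\cB)>0$ the chain must visit $\cB$ (in fact infinitely often, almost surely) from every starting point. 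Hence $\sup_{x\in\cA}\probin{x}{Y_1\in\cB}>0$. The symmetric argument with the roles of $\cA$ and $\cB$ exchanged, using $\pi_0(\cA)>0$, gives $\sup_{x\in\cB}\probin{x}{Y_1\in\cA}>0$.

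Next I would prove~\eqref{Eq:HittingTimes}. Fix $x_0\in\cA$. Positive Harris recurrence (again via Remark~\ref{rem:psi} and $\pi_0(\cB)>0$) gives $\probin{x_0}{\cT_{\cB}<\infty}=1$, so by monotone convergence there is an integer $n=n(x_0)$ with $\probin{x_0}{\cT_{\cB}\leq n}>0$. The map $x\mapsto \probin{x}{\cT_{\cB}\leq n} = 1 - K_{\cA}^n\mathds{1}_{\cA}(x)$ is, by~\eqref{EqProbinGE} and the weak-Feller property, continuous on $\cA$ (note $K_\cA^n\mathds{1}_\cA = K^n(\,\cdot\,,\cA)$ restricted to $\cA$, and iterating Assumption~\ref{Ass:K1} shows this is continuous and bounded); hence it is strictly positive on a neighbourhood of $x_0$. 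By compactness of $\cA$, finitely many such neighbourhoods (with associated integers $n_1,\dots,n_k$) cover $\cA$; taking $n^\ast=\max_i n_i$ and using that $x\mapsto\probin{x}{\cT_{\cB}\leq m}$ is nondecreasing in $m$, we get $\inf_{x\in\cA}\probin{x}{\cT_{\cB}\leq n^\ast}>0$. The same reasoning on $\cB$ produces an integer for which $\inf_{x\in\cB}\probin{x}{\cT_{\cA}\leq\cdot}>0$, and taking the maximum of the two integers yields a single $n$ working for both, as stated.

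The main obstacle I anticipate is the continuity step: one must check carefully that $x\mapsto K_{\cA}^n\mathds{1}_{\cA}(x)$ is continuous on $\cA$. The function $\mathds{1}_{\cA}$ is not continuous on $\cE=\cA\cup\cB$, but since $\cA$ and $\cB$ are disjoint compact sets, $\mathds{1}_{\cA}$ coincides with a function in $C_b(\cE,\R)$ (any continuous extension separating the two compacts), so Assumption~\ref{Ass:K1} applies and, by induction, $K^j(\,\cdot\,,\cA)\in C_b(\cE,\R)$ for every $j$; restricting to $\cA$ gives what we need. One should also be slightly careful that Remark~\ref{rem:psi} is invoked correctly — it guarantees $\sum_n \mathds{1}_{\{Y_n\in\cB\}}=\infty$ almost surely under $\probin{x}{\cdot}$ for every $x$, which in particular forces $\probin{x}{\cT_{\cB}<\infty}=1$ — and that positivity of $\pi_0$ on $\cB$ is exactly what makes $\cB$ an admissible set in~\eqref{eq:pos_Har_rec}. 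Beyond these points the argument is routine.
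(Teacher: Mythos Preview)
Your approach is essentially the paper's: both use Harris recurrence together with $\pi_0(\cB)>0$ for pointwise positivity of the hitting probabilities, the weak-Feller property for continuity of $x\mapsto \probin{x}{\cT_\cB\le n}$, and compactness of $\cA$ to upgrade to a uniform bound. The paper argues by sequential compactness and contradiction (extract a convergent subsequence along which the probability vanishes, pass to the limit by continuity, contradict irreducibility at the limit point), whereas you use an open-cover argument; this is a cosmetic difference.

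There is, however, a slip in your continuity justification. The identity $K_\cA^n\mathds{1}_\cA = K^n(\,\cdot\,,\cA)|_\cA$ is false for $n\ge 2$: the left-hand side is $\probin{x}{\cT_\cB>n}=\probin{x}{Y_1\in\cA,\dots,Y_n\in\cA}$, while the right-hand side is $\probin{x}{Y_n\in\cA}$. So ``restricting $K^j(\,\cdot\,,\cA)$ to $\cA$'' does not give what you need. The correct induction is to set $g_n\in B(\cE,\R)$ by $g_n|_\cA = K_\cA^n\mathds{1}_\cA$ and $g_n|_\cB=0$; then $g_0=\mathds{1}_\cA\in C_b(\cE,\R)$ (disjoint compacts), and for $x\in\cA$ one has $K_\cA^{n+1}\mathds{1}_\cA(x)=\int_\cE K(x,dy)g_n(y)=Kg_n(x)$, so $g_{n+1}|_\cA=(Kg_n)|_\cA$ is continuous by weak-Feller, and $g_{n+1}\in C_b(\cE,\R)$ again by the disjoint-compact structure. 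With this fix your argument goes through; you should also make explicit the final step that a continuous strictly positive function on the compact set $\cA$ attains its (strictly positive) infimum.
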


A simple but crucial consequence of this result is the well-posedness of Poisson equations associated with
  $K_\cA$ and $K_\cB$. By convention, an empty sum is equal to zero. Recall that the space $B(\cA,\R)$ of test (i.e., bounded and measurable) functions equipped with the supremum norm is a Banach space. 
    
\begin{corollary}
\label{Cor:Invert}
Under~\Cref{Ass:K}, 
the operator $(\id_{\cA}-K_{\cA})$ is
invertible in the following sense: for any test function
$g$ on $\cA$, the unique test function solution of the Poisson boundary value problem
\begin{equation}\label{EqNonDirichletBoundaryValueProblem}
\left\{
\begin{aligned}
(\id_{\cA} - K_{\cA}) r(x) &= g(x), & x& \in \cA
 \\
r (x) &= 0, & x &\in \cB  
\end{aligned}
\right.
\end{equation}
is given by
\begin{equation}
r(x)=\Biggexpecin{x}{ \sum_{n=0}^{\cT_{\cB}-1} g(Y_n) }.\label{EqSolutionNonDirichletBoundaryValueProblem}
\end{equation}
Mutatis mutandis, the same
result holds for the operator $(\id_{\cB}-K_{\cB})$.
\end{corollary}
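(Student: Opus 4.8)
The plan is to realise $(\id_\cA-K_\cA)^{-1}$ as a norm-convergent Neumann series on the Banach space $B(\cA,\R)$ of test functions on $\cA$, to read off uniqueness of the solution of \eqref{EqNonDirichletBoundaryValueProblem} from invertibility, and finally to identify the series $\sum_{k\ge 0}K_\cA^k g$ with the probabilistic formula \eqref{EqSolutionNonDirichletBoundaryValueProblem}.

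First I would use that $K_\cA$ is a nonnegative sub-Markov kernel: for any power $P=K_\cA^m$ and any $f$ with $\norm{f}_\infty\le 1$ one has $\abs{Pf}\le P\mathds{1}_\cA$ pointwise, so $\norm{K_\cA^m}_\infty=\norm{K_\cA^m\mathds{1}_\cA}_\infty=\sup_{x\in\cA}\probin{x}{\cT_\cB>m}$ by \eqref{EqProbinGE}; in particular $\norm{K_\cA^i}_\infty\le 1$ for every $i$. \Cref{Lemma:} provides an integer $n$ with $\alpha:=\inf_{x\in\cA}\probin{x}{\cT_\cB\le n}>0$, hence $\norm{K_\cA^n}_\infty\le 1-\alpha<1$. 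Writing $k=jn+i$ with $0\le i<n$ gives $\norm{K_\cA^k}_\infty\le(1-\alpha)^j$, so $\sum_{k\ge 0}\norm{K_\cA^k}_\infty\le n/\alpha<\infty$ and $S:=\sum_{k\ge 0}K_\cA^k$ converges in operator norm. The telescoping identity $(\id_\cA-K_\cA)\sum_{k=0}^{N}K_\cA^k=\id_\cA-K_\cA^{N+1}$, combined with $\norm{K_\cA^{N+1}}_\infty\to 0$, then shows that $\id_\cA-K_\cA$ is invertible on $B(\cA,\R)$ with bounded inverse $S$. As a byproduct, \eqref{EqProbinGE} together with $\norm{K_\cA^{jn}}_\infty\le(1-\alpha)^j$ yields the geometric tail bound $\probin{x}{\cT_\cB>jn}\le(1-\alpha)^j$, hence $\expecin{x}{\cT_\cB}\le n/\alpha$ uniformly in $x\in\cA$.

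Next I would treat the boundary value problem. Since $K_\cA r(x)=\int_\cA K(x,dy)\,r(y)$ depends only on $r|_\cA$, \eqref{EqNonDirichletBoundaryValueProblem} is equivalent to $(\id_\cA-K_\cA)(r|_\cA)=g$ together with $r|_\cB\equiv 0$; by the previous step this forces $r|_\cA=Sg=\sum_{k\ge 0}K_\cA^k g$, which gives uniqueness (and $Sg$ is a genuine test function since $\norm{S}_\infty<\infty$). For existence it remains to identify $Sg$: unfolding the definition of $K_\cA$, for $x\in\cA$ and $k\ge 0$ one has $K_\cA^k g(x)=\expecin{x}{g(Y_k)\mathds{1}_{\{Y_1\in\cA,\dots,Y_k\in\cA\}}}=\expecin{x}{g(Y_k)\mathds{1}_{\{\cT_\cB>k\}}}$, using $Y_0=x\in\cA$ and $\cA\cap\cB=\varnothing$. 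Summing over $k$ and interchanging sum and expectation --- legitimate because $\sum_{k\ge 0}\abs{g(Y_k)}\mathds{1}_{\{\cT_\cB>k\}}\le\norm{g}_\infty\,\cT_\cB$, which is integrable by the tail bound above --- gives $Sg(x)=\expecin{x}{\sum_{k=0}^{\cT_\cB-1}g(Y_k)}$, i.e.\ \eqref{EqSolutionNonDirichletBoundaryValueProblem}. The claim for $(\id_\cB-K_\cB)$ follows by the same argument, using the statements of \Cref{Lemma:} relative to $\cB$. The only nontrivial input is the uniform bound $\norm{K_\cA^n}_\infty<1$, i.e.\ \Cref{Lemma:} (hence \Cref{Ass:K}); everything else is routine Neumann-series and Fubini bookkeeping, so I do not anticipate a genuine obstacle.
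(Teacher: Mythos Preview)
Your argument is correct and is essentially the same as the paper's: invoke \Cref{Lemma:} to get $\norm{K_\cA^n}_\infty<1$, sum the Neumann series on $B(\cA,\R)$, and identify $\sum_{k\ge 0}K_\cA^k g$ probabilistically via \eqref{EqProbinGE} and Fubini. The only cosmetic difference is that you justify the interchange via the pathwise bound $\norm{g}_\infty\,\cT_\cB$ and integrability of $\cT_\cB$, whereas the paper bounds term by term via $\sum_k\norm{K_\cA^k}_\infty<\infty$; both are immediate from the same estimate.
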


Hence, for any test function $g$ and all $x\in\cA$, one has
\begin{equation}
(\id_{\cA}- K_{\cA})^{-1} g(x)=\Biggexpecin{x}{ \sum_{n=0}^{\cT_{\cB}-1} g(Y_n) },
\label{lakcnlazknclc}
\end{equation}
or more generally, for any probability measure $\pi$ on $\cA$,
\begin{equation}
\pi(\id_{\cA}- K_{\cA})^{-1} g=\Biggexpecin{\pi}{ \sum_{n=0}^{\cT_{\cB}-1} g(Y_n) }.
\label{lakcnlazknclckajcb}
\end{equation}
 Taking $g=\mathds{1}_{\cA}$ yields the next result.
\begin{corollary}\label{Cor:PoissonCor}
For all $x\in \cA$,
\begin{equation}
\expecin{x}{\cT_{\cB}}=(\id_{\cA}-K_{\cA})^{-1}\mathds{1}_{\cA}(x)  .
\label{Eq:MRT}
\end{equation}
And similarly, for all $x\in \cB$, $\expecin{x}{\cT_{\cA}}=(\id_{\cB}-K_{\cB})^{-1}\mathds{1}_{\cB}(x)$.
\end{corollary}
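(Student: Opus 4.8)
The plan is to obtain \eqref{Eq:MRT} as an immediate specialization of \Cref{Cor:Invert}, or equivalently of \eqref{lakcnlazknclc}, to the constant test function $g=\mathds{1}_{\cA}$, together with the elementary identity $\sum_{n=0}^{\cT_{\cB}-1}\mathds{1}_{\cA}(Y_n)=\cT_{\cB}$.

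First I would record the well-definedness of the quantities involved. By \Cref{Lemma:}, there is an integer $n_0$ with $c:=\inf_{x\in\cA}\probin{x}{\cT_{\cB}\leq n_0}>0$; iterating the Markov property over successive blocks of length $n_0$ gives the geometric tail bound $\probin{x}{\cT_{\cB}>kn_0}\leq(1-c)^k$ uniformly in $x\in\cA$, so that $\cT_{\cB}<\infty$ almost surely and $x\mapsto\expecin{x}{\cT_{\cB}}$ is a bounded function on $\cA$. In particular, the right-hand side of \eqref{EqSolutionNonDirichletBoundaryValueProblem} with $g=\mathds{1}_{\cA}$ is a genuine test function, so \Cref{Cor:Invert} is applicable with this choice of $g$.

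Then the computation is one line. For every integer $n$ with $0\leq n\leq\cT_{\cB}-1$ we have $Y_n\notin\cB$ by definition of $\cT_{\cB}$, hence $Y_n\in\cA$ since $\cE=\cA\cup\cB$, so $\mathds{1}_{\cA}(Y_n)=1$; summing over $n=0,\dots,\cT_{\cB}-1$ gives $\sum_{n=0}^{\cT_{\cB}-1}\mathds{1}_{\cA}(Y_n)=\cT_{\cB}$. Taking expectations under $\probin{x}{\cdot}$ and invoking \eqref{lakcnlazknclc} (that is, \Cref{Cor:Invert}) with $g=\mathds{1}_{\cA}$ yields $(\id_{\cA}-K_{\cA})^{-1}\mathds{1}_{\cA}(x)=\Biggexpecin{x}{\sum_{n=0}^{\cT_{\cB}-1}\mathds{1}_{\cA}(Y_n)}=\expecin{x}{\cT_{\cB}}$ for all $x\in\cA$, which is \eqref{Eq:MRT}. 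The statement for $\cB$ follows by exchanging the roles of $\cA$ and $\cB$ throughout, which is legitimate because \Cref{Ass:K} and the conclusions of \Cref{Lemma:} and \Cref{Cor:Invert} are symmetric in $\cA$ and $\cB$.

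There is essentially no obstacle in this argument: the only point that deserves care is the well-definedness mentioned above (almost-sure finiteness of $\cT_{\cB}$ and boundedness of $x\mapsto\expecin{x}{\cT_{\cB}}$), and that is precisely what \Cref{Lemma:} provides and what already underlies the proof of \Cref{Cor:Invert}.
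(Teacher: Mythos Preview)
Your proof is correct and follows exactly the paper's approach: the paper simply states that taking $g=\mathds{1}_{\cA}$ in \Cref{Cor:Invert} (equivalently in \eqref{lakcnlazknclc}) yields the result. Your additional remarks on well-definedness are already implicit in the proof of \Cref{Cor:Invert}, so the argument is the same in substance.
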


\paragraph{Restricted and conditional stationary measures.}
Given a
probability measure~$\pi$ on~$\cE$ and a Borel set $\cC \in
\mathscr{B}( \cE)$ such that $\pi(\cC)>0$, we denote by $\pi^{\cC}$ the measure restricted to $\cC$:
\begin{equation}
\pi^{\cC} (\cD) = \pi ( \cD \cap \cC), \quad \forall \cD \in \mathscr{B}( \cE),
\label{Eq:RestrictedMeasure}
\end{equation}
and by $\pi_{|\cC}$ the measure conditioned to $\cC$:
\begin{equation}
\pi_{|\cC} (\cD) = \frac{\pi^{\cC} (\cD)}{\pi(\cC)} = \frac{\pi ( \cD \cap \cC)}{\pi(\cC)}, \quad \forall \cD \in \mathscr{B}( \cE) .
\label{Eq:ConditionedMeasure}
\end{equation}
In the sequel, we will need various relations involving these restricted and conditional measures. Recall that, under~\Cref{Ass:K},  $\pi_0$ denotes the unique stationary probability measure of $(Y_n)$.

\begin{proposition}\label{prop:pi0}
Under~\Cref{Ass:K}, we have
\begin{equation}
\pi_0^{\cA} (\id_{\cA}- K_{\cA}) = \pi_0^{\cB} K_{\cB \cA}
\label{Eq:StatPi0}
\end{equation}
as well as
\begin{equation}
\pi_{0 | \mathcal A} = \pi_{0 | \mathcal A} K_{\cA} + \pi_{0 | \mathcal A} K_{ \cA \cB} (\id_{\cB}-K_{\cB})^{-1} K_{\cB \cA}.
\label{Eq:pi0trace}
\end{equation}
\end{proposition}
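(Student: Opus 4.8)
The plan is to read off everything from the single identity $\pi_0 K = \pi_0$, valid because $\pi_0$ is the stationary probability measure of $(Y_n)$ under \Cref{Ass:K}, and from the invertibility of $\id_{\cB}-K_{\cB}$ granted by \Cref{Cor:Invert}. First I would split the stationarity relation along the two-block decomposition \eqref{EqTwoBlocks}. Using $\cE=\cA\cupdot\cB$ (disjointness is \ref{Ass:K0}), every probability measure decomposes as $\pi_0=\pi_0^{\cA}+\pi_0^{\cB}$ and every test function $f$ on $\cE$ as its restrictions $f|_\cA,f|_\cB$; testing $\pi_0 Kf=\pi_0 f$ first against functions $f$ supported in $\cA$ and then against functions supported in $\cB$ yields the two block identities
\begin{equation}
\pi_0^{\cA} K_{\cA} + \pi_0^{\cB} K_{\cB\cA} = \pi_0^{\cA}, \qquad \pi_0^{\cA} K_{\cA\cB} + \pi_0^{\cB} K_{\cB} = \pi_0^{\cB},
\end{equation}
the first being an equality of finite measures on $\cA$, the second on $\cB$. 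Rearranging the first one is exactly \eqref{Eq:StatPi0}, namely $\pi_0^{\cA}(\id_{\cA}-K_{\cA})=\pi_0^{\cB}K_{\cB\cA}$.

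For \eqref{Eq:pi0trace} I would rewrite the second block identity as $\pi_0^{\cB}(\id_{\cB}-K_{\cB})=\pi_0^{\cA}K_{\cA\cB}$ and then invert $\id_{\cB}-K_{\cB}$. Concretely, by \Cref{Cor:Invert} applied with $\cB$ in the role of $\cA$, for any test function $g$ on $\cB$ the function $h:=(\id_{\cB}-K_{\cB})^{-1}g$ is again a test function and satisfies $(\id_{\cB}-K_{\cB})h=g$; substituting gives $\pi_0^{\cB} g=\pi_0^{\cB}(\id_{\cB}-K_{\cB})h=\pi_0^{\cA}K_{\cA\cB}h=\pi_0^{\cA}K_{\cA\cB}(\id_{\cB}-K_{\cB})^{-1}g$, i.e. $\pi_0^{\cB}=\pi_0^{\cA}K_{\cA\cB}(\id_{\cB}-K_{\cB})^{-1}$ as measures on $\cB$. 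Plugging this into \eqref{Eq:StatPi0} yields $\pi_0^{\cA}(\id_{\cA}-K_{\cA})=\pi_0^{\cA}K_{\cA\cB}(\id_{\cB}-K_{\cB})^{-1}K_{\cB\cA}$, and finally dividing through by $\pi_0(\cA)>0$ (positive by \ref{Ass:K3}) turns $\pi_0^{\cA}$ into $\pi_{0|\mathcal A}$ and gives $\pi_{0|\mathcal A}(\id_{\cA}-K_{\cA})=\pi_{0|\mathcal A}K_{\cA\cB}(\id_{\cB}-K_{\cB})^{-1}K_{\cB\cA}$, which is \eqref{Eq:pi0trace} after moving $\pi_{0|\mathcal A}K_{\cA}$ to the other side.

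The argument is purely algebraic: no estimates or limits are needed. The only points requiring care are bookkeeping ones — keeping consistent track of which objects live on $\cE$, on $\cA$, or on $\cB$ when decomposing $\pi_0 K=\pi_0$ into blocks, and checking that the dual action of $(\id_{\cB}-K_{\cB})^{-1}$ on the finite measure $\pi_0^{\cB}$ is legitimate, which is precisely guaranteed by the fact that \Cref{Cor:Invert} produces a \emph{bounded measurable} solution of the Poisson problem. I expect this last verification (that the operator-level inverse transfers correctly to the measure identity) to be the only genuinely delicate step, though it is immediate once the corollary is invoked.
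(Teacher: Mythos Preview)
Your proof is correct and follows essentially the same route as the paper: split $\pi_0 K=\pi_0$ along the two-block decomposition \eqref{EqTwoBlocks} to get \eqref{Eq:StatPi0} and its $\cB$-analogue, invert $\id_{\cB}-K_{\cB}$ via \Cref{Cor:Invert} to express $\pi_0^{\cB}$, substitute, and divide by $\pi_0(\cA)>0$. If anything, you are more explicit than the paper about why the measure-level inversion is legitimate.
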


\subsection{The reactive entrance process}\label{sec:reactiveproc}
Under \Cref{Ass:K}, the Markov chain $(Y_n)$ is positive Harris recurrent, hence it will visit the
sets~$\cA$ and~$\cB$ infinitely often. 
Remind that we are interested in the
transitions of the Markov chain from $\cA$ to~$\cB$ at equilibrium. In this aim, we first introduce the reactive entrance process.

\paragraph{Definition of the reactive entrance process.}
From the Markov chain $(Y_n)$ living on $ \cE = \cA \cup \cB$, one can also define a process in $\cA$ monitored only when visiting $\cA$ after having reached $\cB$. We call it the reactive entrance process in $\cA$ and denote it by $(Y^\mathrm{E}_n)$. 
We recall that $\cT^+_{ \cB}= \inf\setsuch{n  \geq 1}{Y_n \in \cB}$ and define 
\begin{equation}
\cT^{\cA} = \inf\set{n > \cT^{+}_{\cB}, Y_n \in \cA} ,
\end{equation}
 which is the hitting time of $\cA$ after having reached $\cB$. 


\begin{definition}[Reactive entrance process]
\label{Def:REProcess}
The reactive entrance process associated with the process $(Y_n)$ is the Markov chain $(Y^\mathrm{E}_n)$ on $ \cA$ with transition kernel 
\begin{equation}
K^{\mathrm{E}}(x,\cC) = \probin{x}{Y_{\cT^{\cA}} \in \cC} \qquad \forall x \in \cA, \forall \cC \in \mathscr{B}(\cA),
\end{equation}
or, equivalently,
\begin{equation}
K^{\mathrm{E}} = (\id_{\cA}-K_{\cA})^{-1}K_{\cA\cB}(\id_{\cB}-K_{\cB})^{-1}K_{\cB \cA} .
\end{equation}
\end{definition}

The justification of the second formula is given in \Cref{SSec:Proofs}.


\paragraph{Some properties of the reactive entrance process.}

Recall that $\pi_0$ is the unique invariant probability measure of $(Y_n)$ and, $\forall \cD \in \mathscr{B}( \cE)$, 
$$\pi_{0 | \mathcal A}(\cD)= \frac{\pi_0 ( \cD \cap \cA)}{\pi_0(\cA)}=\frac{\pi_0^{\cA} (\cD)}{\pi_0(\cA)}.$$

\begin{proposition}\label{Prop:ReactiveEntranceHarris}
Under Assumption~\ref{Ass:K}, the reactive entrance process  $(Y^\mathrm{E}_n)$ is positive Harris recurrent with unique invariant probability measure 
\begin{equation}\label{EqLinkDist}
 \nu_{\mathrm{E}}:= \frac{ \pi_{0 | \mathcal A} (\id_{\cA}-K_{ \cA}) }{\probin{ \pi_{0 | \mathcal A}}{Y_1 \in \cB}} .
\end{equation}
Equivalently, we have
\begin{equation}
\pi_{0 | \mathcal A}= \frac{ \nu_{\mathrm{E}} (\id_{\cA}-K_{\cA})^{-1}}
{\expecin{\nu_{\mathrm{E}}}{\cT_{\cB}}}.
\label{kajcpcj}
\end{equation}
\end{proposition}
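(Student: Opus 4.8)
The plan is to establish \eqref{EqLinkDist} by identifying $\nu_{\mathrm{E}}$ as a stationary probability measure for the kernel $K^{\mathrm{E}}$ and then invoking uniqueness via positive Harris recurrence; once \eqref{EqLinkDist} is proved, \eqref{kajcpcj} will follow by a purely algebraic inversion. First I would check that $K^{\mathrm{E}}$ is a genuine Markov kernel on $\cA$: for $x\in\cA$, $K^{\mathrm{E}}(x,\cA)=\probin{x}{Y_{\cT^{\cA}}\in\cA}=1$, using that $(Y_n)$ is positive Harris recurrent (so $\cT^{\cA}<\infty$ a.s.) and that $Y_{\cT^\cA}\in\cA$ by definition of $\cT^\cA$. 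For Harris recurrence of $(Y^\mathrm{E}_n)$, I would transfer the Harris property of $(Y_n)$ through the stopping-time construction: the excursions of $(Y_n)$ between successive visits to $\cA$-after-$\cB$ partition the whole trajectory, so any set $\cC\subset\cA$ with $\nu_{\mathrm{E}}(\cC)>0$ (equivalently, with positive $\pi_{0|\cA}$-related weight, to be checked) is visited infinitely often by $(Y^\mathrm{E}_n)$ because it is visited infinitely often by $(Y_n)$. Here \Cref{Lemma:} (the bound $\inf_{x\in\cB}\probin{x}{\cT_\cA\le n}>0$, and its $\cA\to\cB$ counterpart) guarantees the excursions are a.s. finite and the induced chain does not get stuck.

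The core computation is the stationarity identity $\nu_{\mathrm{E}} K^{\mathrm{E}}=\nu_{\mathrm{E}}$ together with $\nu_{\mathrm{E}}(\cA)=1$. For normalization, apply \eqref{Eq:PropKAOverOne}: $\pi_{0|\cA}(\id_{\cA}-K_{\cA})\mathds{1}_{\cA}=\pi_{0|\cA}K_{\cA\cB}\mathds{1}_{\cB}=\probin{\pi_{0|\cA}}{Y_1\in\cB}$ by \eqref{alzj}, so the right-hand side of \eqref{EqLinkDist} has total mass one; positivity of the denominator comes from \Cref{Lemma:} combined with $\pi_{0|\cA}$ having full support on $\cA$ in the relevant sense. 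For invariance, I would start from \Cref{prop:pi0}, specifically \eqref{Eq:pi0trace}:
\begin{equation*}
\pi_{0|\cA}=\pi_{0|\cA}K_{\cA}+\pi_{0|\cA}K_{\cA\cB}(\id_{\cB}-K_{\cB})^{-1}K_{\cB\cA},
\end{equation*}
which rearranges to $\pi_{0|\cA}(\id_{\cA}-K_{\cA})=\pi_{0|\cA}K_{\cA\cB}(\id_{\cB}-K_{\cB})^{-1}K_{\cB\cA}$. Now apply $(\id_{\cA}-K_{\cA})^{-1}$ on the right of the definition $K^{\mathrm{E}}=(\id_{\cA}-K_{\cA})^{-1}K_{\cA\cB}(\id_{\cB}-K_{\cB})^{-1}K_{\cB\cA}$: multiplying the measure $\nu_{\mathrm{E}}\propto\pi_{0|\cA}(\id_{\cA}-K_{\cA})$ by $K^{\mathrm{E}}$ gives
\begin{equation*}
\pi_{0|\cA}(\id_{\cA}-K_{\cA})\,(\id_{\cA}-K_{\cA})^{-1}K_{\cA\cB}(\id_{\cB}-K_{\cB})^{-1}K_{\cB\cA}=\pi_{0|\cA}K_{\cA\cB}(\id_{\cB}-K_{\cB})^{-1}K_{\cB\cA}=\pi_{0|\cA}(\id_{\cA}-K_{\cA}),
\end{equation*}
i.e. $\nu_{\mathrm{E}}K^{\mathrm{E}}=\nu_{\mathrm{E}}$ after dividing by the normalizing constant. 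Since $(Y^\mathrm{E}_n)$ is positive Harris recurrent, \Cref{oaihzdozihd} gives uniqueness of the invariant probability measure, so $\nu_{\mathrm{E}}$ is it.

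Finally, \eqref{kajcpcj} is obtained by solving \eqref{EqLinkDist} for $\pi_{0|\cA}$: write $\nu_{\mathrm{E}}\,\probin{\pi_{0|\cA}}{Y_1\in\cB}=\pi_{0|\cA}(\id_{\cA}-K_{\cA})$, apply $(\id_{\cA}-K_{\cA})^{-1}$ on the right to get $\nu_{\mathrm{E}}(\id_{\cA}-K_{\cA})^{-1}\,\probin{\pi_{0|\cA}}{Y_1\in\cB}=\pi_{0|\cA}$, and identify the scalar $\probin{\pi_{0|\cA}}{Y_1\in\cB}^{-1}$ with $\expecin{\nu_{\mathrm{E}}}{\cT_{\cB}}$: indeed by \eqref{lakcnlazknclckajcb} with $g=\mathds{1}_{\cA}$ (equivalently \Cref{Cor:PoissonCor}), $\expecin{\nu_{\mathrm{E}}}{\cT_{\cB}}=\nu_{\mathrm{E}}(\id_{\cA}-K_{\cA})^{-1}\mathds{1}_{\cA}$, and applying $\mathds{1}_{\cA}$ to the previous display together with $\pi_{0|\cA}\mathds{1}_{\cA}=1$ shows $\probin{\pi_{0|\cA}}{Y_1\in\cB}\cdot\expecin{\nu_{\mathrm{E}}}{\cT_{\cB}}=1$, which is exactly the asserted reciprocity.

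The main obstacle I anticipate is not the algebra — which is essentially forced once \Cref{prop:pi0} is in hand — but the careful verification that the reactive entrance process is positive Harris recurrent with a well-defined invariant probability measure: one must argue that $\cT^{\cA}<\infty$ a.s. from every starting point in $\cA$ (so $K^{\mathrm{E}}$ is stochastic), that the excursion decomposition faithfully transports $\psi$-irreducibility and Harris recurrence from $(Y_n)$ to $(Y^\mathrm{E}_n)$, and that the candidate measure $\nu_{\mathrm{E}}$ is genuinely a probability measure (mass one and finite), all of which lean on \Cref{Lemma:} and \Cref{Ass:K}. A secondary technical point is justifying the manipulations with $(\id_{\cB}-K_{\cB})^{-1}$ and $(\id_{\cA}-K_{\cA})^{-1}$ as bounded operators on test functions and their adjoint action on measures, which is exactly what \Cref{Cor:Invert} provides.
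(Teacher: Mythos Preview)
Your algebraic computations---verifying that $\pi_{0|\cA}(\id_\cA-K_\cA)$ is $K^{\mathrm{E}}$-invariant via \eqref{Eq:pi0trace}, normalizing via \eqref{Eq:PropKAOverOne}, and deriving \eqref{kajcpcj} by applying $(\id_\cA-K_\cA)^{-1}$ and \Cref{Cor:PoissonCor}---are exactly what the paper does, and are correct.

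The gap is in the Harris recurrence step, which you rightly flag as the main obstacle but do not resolve. The claim ``$\cC$ is visited infinitely often by $(Y^{\mathrm{E}}_n)$ because it is visited infinitely often by $(Y_n)$'' does not follow: $(Y_n)$ might hit $\cC\subset\cA$ infinitely often while always being in the middle of a sojourn in $\cA$, never as the \emph{first} step into $\cA$ after a visit to $\cB$. What you need is that the pair $(Y_n,Y_{n+1})$ lies in $\cB\times\cC$ infinitely often. The paper supplies exactly this missing device: it first proves that the lifted chain $Z_n=(Y_n,Y_{n+1})$ on $\cE\times\cE$ is Harris recurrent with respect to $\pi_0\otimes K$ (by a set-slicing argument: for $\overline{\cC}\subset\cE\times\cE$ with $\pi_0\otimes K(\overline{\cC})>0$, there is a $k$ such that the section $\cC_k=\{x:K(x,\overline{\cC}_x)\ge 1/k\}$ has positive $\pi_0$-mass, and each time $Y_n\in\cC_k$ one has at least probability $1/k$ of landing in $\overline{\cC}$). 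Then, for $\cC\subset\cA$ with $\pi_0^{\cB}K_{\cB\cA}(\cC)>0$, the set $\overline{\cC}=\cB\times\cC$ has positive $\pi_0\otimes K$-mass, so $(Z_n)$ visits $\cB\times\cC$ infinitely often a.s., which is precisely the statement that $(Y^{\mathrm{E}}_n)$ visits $\cC$ infinitely often. This pair-chain argument is the key idea your plan is missing.
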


Starting from $Y_0=x\in\cA$, we define the reactive entrance times by setting $\cT^{\cA}_{0} =0$ and, for all $n  \geq 0$, 
\begin{equation}\label{Eq:EntranceTimes} 
\cT^{\cB}_{n+1} = \inf \setsuch{ m > \cT^{\cA}_{n} }{ Y_m \in \cB }
\qquad\text{ and }\qquad\cT^{\cA}_{n+1}= \inf \setsuch{ m> \cT^{\cB}_{n+1} }{ Y_m \in \cA } .
\end{equation}
Hence we have $Y^\mathrm{E}_0=x$ and, for all $n\geq 1$, $Y^\mathrm{E}_n=Y_{\cT^{\cA}_{n}}.$
The following sample-path ergodic property is a consequence of the
previous result and of  \cite[Theorem 4.2.13]{HernandezLasserre}. It
can be seen as the equivalent of \eqref{lkncmkzdmzd} for a fixed initial condition $x$ and a broader class of test functions.

\begin{corollary}
Under \Cref{Ass:K}, 
for every initial condition $Y_0=Y^\mathrm{E}_0=x\in\cA$ and every function $f\in L^1(\nu_{\mathrm{E}})$, one has
\begin{equation}
\lim_{N \rightarrow \infty}\frac{1}{N} \sum_{n=1}^{N}f(Y^\mathrm{E}_n)= \int_{\cA} f(x) \nu_{\mathrm{E}}(dx) \qquad \fP_x-\text{almost surely} .
\end{equation}
\label{Lemma:EmpiricalEntranceDistribution}
\end{corollary}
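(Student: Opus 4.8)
The plan is to deduce this sample-path ergodic statement for the reactive entrance process $(Y^\mathrm{E}_n)$ directly from its positive Harris recurrence, established in Proposition~\ref{Prop:ReactiveEntranceHarris}, by invoking the general ratio/ergodic theorem for positive Harris recurrent chains. Concretely, Proposition~\ref{Prop:ReactiveEntranceHarris} tells us that $(Y^\mathrm{E}_n)$ is positive Harris recurrent on $\cA$ with invariant probability measure $\nu_{\mathrm{E}}$. The classical ergodic theorem for such chains (as stated in \cite[Theorem 4.2.13]{HernandezLasserre}) asserts that for every $f \in L^1(\nu_{\mathrm{E}})$ and \emph{every} starting point $x \in \cA$, the Cesàro averages $\frac1N \sum_{n=1}^N f(Y^\mathrm{E}_n)$ converge $\fP_x$-almost surely to $\nu_{\mathrm{E}} f = \int_\cA f \, d\nu_{\mathrm{E}}$. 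This is precisely the asserted identity, so the proof is essentially a citation once the hypotheses of that theorem are checked.

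First I would recall carefully the statement of \cite[Theorem 4.2.13]{HernandezLasserre}: it requires the chain to be positive Harris recurrent with invariant probability $\nu$, and concludes almost-sure convergence of empirical averages of any $\nu$-integrable function starting from any initial state (the ``Harris'' strengthening is exactly what upgrades convergence from $\nu$-almost-every starting point to every starting point). Then I would verify that all these hypotheses are in force here: by Proposition~\ref{Prop:ReactiveEntranceHarris}, $(Y^\mathrm{E}_n)$ is positive Harris recurrent and its unique invariant probability measure is $\nu_{\mathrm{E}}$ as given in~\eqref{EqLinkDist}; the state space $\cA$ is a compact subset of a separable metric space by Assumption~\ref{Ass:K0}, so the measurability framework required by the reference applies. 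With $f \in L^1(\nu_{\mathrm{E}})$ given, the theorem applies verbatim and delivers the claim with $Y_0 = Y^\mathrm{E}_0 = x \in \cA$.

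A minor point to address for completeness is the indexing: the corollary is phrased for $Y_0 = Y^\mathrm{E}_0 = x$, i.e.\ identifying the initial state of the ambient chain $(Y_n)$ with that of the reactive entrance process, which is consistent with the convention $\cT^{\cA}_0 = 0$ and $Y^\mathrm{E}_0 = x$ fixed just above the statement. Since the reactive entrance process is itself a time-homogeneous Markov chain with kernel $K^{\mathrm{E}}$ on $\cA$ (Definition~\ref{Def:REProcess}), its law under $\fP_x$ is entirely determined by $x$ and $K^{\mathrm{E}}$, and the ergodic theorem is applied to this chain; one should simply note that $\fP_x$ here is understood as the law of the trajectory of $(Y^\mathrm{E}_n)$ started from $x$, which coincides with the conditional law of $(Y^\mathrm{E}_n)$ under the law of $(Y_n)$ started from $x \in \cA$. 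Finally, I would remark, as the paper already does, that this is the promised pathwise counterpart of the weak-limit characterisation~\eqref{lkncmkzdmzd}: the latter gives convergence for continuous bounded test functions under the empirical measure along the full trajectory, whereas the present corollary gives almost-sure convergence for all $\nu_{\mathrm{E}}$-integrable $f$ and a fixed (arbitrary) starting point.

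I do not anticipate a genuine obstacle here, since the heavy lifting—positive Harris recurrence and the identification of $\nu_{\mathrm{E}}$—was already done in Proposition~\ref{Prop:ReactiveEntranceHarris}. The only thing requiring a little care is making sure the cited ergodic theorem is invoked with exactly the right hypotheses (positive \emph{Harris} recurrence, so that the null set of bad starting points is empty) and that the function class $L^1(\nu_{\mathrm{E}})$ matches what the theorem allows; both are satisfied. If one wanted a self-contained argument instead of a citation, one could alternatively use the regeneration structure of a Harris chain (splitting/Nummelin construction at an accessible small set of $(Y^\mathrm{E}_n)$, whose existence follows from the accessibility estimates underlying Lemma~\ref{Lemma:} and Proposition~\ref{Prop:ReactiveEntranceHarris}) together with the strong law of large numbers applied over i.i.d.\ regeneration cycles, but invoking \cite[Theorem 4.2.13]{HernandezLasserre} is the cleanest route.
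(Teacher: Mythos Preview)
Your proposal is correct and matches the paper's approach exactly: the paper states the corollary as a direct consequence of Proposition~\ref{Prop:ReactiveEntranceHarris} (positive Harris recurrence of $(Y^{\mathrm{E}}_n)$ with invariant law $\nu_{\mathrm{E}}$) together with \cite[Theorem~4.2.13]{HernandezLasserre}. There is nothing further to add.
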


\begin{remark}[From the reactive entrance distributions to the original stationary distribution]
\label{Rem:EntranceStation}
Switching the roles of $\cA$ and $\cB$, the reactive entrance process in $\cB$ also admits a unique stationary distribution. Denoting $\nu_{\mathrm{E}}^{\cB}$ the stationary distribution of the reactive entrance process in~$\cB$, and $\nu_{\mathrm{E}}^{\cA}$ the stationary distribution of the reactive entrance process in $\cA$, \eqref{Eq:StatPi0}, \eqref{EqLinkDist}, and \eqref{kajcpcj} imply that
\begin{equation}
\nu_{\mathrm{E}}^{\cB} = \frac{ \nu_{\mathrm{E}}^{\cA}(\id_{\cA}-K_{\cA})^{-1}K_{\cA\cB}}{\nu_{\mathrm{E}}^{\cA}(\id_{\cA}-K_{\cA})^{-1}K_{\cA\cB} \mathds{1}_{\cB} } .
\end{equation}
Moreover, assuming that both reactive entrance processes are positive Harris recurrent with stationary distributions $\nu_{\mathrm{E}}^{\cA}$ and $\nu_{\mathrm{E}}^{\cB}$, then the original process $(Y_n)$ is also positive Harris recurrent and its unique stationary distribution satisfies for all $\cC \in \mathscr{B}( \cE)$
\begin{equation}\label{eq:pi0nuAnuB}
 \pi_0(\cC)= \frac{1}{\expecin{ \nu_{\mathrm{E}}^{\cA}}{\cT_{\cB}} +\expecin{\nu_{\mathrm{E}}^{\cB}}{\cT_{\cA}} } \bigpar{ \nu_{\mathrm{E}}^{\cA}(\id_{\cA}-K_{\cA})^{-1} \ind{\cA \cap \cC}+ \nu_{\mathrm{E}}^{\cB}(\id_{\cB}-K_{\cB})^{-1}\ind{\cB \cap \cC}} .
\end{equation}
The proof of this formula is detailed in \Cref{SSec:Proofs}.
\end{remark}

\begin{remark}[Reversibility of the reactive entrance process] \label{rem:rev2}
The fact that $K$ is reversible with respect to $\pi_0$ does not imply
that $K^\mathrm{E}$ is reversible with respect to $\nu_{\mathrm{E}}$. An
example of such a situation is given in Appendix~\ref{SSec:ExMarkovNonRev}. 
\end{remark}

\begin{remark}[Back to the case of a diffusion process] \label{rakscuhaozuc}
Returning to the setting of Section \ref{sec:motiv}, $(Y_n)$ can
be seen as the reactive entrance process in $\bar A \cup \bar B$ after visiting~$\Upsigma$, associated with the original diffusion process
$(X_t)$. In connection with Remarks~\ref{rem:rev1} and \ref{rem:rev2}, the fact that $(X_t)$ is reversible does not imply that $(Y_n)$ is reversible. In addition, notice that $(Y_n^{\mathrm{E}})$ can be seen as the reactive entrance process of $(X_t)$ in $\cA$ after visiting $\cB$, and it does not depend on $\Upsigma$. Accordingly, as already mentioned, $\nu_{\mathrm{E}}$ does not depend on $\Upsigma$ either.
\end{remark}

As already mentioned, since the Markov chain $(Y_n)$ is positive Harris recurrent, it will visit the
sets~$\cA$ and~$\cB$ infinitely often. We are interested in the
transitions of the Markov chain from $\cA$ to~$\cB$ at equilibrium. As justified by Corollary \ref{Lemma:EmpiricalEntranceDistribution}, when we refer to an average quantity over the paths from $A$ to $B$
at
equilibrium, we thus refer to the law of the paths from $A$ to $B$ starting
from the reactive entrance distribution $\nu_{\mathrm{E}}$ in~$\cA$. For a given
test function $f: \cA \rightarrow \R$, the aim of this work is to
estimate the following equilibrium quantity:
\begin{equation}
 \expecin{\nu_{\mathrm{E}}}{\sum_{n=0}^{\cT_{\cB} -1} f(Y_n) } ,
 \label{Eq:QOI}
\end{equation}
where $\cT_{\cB} = \inf \setsuch{n  \geq 0 }{Y_n \in \cB}$.
As explained in Section~\ref{sec:motiv}, for an appropriate choice of $f$ 
and $(Y_n)$, the
quantity \eqref{Eq:QOI} is the mean reaction time at equilibrium of
a diffusion process, see \eqref{eqFuncDelta} and \eqref{EqDeltaTimeWithPoisson}.

As explained in the introduction, when the sets $\cA$ and $\cB$ are metastable, simulating directly
\eqref{Eq:QOI} is out of reach because 
$\cT_{\cB}$ is very large and $ \nu_{\mathrm{E}}$ is difficult
to sample. Nonetheless, when $\cA$ is metastable,
the process $(Y_n)$ reaches a local equilibrium
within $\cA$ (namely, a quasi-stationary distribution) before transitioning to $\cB$. In view of this, we will explain why, when replacing $\nu_\mathrm{E}$ by a
quasi-stationary distribution, the quantity~\eqref{Eq:QOI} can be
approximated thanks to the Hill relation and rare event sampling methods
such as those mentioned in the introduction. In addition, we will
quantify the bias introduced when replacing $\nu_\mathrm{E}$ by a
quasi-stationary distribution (see Section~\ref{Sec:Bound}, Theorem~\ref{Prop:EqGene}). 

Before proceeding, we need to detail the notions of killed process and $\pi$-return process. We refer  the reader to Table~\ref{TabDifferentProcesses} at the end of 
Section~\ref{Ssec:RProcess} for a summary of the
various processes at stake in this paper.

\subsection{The killed process}
\label{Ssec:Condi}
From the process $(Y_n)$ living on $ \cE = \cA \cup
\cB$, one can define the process $(Y^{\mathrm{Q}}_n)$
killed when leaving $\cA$ (or, equivalently, killed when hitting
$\cB$). 

\begin{definition}[Killed process]
The killed process when leaving $\cA$ associated with the process $(Y_n)$ is the discrete-time process $(Y^{\mathrm{Q}}_n)$ on $ \cA$ with transition kernel 
\begin{align}
 K_{\cA}(x,\cC) &= \probin{x}{ Y^{\mathrm{Q}}_1 \in \cC} = \probin{x}{Y_1 \in \cC, \cT_{\cB}>1} \quad \forall x \in \cA, \forall \cC \in \mathscr{B}(\cA) .
\end{align}
\end{definition}
 \Cref{Ass:K} implies that this process does not admit a stationary distribution. Indeed, suppose that $\pi$ is such a distribution, then from~\eqref{Eq:HittingTimes}, there exists an $n$ such that
 \begin{equation}
 \pi K^n_{\cA} \mathds{1}_{\cA}  \leq \norm{K^n_{\cA}}_{\infty} = \sup_{x \in \cA}\probin{x}{\cT_{\cB} > n} <1,
 \end{equation}
 which contradicts the stationarity because $\pi \mathds{1}_{\cA}=1$.
Nevertheless, we may consider the notion of quasi-stationary distribution, which
extends the idea of stationary distribution to such sub-Markov
kernels.

\begin{definition}[Quasi-stationary distribution]
 A probability distribution $\pi$ on $ \cA$ is a
 quasi-stationary distribution (QSD) for the killed process
 $(Y^{\mathrm{Q}}_n)$ if for every measurable set $\cC \in \mathscr{B}(\cA)$,
\begin{equation}
 \probin{\pi}{Y_n \in \cC,\cT_{\cB} >n}=\probin{\pi}{\cT_{\cB} > n}\pi( \cC ), \quad \forall n \geq 0 .
\label{Eq:DefQSD}
\end{equation}
\end{definition}

It turns out that our framework ensures the existence of such a QSD. The following result is well known in various contexts. For example, when $(Y_t)$ is a continuous-time Markov process, it corresponds to Equation (2.18) in \cite{Collet_Martinez_SanMartin_book}.

\begin{lemma}
\label{Lemma:KilledEigen}
Under~\Cref{Ass:K}, the  killed process $(Y^{\mathrm{Q}}_n)$ admits a QSD. Specifically, a probability measure $\pi$ on
$\cA$ is a QSD for  $(Y^{\mathrm{Q}}_n)$ if and only if there exists $\theta  \geq 0$ such that 
\begin{equation}
\pi K_{\cA} = \theta \pi ,
\end{equation}
in which case $\theta = \probin{\pi}{\cT_{\cB}>1}$.
\end{lemma}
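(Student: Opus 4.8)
The plan is to handle separately the two assertions of the lemma: the characterisation of the QSDs of $(Y^{\mathrm Q}_n)$ as left eigenmeasures of $K_\cA$, which is purely algebraic, and the existence of such an eigenmeasure, which I would obtain from a fixed-point argument on the set of probability measures on $\cA$.

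For the characterisation, suppose first that $\pi$ is a QSD. Taking $n=1$ in \eqref{Eq:DefQSD} and recalling that $K_\cA(x,\cC)=\probin{x}{Y_1\in\cC,\cT_\cB>1}$, the left-hand side is $\pi K_\cA(\cC)$ and the right-hand side is $\theta\,\pi(\cC)$ with $\theta:=\probin{\pi}{\cT_\cB>1}$, hence $\pi K_\cA=\theta\pi$. Conversely, if $\pi K_\cA=\theta\pi$ for some $\theta\ge0$, then $\pi K_\cA^n=\theta^n\pi$ for all $n\ge0$; since every $\cC\in\mathscr{B}(\cA)$ satisfies $\cC\subseteq\cA$, one has $\probin{\pi}{Y_n\in\cC,\cT_\cB>n}=\pi K_\cA^n(\cC)=\theta^n\pi(\cC)$ and, with $\cC=\cA$, $\probin{\pi}{\cT_\cB>n}=\theta^n$, which is exactly \eqref{Eq:DefQSD}. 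In either direction, testing $\pi K_\cA=\theta\pi$ against $\mathds 1_\cA$ and using that $\{\cT_\cB>1\}=\{Y_1\in\cA\}$ $\fP_\pi$-a.s.\ (because $Y_0\in\cA$) identifies $\theta=\probin{\pi}{\cT_\cB>1}$.

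For existence I would work with the potential kernel $G_\cA:=(\id_\cA-K_\cA)^{-1}=\sum_{n\ge0}K_\cA^n$, which by \Cref{Cor:Invert} is a bounded nonnegative kernel on $\cA$ with $G_\cA\mathds 1_\cA(x)=\expecin{x}{\cT_\cB}\in[1,M]$, where $M:=\sup_{x\in\cA}\expecin{x}{\cT_\cB}<\infty$; here finiteness and the summability $\sum_n\|K_\cA^n\|_\infty<\infty$ both follow from \eqref{EqProbinGE} together with the bound $\|K_\cA^{n_0}\|_\infty=\sup_x\probin{x}{\cT_\cB>n_0}<1$ provided by \Cref{Lemma:}. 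On the set $\mathcal P(\cA)$ of Borel probability measures on $\cA$ define $T(\pi):=\pi G_\cA/(\pi G_\cA\mathds 1_\cA)$; the normalisation is legitimate since $\pi G_\cA\mathds 1_\cA\ge1$, and $T$ maps $\mathcal P(\cA)$ into itself. The key point is the weak-$*$ continuity of $T$: because $\cA$ and $\cB$ are disjoint compacts, any $f\in C(\cA)$ extended by $0$ on $\cB$ is continuous on $\cE$, so the weak-Feller assumption of \Cref{Ass:K} yields $K_\cA f\in C(\cA)$, hence $K_\cA^n f\in C(\cA)$ and $\pi\mapsto\pi K_\cA^n f$ is weak-$*$ continuous for each $n$; the uniform bound $|\pi K_\cA^n f|\le\|f\|_\infty\|K_\cA^n\|_\infty$ together with the summability of $\|K_\cA^n\|_\infty$ makes $\pi\mapsto\pi G_\cA f$ a uniform limit of continuous maps, while the denominator is continuous and bounded below by $1$. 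Since $\mathcal P(\cA)$ is a nonempty compact convex subset of the locally convex space of finite signed measures on $\cA$ equipped with the weak-$*$ topology, the Schauder--Tychonoff theorem provides a fixed point $\pi_*$.

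It then remains to read off quasi-stationarity from the fixed-point identity. Writing $c:=\pi_* G_\cA\mathds 1_\cA=\expecin{\pi_*}{\cT_\cB}\ge1$, one has $\pi_* G_\cA=c\,\pi_*$; combining this with the resolvent identity $G_\cA K_\cA=G_\cA-\id_\cA$ gives $c\,\pi_* K_\cA=(\pi_* G_\cA)K_\cA=\pi_* G_\cA-\pi_*=(c-1)\pi_*$, i.e.\ $\pi_* K_\cA=\theta\pi_*$ with $\theta=(c-1)/c\in[0,1)$, so $\pi_*$ is a QSD by the characterisation above. I expect the only genuine obstacle to be the continuity of $T$: one must check carefully that passing test functions back and forth between $\cA$ and $\cE$ preserves continuity---which relies crucially on $\cA$ and $\cB$ being at positive distance---and that the series defining $G_\cA$ converges uniformly over $\mathcal P(\cA)$, so that the weak-Feller property can be applied termwise; the algebraic equivalence and the resolvent manipulation are routine.
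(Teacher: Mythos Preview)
Your characterisation argument is essentially identical to the paper's. For existence, however, the paper does not argue at all: it simply cites \cite[Proposition 2.10]{Collet_Martinez_SanMartin_book}, invoking compactness of $\cA$ and the weak-Feller property of $K$.

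Your route is genuinely different and self-contained, and it meshes well with objects the paper introduces later: your map $T(\pi)=\pi G_\cA/(\pi G_\cA\mathds 1_\cA)$ is exactly the map $R$ of Proposition~\ref{Prop:InvMeasureReturn} (the stationary distribution of the $\pi$-return process), and the equivalence ``$\pi$ is a QSD $\Leftrightarrow$ $\pi=R(\pi)$'' is precisely Proposition~\ref{Prop:UniquStationaryReturnQSD}. So you are supplying a direct Schauder--Tychonoff proof that $R$ has a fixed point, something the paper obtains only through the external citation. What the citation buys is brevity; what your argument buys is self-containment and a transparent link to the return-process machinery that drives the rest of the paper. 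The weak-$*$ continuity step, which you rightly flag as the only delicate point, is handled correctly: extending $f\in C(\cA)$ by zero on $\cB$ yields a continuous function on $\cE$ because $\cA$ and $\cB$ are disjoint compacts, so the weak-Feller hypothesis applies termwise, and the uniform summability $\sum_n\|K_\cA^n\|_\infty<\infty$ from \Cref{Lemma:} lets you interchange the limit with the series.
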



\begin{remark}[Degenerate case] Although theoretically possible, the case $\theta=0$ is of no interest in our context. It occurs when there exists $x\in\cA$ such that $\probin{x}{\cT_{\cB}>1}=0$, meaning that if $Y_0=x$ then $Y_1\in\cB$ almost surely. Therefore, from now on, we assume that
\begin{equation}
\forall x\in\cA,\hspace{1cm}\probin{x}{\cT_{\cB}>1}>0,
\label{lajcnlajnc}
\end{equation}
which in turn ensures that $\theta>0$ in Lemma \ref{Lemma:KilledEigen}.
\end{remark}

\begin{remark}[On the uniqueness of the QSD]\label{rem:unique_QSD}
As stated in Lemma~\ref{Lemma:KilledEigen}, \Cref{Ass:K} implies the existence of a quasi-stationary
 distribution, but as shown in~\cite[Section
 3.1]{BenaimCloezPanloup} there may exist several quasi-stationary
 distributions. Conditions to ensure the uniqueness can be found for
 example in~\cite{Champagnat_Villemonais_2017,del2001stability,del2004particle}, see
 also the recent review article~\cite{moral2021stability}.
A simple sufficient condition to get
 uniqueness is the following so-called two-sided estimate (see for example \cite[Section
 7.1]{Champagnat_Villemonais_2017},~\cite{BerglundLandon}
 and~Section~\ref{Sec:birkoff}): if for some $n \geq 1$, there exist a constant $C
 \geq1$, a probability measure $\pi$ on $\cA$, and a measurable function $s: \cA \rightarrow (0,1]$ such that, for all $x\in \cA$, 
\begin{equation}
 s(x) \pi(dy)  \leq K^n_{\cA}(x,dy)  \leq C s(x) \pi(dy) ,
 \label{Eq:UniformlyPos}
\end{equation}
then the killed Markov chain admits a unique QSD. Notice
that~\eqref{Eq:UniformlyPos} implies that the so-called Dobrushin
ergodic coefficient is smaller than one, which also yields
uniqueness of the quasi-stationary
distribution, see~\cite{dobrushin1956central,moral2021stability,del2001stability}
and~\cite[Section 12.2]{del2013mean}.
\end{remark}

In the sequel, a quasi-stationary distribution for the process $
(Y^{\mathrm{Q}}_n)$ will be denoted by~$\nu_{\mathrm{Q}}$. We remind
that, by \eqref{lajcnlajnc}, this implicitly implies that
$\probin{\nu_{\mathrm{Q}}}{ \cT_{\cB} =1 }$<1. Let us finally
  recall a classical result in the literature on quasi-stationary
  distributions (see e.g.  \cite[Theorem
  2.2]{Collet_Martinez_SanMartin_book} for an equivalent result in a
  continuous-time setting.)
\begin{lemma}
\label{Lemma:QSDandKilling}
Starting from a quasi-stationary distribution $\nu_{\mathrm{Q}}$, the
killing time $\cT_{\cB}$ is geometrically distributed with parameter
 \begin{equation}
 p=\probin{\nu_{\mathrm{Q}}}{ \cT_{\cB} =1 } \in(0,1).
 \label{Eq:Defp}
 \end{equation}
Notice that $p= \nu_{\mathrm{Q}}K_{\cA \cB} \mathds{1}_{\cB}$ and
 $1-p = \nu_{\mathrm{Q}}K_{\cA} \mathds{1}_{\cA}$.
 \end{lemma}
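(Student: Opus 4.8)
The statement has two parts: (i) that $\cT_{\cB}$ started from $\nu_{\mathrm{Q}}$ is geometric with parameter $p = \probin{\nu_{\mathrm{Q}}}{\cT_{\cB}=1}$, and (ii) that $p \in (0,1)$ together with the two explicit formulas for $p$ and $1-p$. The core tool is the defining relation of a quasi-stationary distribution: starting from $\nu_{\mathrm{Q}}$, the conditional law given survival up to time $n$ is still $\nu_{\mathrm{Q}}$. Equivalently, by Lemma~\ref{Lemma:KilledEigen}, $\nu_{\mathrm{Q}} K_{\cA} = \theta\, \nu_{\mathrm{Q}}$ with $\theta = \probin{\nu_{\mathrm{Q}}}{\cT_{\cB}>1}$.

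The plan is to first compute, for each $n \geq 1$, the survival probability $\probin{\nu_{\mathrm{Q}}}{\cT_{\cB} > n}$. Using \eqref{EqProbinGE}, this equals $\nu_{\mathrm{Q}} K_{\cA}^n \mathds{1}_{\cA}$. Iterating the eigen-relation $\nu_{\mathrm{Q}} K_{\cA} = \theta\,\nu_{\mathrm{Q}}$ gives $\nu_{\mathrm{Q}} K_{\cA}^n \mathds{1}_{\cA} = \theta^n\, \nu_{\mathrm{Q}} \mathds{1}_{\cA} = \theta^n$. Hence $\probin{\nu_{\mathrm{Q}}}{\cT_{\cB} > n} = \theta^n$ for all $n \geq 0$, which is exactly the survival function of a geometric law with success probability $p := 1 - \theta$. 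In particular $p = 1 - \probin{\nu_{\mathrm{Q}}}{\cT_{\cB}>1} = \probin{\nu_{\mathrm{Q}}}{\cT_{\cB}=1}$, matching \eqref{Eq:Defp}.

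Next I would pin down $p \in (0,1)$. The bound $p < 1$ (equivalently $\theta > 0$) is exactly the content of the standing assumption \eqref{lajcnlajnc} together with the Degenerate-case remark and Lemma~\ref{Lemma:KilledEigen}: since $\probin{x}{\cT_{\cB}>1}>0$ for all $x \in \cA$, integrating against $\nu_{\mathrm{Q}}$ gives $\theta = \nu_{\mathrm{Q}}K_{\cA}\mathds{1}_{\cA} > 0$. The bound $p > 0$ (equivalently $\theta < 1$) follows from \eqref{Eq:HittingTimes} in Lemma~\ref{Lemma:}: there is an integer $m$ with $\inf_{x\in\cA}\probin{x}{\cT_{\cB}\leq m} > 0$, so $\norm{K_{\cA}^m}_\infty = \sup_{x\in\cA}\probin{x}{\cT_{\cB}>m} < 1$; then $\theta^m = \nu_{\mathrm{Q}}K_{\cA}^m\mathds{1}_{\cA} \leq \norm{K_{\cA}^m}_\infty < 1$, forcing $\theta < 1$, i.e. $p > 0$. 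Finally the two displayed identities are immediate: $p = \probin{\nu_{\mathrm{Q}}}{\cT_{\cB}=1} = \probin{\nu_{\mathrm{Q}}}{Y_1 \in \cB} = \nu_{\mathrm{Q}}K_{\cA\cB}\mathds{1}_{\cB}$ by \eqref{alzj}, and $1-p = \probin{\nu_{\mathrm{Q}}}{\cT_{\cB}>1} = \nu_{\mathrm{Q}}K_{\cA}\mathds{1}_{\cA}$ by \eqref{EqProbinGE} with $n=1$.

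There is no real obstacle here: the argument is a direct unwinding of the QSD eigen-equation. The only point requiring minor care is making sure the geometric distribution is stated with the convention the paper uses (here $\probin{}{\cT_{\cB}=k} = (1-p)^{k-1}p$ for $k \geq 1$, consistent with $\probin{}{\cT_{\cB}>n} = (1-p)^n$), and invoking the correct earlier results — Lemma~\ref{Lemma:KilledEigen} for the eigen-relation, \eqref{EqProbinGE} for the survival-probability formula, \eqref{lajcnlajnc} for $p<1$, and Lemma~\ref{Lemma:} / \eqref{Eq:HittingTimes} for $p>0$.
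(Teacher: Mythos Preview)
Your proposal is correct and follows essentially the same approach as the paper: iterate the eigen-relation $\nu_{\mathrm{Q}} K_{\cA} = \theta\,\nu_{\mathrm{Q}}$ from Lemma~\ref{Lemma:KilledEigen} together with \eqref{EqProbinGE} to obtain $\probin{\nu_{\mathrm{Q}}}{\cT_{\cB}>n}=\theta^n$. Your write-up is in fact more thorough than the paper's, which does not spell out the justifications for $p\in(0,1)$ or the two displayed identities; your use of \eqref{lajcnlajnc} and \eqref{Eq:HittingTimes} for these bounds is exactly right.
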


 According to the notation of Lemma \ref{Lemma:KilledEigen}, we thus have $\theta=1-p$.


\subsection{The $\pi$-return process}
\label{Ssec:RProcess}
The $\pi$-return process was introduced in~\cite{BartlettSto} in the
context of population genetics. In words, the $\pi$-return process in $\cA$ is a process that evolves exactly like $(Y_n)$ until it hits $\cB$, and is then instantaneously redistributed on $ \cA$ according to some probability measure $\pi$ supported on $\cA$. As a consequence, the $\pi$-return process can be seen as a process with a sink in $\cB$ and a source in $\cA$.


For any nonnegative test function $f:  \cE \to \R_+$ and any probability measure $\pi$, the outer product $f\otimes \pi$ is the nonnegative operator defined by $(f \otimes \pi)(x,\cC) =f(x)\pi(\cC)$. Here is the formal definition of the $\pi$-return process. 

\begin{definition}[$\pi$-return process]
Let $\pi$ and $(Y_n)$ denote respectively a probability measure on $ \cA$ and a Markov chain on $ \cA\cup\cB$. The $\pi$-return process associated to $(Y_n)$ is the Markov chain $(Y^\pi_n)$ on~$\cA$ with transition kernel 
\begin{equation}
K^\pi(x,\cC) = \probin{x}{Y_1 \in \cC, \cT_{\cB}>1} +\probin{x}{Y_1 \in \cB}\pi(\cC) \qquad \forall x \in \cA, \forall \cC \in \mathscr{B}(\cA),
\end{equation}
or, equivalently,
\begin{equation}
K^\pi = K_{ \cA} +( K_{\cA \cB}\mathds{1}_{\cB} ) \otimes \pi.
\end{equation}
\end{definition}


The next result shows that the $\pi$-return process admits a unique stationary distribution. This relation is known in several contexts. For example, it coincides with: \cite[Equation (2.1)]{MR3352332}  in a finite state space, \cite[Equation (2.4)]{MR1334159} and~\cite[Equation (2.3)]{MR2752899} in a countable state space, \cite[Proposition 4.5]{BenaimCloezPanloup} under slightly different assumptions, and \cite[Theorem 6]{MR4254493} in continuous time.

\begin{proposition}
\label{Prop:InvMeasureReturn}
Under~\Cref{Ass:K}, the $\pi$-return process admits a unique stationary distribution, that is
\begin{equation}
R(\pi) = \frac{\pi (\id_{\cA}-K_{\cA})^{-1}}{\expecin{\pi}{\cT_{\cB}}}  .
\label{Eq:RPi}
\end{equation}
\end{proposition}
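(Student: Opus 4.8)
The plan is to verify two things: first, that the probability measure $R(\pi)$ defined by \eqref{Eq:RPi} is well-defined (i.e.\ that $\expecin{\pi}{\cT_{\cB}}<\infty$ and that $R(\pi)$ is indeed a probability measure on $\cA$), and second, that it is the unique stationary distribution for the kernel $K^\pi = K_{\cA} +( K_{\cA\cB}\mathds{1}_{\cB})\otimes\pi$. Well-posedness is immediate from \Cref{Cor:Invert} and \Cref{Cor:PoissonCor}: indeed $\expecin{\pi}{\cT_{\cB}} = \pi(\id_{\cA}-K_{\cA})^{-1}\mathds{1}_{\cA}$ is finite by \Cref{Cor:PoissonCor} (and bounded away from zero since $\cT_{\cB}\geq 1$), and $R(\pi)$ is nonnegative; that it has total mass one follows from $R(\pi)\mathds{1}_{\cA} = \pi(\id_{\cA}-K_{\cA})^{-1}\mathds{1}_{\cA} / \expecin{\pi}{\cT_{\cB}} = 1$ again by \Cref{Cor:PoissonCor}.

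For stationarity, I would compute $R(\pi)K^\pi$ directly and check it equals $R(\pi)$. Writing $\mu := \pi(\id_{\cA}-K_{\cA})^{-1}$, so that $R(\pi) = \mu/(\mu\mathds{1}_{\cA})$, it suffices to show $\mu K^\pi = \mu$, i.e.\
\begin{equation}
\mu K_{\cA} + (\mu K_{\cA\cB}\mathds{1}_{\cB})\,\pi = \mu .
\end{equation}
Now $\mu - \mu K_{\cA} = \pi(\id_{\cA}-K_{\cA})^{-1}(\id_{\cA}-K_{\cA}) = \pi$, so the identity to prove reduces to $(\mu K_{\cA\cB}\mathds{1}_{\cB})\,\pi = \pi$, that is, to the scalar identity $\mu K_{\cA\cB}\mathds{1}_{\cB} = 1$. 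But by \eqref{Eq:PropKAOverOne} we have $K_{\cA\cB}\mathds{1}_{\cB} = (\id_{\cA}-K_{\cA})\mathds{1}_{\cA}$, hence $\mu K_{\cA\cB}\mathds{1}_{\cB} = \pi(\id_{\cA}-K_{\cA})^{-1}(\id_{\cA}-K_{\cA})\mathds{1}_{\cA} = \pi\mathds{1}_{\cA} = 1$, which closes this part. (Equivalently, one recognizes $\mu K_{\cA\cB}\mathds{1}_{\cB} = \probin{\mu}{Y_1\in\cB}$ as the expected number of excursions per unit of "clock time", which equals $1$ after the normalization.)

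For uniqueness, I would argue that the $\pi$-return process $(Y^\pi_n)$ is positive Harris recurrent on $\cA$, so \Cref{oaihzdozihd} gives uniqueness of the invariant probability measure. Harris recurrence follows from \Cref{Lemma:}: by \eqref{Eq:HittingTimes} there is an $n$ with $\inf_{x\in\cA}\probin{x}{\cT_{\cB}\leq n}>0$, so from any starting point the process reaches $\cB$ — hence gets redistributed according to $\pi$ — within $n$ steps with probability bounded below uniformly; this yields a Doeblin-type minorization $K^\pi{}^n(x,\cdot) \geq c\,\pi(\cdot)$ for some $c>0$, from which $\pi$-irreducibility, aperiodicity, and uniform (hence Harris) recurrence follow by standard arguments (e.g.\ \cite[Chapter 9]{MeynTweedieMCSto}). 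Since we have already exhibited an invariant probability measure, the chain is positive Harris recurrent and $R(\pi)$ is its unique stationary distribution. The main obstacle is purely bookkeeping: making the weak-Feller property \ref{Ass:K1} interact cleanly with the minorization on a general (non-discrete) state space so that the cited general-state-space recurrence theory applies; the algebraic core of the statement is the short computation above.
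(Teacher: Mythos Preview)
Your verification that $R(\pi)K^\pi=R(\pi)$ is exactly the paper's computation: both reduce to $\mu-\mu K_{\cA}=\pi$ and $\mu K_{\cA\cB}\mathds{1}_{\cB}=1$ via \eqref{Eq:PropKAOverOne}, and then normalize using \Cref{Cor:PoissonCor}. For uniqueness the paper likewise appeals to \Cref{oaihzdozihd} after establishing recurrence of $(Y^\pi_n)$, so the overall structure is the same. One small slip in your argument: the Doeblin bound $(K^\pi)^n(x,\cdot)\geq c\,\pi(\cdot)$ does not hold as written, because if the first reset occurs at time $k<n$ the law of $Y^\pi_n$ is $\pi(K^\pi)^{n-k}$ rather than $\pi$, and these pushforwards enter with $x$-dependent weights $\probin{x}{\cT_{\cB}=k}$, so no single minorizing measure falls out. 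The clean fix is to bypass Doeblin and use the regeneration you already identified: the successive reset times are a.s.\ finite with uniformly bounded mean (\Cref{Cor:Invert}--\Cref{Cor:PoissonCor}), and at each reset the chain restarts independently from $\pi$, so for any $\cC$ with $\pi(\cC)>0$ the i.i.d.\ samples $Y^\pi_{\sigma_j}\sim\pi$ land in $\cC$ infinitely often a.s., which is exactly Harris recurrence with respect to $\pi$. The paper's recurrence argument is in this spirit.
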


Before going further, \Cref{TabDifferentProcesses} recaps the various
 Markov chains introduced so far. Note that $\nu_{\mathrm{Q}}$ is not an invariant distribution for the killed process but a quasi-stationary distribution.
\begin{table}[H]
\centering
\begin{tabular}{|M{3.6cm} |M{1.0cm} |M{7.1cm}| M{1.4cm}|}
\hline 
Markov chain & State Space & Transition kernel & Invariant Measure \\ 
\hline 
$(Y_n)$: Initial process & $\cA \cup \cB$  & $K$ &$\pi_0$ \\ 
\hline 
$(Y^{\mathrm{Q}}_n)$: Process killed when leaving $\cA$ & $ \cA$  &$K_{\cA} $ & $\nu_{\mathrm{Q}}$ (QSD) \\ 
\hline 
$(Y^{\mathrm{E}}_n)$: Reactive entrance process & $ \cA$  &$ K^{\mathrm{E}}=(\id_{\cA}-K_{\cA})^{-1}K_{\cA\cB}(\id_{\cB}-K_{\cB})^{-1}K_{\cB \cA}$ & $\nu_{\mathrm{E}}$ \\ 
\hline 
$(Y^{\pi}_n)$: $\pi$-Return process & $ \cA$ &$ K^{\pi} = K_{\cA}+ (K_{\cA \cB} \mathds{1}_{\cB}) \otimes \pi $ & $R(\pi)$ \\ 
\hline 
\end{tabular} 
\caption{Summary of the different Markov chains.}
 \label{TabDifferentProcesses} 
\end{table}

\section{The Hill relation }
\label{Sec:Hill}
Considering a source-sink process  at equilibrium, the Hill relation is an equality between the mean reaction time from 
the source to the sink and the inverse of 
the probability flux. The 
probability flux is the proportion of trajectories starting from one 
state and reaching the other state within one unit of time. This relation
was introduced in biochemistry, see~\cite[Section
8]{hill1977free}. Let us just mention that variations exist depending on how the source-sink
process is precisely defined.

\subsection{The general Hill relation}


In our context, we derive a Hill relation for
the $\pi$-return process of Section~\ref{Ssec:RProcess}, in
a more general form than the standard one which only considers
the mean reaction time.

\begin{proposition}
\label{PropHill}
Under~\Cref{Ass:K}, let $\pi$ be a probability distribution on $\cA$ and consider the $\pi$-return process together with its stationary distribution $R(\pi)$. Then, for any test function $f: \cA \rightarrow \R$,
\begin{equation}\label{eq:Hill}
\expecin{\pi}{\sum_{n=0}^{\cT_{\cB} -1} f(Y_n) } = \frac{R( \pi) f}{\probin{R(\pi) }{Y_1 \in \cB}}  .
\end{equation}
The classical Hill relation is obtained by setting $f = \mathds{1}_{\cA}$, that is 
\begin{equation}
\expecin{\pi}{ \cT_{\cB} } = \frac{1}{\probin{R(\pi)}{Y_1 \in \cB}}  .
\end{equation}
\end{proposition}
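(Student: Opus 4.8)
The plan is to leverage the explicit formula for the stationary distribution $R(\pi)$ of the $\pi$-return process given in Proposition~\ref{Prop:InvMeasureReturn}, namely $R(\pi) = \pi(\id_{\cA}-K_{\cA})^{-1}/\expecin{\pi}{\cT_{\cB}}$, together with the Poisson-equation identity~\eqref{lakcnlazknclckajcb}, which states that $\pi(\id_{\cA}-K_{\cA})^{-1}g = \expecin{\pi}{\sum_{n=0}^{\cT_{\cB}-1}g(Y_n)}$ for any probability measure $\pi$ on $\cA$ and any test function $g$. The target identity~\eqref{eq:Hill} is then essentially a matter of substituting these two facts into the right-hand side and simplifying.

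First I would compute the numerator: applying the formula for $R(\pi)$ to the test function $f$ gives
\[
R(\pi)f = \frac{\pi(\id_{\cA}-K_{\cA})^{-1}f}{\expecin{\pi}{\cT_{\cB}}} = \frac{\expecin{\pi}{\sum_{n=0}^{\cT_{\cB}-1}f(Y_n)}}{\expecin{\pi}{\cT_{\cB}}},
\]
where the second equality is~\eqref{lakcnlazknclckajcb}. Next I would compute the denominator: by~\eqref{alzj} we have $\probin{R(\pi)}{Y_1\in\cB} = R(\pi)K_{\cA\cB}\mathds{1}_{\cB}$, and applying the formula for $R(\pi)$ with $g = K_{\cA\cB}\mathds{1}_{\cB}$, together with~\eqref{Eq:PropKAOverOne} which identifies $K_{\cA\cB}\mathds{1}_{\cB} = (\id_{\cA}-K_{\cA})\mathds{1}_{\cA}$, yields
\[
\probin{R(\pi)}{Y_1\in\cB} = \frac{\pi(\id_{\cA}-K_{\cA})^{-1}(\id_{\cA}-K_{\cA})\mathds{1}_{\cA}}{\expecin{\pi}{\cT_{\cB}}} = \frac{\pi\mathds{1}_{\cA}}{\expecin{\pi}{\cT_{\cB}}} = \frac{1}{\expecin{\pi}{\cT_{\cB}}},
\]
using that $\pi$ is a probability measure on $\cA$. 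Taking the ratio of numerator and denominator, the factors $\expecin{\pi}{\cT_{\cB}}$ cancel and one is left precisely with $\expecin{\pi}{\sum_{n=0}^{\cT_{\cB}-1}f(Y_n)}$, which is~\eqref{eq:Hill}. The classical Hill relation follows by setting $f = \mathds{1}_{\cA}$, since $\sum_{n=0}^{\cT_{\cB}-1}\mathds{1}_{\cA}(Y_n) = \cT_{\cB}$ (all of $Y_0,\dots,Y_{\cT_{\cB}-1}$ lie in $\cA$).

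The only genuinely delicate point is to make sure every object in the manipulation is well-defined and finite: the operator $(\id_{\cA}-K_{\cA})^{-1}$ is well-posed by Corollary~\ref{Cor:Invert}, and $\expecin{\pi}{\cT_{\cB}}<\infty$ follows from Corollary~\ref{Cor:PoissonCor} together with the uniform bound~\eqref{Eq:HittingTimes} of Lemma~\ref{Lemma:}, so in particular $\expecin{\pi}{\cT_{\cB}}>0$ and the division is legitimate; one should also note that $f$ need only be a test function (bounded measurable), so $\expecin{\pi}{\sum_{n=0}^{\cT_{\cB}-1}f(Y_n)}$ is finite as well, being bounded in absolute value by $\norm{f}_\infty\expecin{\pi}{\cT_{\cB}}$. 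Modulo these finiteness checks — which are all immediate consequences of results already established — the proof is a short algebraic computation, so I do not anticipate any real obstacle.
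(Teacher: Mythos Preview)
Your proof is correct and follows essentially the same route as the paper: both arguments reduce the Hill relation to the identity $R(\pi)/\probin{R(\pi)}{Y_1\in\cB} = \pi(\id_{\cA}-K_{\cA})^{-1}$ and then invoke~\eqref{lakcnlazknclckajcb}. The only cosmetic difference is that the paper obtains this identity in one line from the stationarity equation $R(\pi)K^{\pi}=R(\pi)$ (yielding $R(\pi)(\id_{\cA}-K_{\cA}) = \probin{R(\pi)}{Y_1\in\cB}\,\pi$ directly), whereas you compute numerator and denominator separately via the explicit formula of Proposition~\ref{Prop:InvMeasureReturn} and cancel $\expecin{\pi}{\cT_{\cB}}$; the content is the same.
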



\begin{remark}[On other source-sink processes]
For related results, we refer to~\cite[Theorem~A.1]{AristoffAnalysis2018} where a Hill relation is derived for a
slightly different source-sink process than the $\pi$-return process
$(Y^\pi_n)$ considered here.
\end{remark}
 
To compute the quantity of interest, namely
\begin{equation}
\expecin{\nu_{\mathrm{E}}}{\sum_{n=0}^{\cT_{\cB} -1} f(Y_n) },
\label{jcnljnc}
\end{equation}
it is natural to
apply the Hill relation to $\pi=\nu_{\mathrm{E}}$, where
$\nu_{\mathrm{E}}$ is the reactive entrance distribution in $\cA$. However, this requires
to identify $R(\nu_{\mathrm{E}})$. This is the purpose of the next section.

\subsection{The Hill relation and the reactive entrance distribution}\label{sec:EqHillEntranceDist}

Thanks to the explicit formulas obtained  previously for $R(\pi)$ and
$\nu_{\mathrm{E}}$, the Hill relation applied to the reactive entrance
distribution yields a first useful expression to compute~\eqref{jcnljnc}. Putting \Cref{Prop:InvMeasureReturn} and \eqref{kajcpcj} together indeed provides the following result. Recall that $\pi_{0 | \mathcal A}$ was defined in \eqref{Eq:ConditionedMeasure}. 

\begin{corollary}
\label{Cor:ReactiveEntranceReturn}
The stationary distribution of the $\nu_{\mathrm{E}}$-return process is
the stationary distribution $\pi_0$ conditioned to $\cA$:
\begin{equation}
R(\nu_{\mathrm{E}}) =\pi_{0 | \mathcal A}  .\label{EqStationaryDistributionOfNu0ReturnProcess}
\end{equation}
In particular, the Hill relation yields
\begin{equation}
\expecin{\nu_{\mathrm{E}}}{\sum_{n=0}^{\cT_{\cB} -1} f(Y_n) } = \frac{\pi_{0 | \mathcal A} f}{\probin{\pi_{0 | \mathcal A}}{Y_1 \in \cB}}  .
\label{EqHillEntranceDist}
\end{equation}
\end{corollary}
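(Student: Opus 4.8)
The plan is to combine the two explicit formulas already at our disposal: the formula \eqref{Eq:RPi} for the stationary distribution $R(\pi)$ of the $\pi$-return process from \Cref{Prop:InvMeasureReturn}, and the formula \eqref{kajcpcj} from \Cref{Prop:ReactiveEntranceHarris} relating $\pi_{0|\cA}$ to $\nu_{\mathrm{E}}$. First I would specialize \eqref{Eq:RPi} to $\pi=\nu_{\mathrm{E}}$, which gives
\[
R(\nu_{\mathrm{E}}) = \frac{\nu_{\mathrm{E}}(\id_{\cA}-K_{\cA})^{-1}}{\expecin{\nu_{\mathrm{E}}}{\cT_{\cB}}}.
\]
Then I would observe that the right-hand side is exactly the right-hand side of \eqref{kajcpcj}, which asserts
\[
\pi_{0|\cA} = \frac{\nu_{\mathrm{E}}(\id_{\cA}-K_{\cA})^{-1}}{\expecin{\nu_{\mathrm{E}}}{\cT_{\cB}}}.
\]
Hence $R(\nu_{\mathrm{E}}) = \pi_{0|\cA}$, which is \eqref{EqStationaryDistributionOfNu0ReturnProcess}. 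A small point worth checking is that $\expecin{\nu_{\mathrm{E}}}{\cT_{\cB}}<\infty$, so that the expression is well-defined; this follows from \eqref{Eq:HittingTimes} in \Cref{Lemma:} (uniform geometric tail of $\cT_{\cB}$ started from any point of $\cA$), together with the fact that $\nu_{\mathrm{E}}$ is a probability measure on $\cA$, and it is in any case implicit in the statement of \Cref{Prop:ReactiveEntranceHarris}.

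For the second assertion \eqref{EqHillEntranceDist}, I would simply invoke the general Hill relation \eqref{eq:Hill} of \Cref{PropHill} with the choice $\pi=\nu_{\mathrm{E}}$: this yields
\[
\expecin{\nu_{\mathrm{E}}}{\sum_{n=0}^{\cT_{\cB}-1} f(Y_n)} = \frac{R(\nu_{\mathrm{E}})f}{\probin{R(\nu_{\mathrm{E}})}{Y_1\in\cB}},
\]
and then substitute $R(\nu_{\mathrm{E}})=\pi_{0|\cA}$ from the first part to obtain the claimed identity. One should note in passing that the denominator $\probin{\pi_{0|\cA}}{Y_1\in\cB}$ is strictly positive — this is guaranteed since $\probin{\pi_{0|\cA}}{Y_1\in\cB}=\pi_{0|\cA}K_{\cA\cB}\mathds{1}_{\cB}$ by \eqref{alzj}, and \Cref{Ass:K} together with \Cref{Lemma:} forces this to be nonzero (otherwise $(Y_n)$ could not move from $\cA$ to $\cB$, contradicting positive Harris recurrence and $\pi_0(\cB)>0$).

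In short, the proof is essentially a one-line combination of \Cref{Prop:InvMeasureReturn} and \eqref{kajcpcj}, followed by an application of \Cref{PropHill}; there is no real obstacle. The only thing requiring a modicum of care is bookkeeping of the well-posedness of the various quotients (finiteness of $\expecin{\nu_{\mathrm{E}}}{\cT_{\cB}}$ and positivity of $\probin{\pi_{0|\cA}}{Y_1\in\cB}$), all of which are immediate consequences of \Cref{Ass:K} and the preliminary results \Cref{Lemma:}, \Cref{Cor:Invert}, and \Cref{Cor:PoissonCor} established earlier.
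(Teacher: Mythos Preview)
Your proof is correct and follows essentially the same approach as the paper, which simply states that the result follows by putting \Cref{Prop:InvMeasureReturn} and \eqref{kajcpcj} together, and then applying the general Hill relation \Cref{PropHill}. Your additional remarks on the well-posedness of the quotients are accurate but not strictly needed here.
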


\begin{remark}[Connection with potential theory]\label{rem:th}
In the specific case where $\mathcal E$ is discrete and $K$ is
reversible with respect to $\pi_0$,
Equation~\eqref{EqHillEntranceDist} is very similar to~\cite[Equation~(7.1.37)]{bovier-den-hollander-16},
using~\eqref{EqSolutionNonDirichletBoundaryValueProblem}. In this
context,  $\probin{\pi_{0 | \mathcal A}}{Y_1 \in \cB}$ is denoted
${\rm cap}(\cA,\cB)$, and called the capacity between $\cA$ and
$\cB$. In addition, $h_{\cA,\cB}$
in~\cite[Equation~(7.1.37)]{bovier-den-hollander-16} is simply
${\mathbf 1}_{\cA}$ since the state space is $\mathcal E=\cA \cup
\cB$. In potential theory, the measure $\nu_{\mathrm{E}}$ is called
the last-exit biased distribution. In this respect, Proposition~\ref{PropHill}
generalizes~\cite[Equation~(7.1.37)]{bovier-den-hollander-16} to any
$\pi$-return process, without assuming reversibility nor a discrete
space setting.
\end{remark}

From a numerical point of view, the interest of the Hill formula
is that the right-hand side
of~\eqref{EqHillEntranceDist} does not involve
$\cT_{\cB}$ (which is typically very large) anymore.
Accordingly, in the framework of Section~\ref{sec:motiv},
 there is no need to simulate a whole path,
but only reactive trajectories starting from $\pi_{0 | \mathcal A}$.

However, sampling according to $\pi_{0 | \mathcal A}$ can be a very difficult task since, 
in general, no analytical
formula for $\pi_{0 | \mathcal A}$ is
known (see however Remark~\ref{rem:Langevin} on the Langevin process). Therefore, it requires sampling from the stationary state of the initial process, which is computationally
demanding when $\cA$ and $\cB$ are metastable.

Nevertheless, considering the metastability of $\cA$,
it seems intuitively sensible that a good
approximation of $\expecin{\nu_{\mathrm{E}}}{\sum_{n=0}^{\cT_{\cB} -1} f(Y_n) }$ is $
\expecin{\nu_{\mathrm{Q}}}{\sum_{n=0}^{\cT_{\cB} -1} f(Y_n) }$ where $\nu_{\mathrm{Q}}$ is a
QSD of the original process in $\cA$, as
introduced in \Cref{Ssec:Condi} above. We will see in the next section
that this leads to a formula which is even easier to use numerically.
%

\subsection{The Hill relation and quasi-stationary distributions}

It is well-known that a QSD is a fixed point of the map $R$ introduced
in Proposition~\ref{Prop:InvMeasureReturn} (see for example \cite[Proposition
  2.1]{MR1334159} and \cite[Lemma 4.3]{BenaimCloezPanloup} for 
  similar results in slightly different contexts): $\nu_{\mathrm{Q}}$ is a QSD in $\cA$ if and
only if $\nu_{\mathrm{Q}}$ is the stationary law of the $\nu_{\mathrm{Q}}$-return
process. This has a nice consequence when applied to the Hill
relation, as stated in the next result.

\begin{proposition}
\label{Prop:UniquStationaryReturnQSD}
A probability measure $\pi$ is a quasi-stationary distribution for the process $(Y^{\mathrm{Q}}_n)$ killed when leaving $ \cA$ if and only if $\pi$ is a stationary distribution of the $\pi$-return process, i.e.,
\begin{equation}
\pi = R(\pi) .
\end{equation}
Therefore, under \Cref{Ass:K}, there exists a (not necessarily unique) probability measure $\pi$ such that $\pi= R(\pi)$. In particular, if $\nu_{\mathrm{Q}}$ is a QSD, then for any test function $f: \cA \mapsto \R$ the Hill relation writes
\begin{equation}
\expecin{\nu_{\mathrm{Q}}}{\sum_{n=0}^{\cT_{\cB} -1} f(Y_n) } = \frac{ \nu_{\mathrm{Q}}f }{\probin{\nu_{\mathrm{Q}}}{Y_1 \in \cB}}  .
\label{EqHillQSD}
\end{equation}
\end{proposition}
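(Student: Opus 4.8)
The plan is to establish the equivalence ``$\pi$ is a QSD $\iff$ $\pi = R(\pi)$'' first, and then deduce the Hill relation \eqref{EqHillQSD} by specialising \Cref{PropHill}. For the equivalence, I would argue as follows. By \Cref{Lemma:KilledEigen}, $\pi$ is a QSD if and only if $\pi K_{\cA} = \theta\pi$ with $\theta = \probin{\pi}{\cT_{\cB}>1} = 1-p \in (0,1)$ (using the nondegeneracy assumption \eqref{lajcnlajnc}). Now recall the formula \eqref{Eq:RPi} for $R(\pi)$, namely $R(\pi) = \pi(\id_{\cA}-K_{\cA})^{-1}/\expecin{\pi}{\cT_{\cB}}$. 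If $\pi K_{\cA}=\theta\pi$, then $\pi(\id_{\cA}-K_{\cA})^{-1}$ can be computed directly: applying $(\id_{\cA}-K_{\cA})^{-1}$ amounts to summing the Neumann series, $\pi(\id_{\cA}-K_{\cA})^{-1} = \sum_{n\ge 0}\pi K_{\cA}^n = \sum_{n\ge 0}\theta^n\pi = \frac{1}{1-\theta}\pi$. Since $\expecin{\pi}{\cT_{\cB}} = \pi(\id_{\cA}-K_{\cA})^{-1}\mathds{1}_{\cA} = \frac{1}{1-\theta}$ by \Cref{Cor:PoissonCor} (or just by the geometric law in \Cref{Lemma:QSDandKilling}), we get $R(\pi) = \pi$. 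Conversely, if $\pi = R(\pi)$, then $\pi\,\expecin{\pi}{\cT_{\cB}} = \pi(\id_{\cA}-K_{\cA})^{-1}$; applying $(\id_{\cA}-K_{\cA})$ on the right gives $\pi(\id_{\cA}-K_{\cA})\expecin{\pi}{\cT_{\cB}} = \pi$, i.e. $\pi K_{\cA} = \pi\bigl(1 - \tfrac{1}{\expecin{\pi}{\cT_{\cB}}}\bigr)$, so $\pi$ is a left-eigenvector of $K_{\cA}$ with eigenvalue $\theta = 1 - 1/\expecin{\pi}{\cT_{\cB}} \ge 0$, hence a QSD by \Cref{Lemma:KilledEigen}.

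The existence of a fixed point of $R$ is then immediate: \Cref{Lemma:KilledEigen} guarantees that the killed process admits at least one QSD $\nu_{\mathrm{Q}}$ under \Cref{Ass:K}, and by the equivalence just proved $\nu_{\mathrm{Q}} = R(\nu_{\mathrm{Q}})$. Uniqueness is explicitly disclaimed (cf.\ \Cref{rem:unique_QSD}), so nothing more is needed here.

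For the Hill relation \eqref{EqHillQSD}, I would simply invoke \Cref{PropHill} with $\pi = \nu_{\mathrm{Q}}$: it gives $\expecin{\nu_{\mathrm{Q}}}{\sum_{n=0}^{\cT_{\cB}-1}f(Y_n)} = R(\nu_{\mathrm{Q}})f / \probin{R(\nu_{\mathrm{Q}})}{Y_1\in\cB}$, and since $R(\nu_{\mathrm{Q}}) = \nu_{\mathrm{Q}}$ this is exactly $\nu_{\mathrm{Q}}f / \probin{\nu_{\mathrm{Q}}}{Y_1\in\cB}$. The denominator equals $p = \probin{\nu_{\mathrm{Q}}}{\cT_{\cB}=1}\in(0,1)$ by \Cref{Lemma:QSDandKilling}, so it is genuinely nonzero and the right-hand side is well defined.

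The only point requiring a little care — and the step I would treat as the main (minor) obstacle — is the manipulation of the resolvent $(\id_{\cA}-K_{\cA})^{-1}$ on the eigenvector: one must be sure the Neumann-series computation $\pi(\id_{\cA}-K_{\cA})^{-1} = \sum_{n\ge0}\theta^n\pi$ is legitimate, which follows because $\theta = \norm{}$-type spectral data is strictly less than $1$ (here $\theta<1$ holds thanks to \eqref{lajcnlajnc} and \Cref{Lemma:QSDandKilling}, and more robustly because \Cref{Cor:Invert} already establishes that $(\id_{\cA}-K_{\cA})$ is invertible with the stated probabilistic representation). Equivalently, and perhaps cleaner for the write-up, I would avoid the series altogether: from $\pi K_{\cA} = \theta\pi$ one has $\pi(\id_{\cA}-K_{\cA}) = (1-\theta)\pi$, hence $\pi = (1-\theta)\,\pi(\id_{\cA}-K_{\cA})^{-1}$ after applying the inverse (which exists by \Cref{Cor:Invert}); combined with $\expecin{\pi}{\cT_{\cB}} = 1/(1-\theta)$ this yields $R(\pi)=\pi$ with no convergence issue. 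Everything else is bookkeeping with the identities \eqref{Eq:RPi}, \eqref{Eq:MRT} and \Cref{Lemma:KilledEigen}.
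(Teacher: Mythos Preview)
Your proof is correct and follows essentially the same logic as the paper: both reduce the equivalence to the left-eigenvector characterisation of a QSD from \Cref{Lemma:KilledEigen}, and both obtain \eqref{EqHillQSD} by plugging $R(\nu_{\mathrm{Q}})=\nu_{\mathrm{Q}}$ into \Cref{PropHill}. The only difference is that the paper checks the stationarity condition $\pi=\pi K^\pi$ directly from the definition $K^\pi=K_{\cA}+(K_{\cA\cB}\mathds{1}_{\cB})\otimes\pi$, which in one line gives $\pi K_{\cA}=\probin{\pi}{\cT_{\cB}>1}\pi$, whereas you go through the resolvent formula \eqref{Eq:RPi}; your route is slightly longer but equally valid, and your ``cleaner'' variant (applying $(\id_{\cA}-K_{\cA})$ rather than summing the Neumann series) is the right way to write it up.
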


\begin{remark}[Connection with Wald's equation]
A general version of Wald's identity is the following: consider a
sequence of integrable random variables $(X_n)$ with same mean,
 a nonnegative integer-valued and integrable random variable $N$, and
assume that for all $n \ge 0$, $\E[X_n\mathbf{1}_{N\geq n}]=\E[X_n]\prob{N\geq n}$ and  $\sum_{n=1}^\infty\E[|X_n|\mathbf{1}_{N\geq n}]<\infty$, then 
$$\E\left[\sum_{n=1}^NX_n\right]=\E[N]\E[X_1].$$
Interestingly, \eqref{EqHillQSD} might be regarded as a consequence of
this identity, while  the general Hill relation \eqref{eq:Hill} can be considered as a similar relation in a different setting.
\end{remark}

If one admits, as will be justified in
Section~\ref{Sec:Bound}, that $\nu_{\mathrm{Q}}$ is a good approximation of $\nu_{\mathrm{E}}$
when $\mathcal A$ is metastable, the right-hand side of~\eqref{EqHillQSD} is
then a very efficient way to approximate the quantity of
interest~\eqref{jcnljnc}, that is
$$\expecin{\nu_{\mathrm{E}}}{\sum_{n=0}^{\cT_{\cB} -1} f(Y_n) }.$$
 Compared to~\eqref{EqHillEntranceDist}, the
interest is that $\nu_{\mathrm{Q}}$ is easier to sample than $\pi_{0 | \cA}$
since it does not require to observe transitions to $\cB$.
To illustrate the practical interest of~\eqref{EqHillQSD}, Section \ref{sec:diff2} provides some insights on how this can be used for estimating
reaction times for diffusions.

\section{On the biasing error introduced when replacing $\nu_\mathrm{E}$ by $\nu_\mathrm{Q}$}
\label{Sec:Bound}
As explained in the previous section, for numerical purposes, it is
natural to approximate the quantity of interest
$\expecin{\nu_{\mathrm{E}}}{\sum_{n=0}^{\cT_{\cB} -1} f(Y_n) }$ by
$\expecin{\nu_{\mathrm{Q}}}{\sum_{n=0}^{\cT_{\cB} -1} f(Y_n) }$, where
$\nu_{\mathrm{Q}}$ is a QSD as introduced in \Cref{Ssec:Condi}. The
objective of this section is thus to quantify the biasing error
introduced by this approximation. More precisely, we would like a
sharp estimate of the relative biasing error between these two
quantities. Using the Hill relations \eqref{EqHillEntranceDist} and
\eqref{EqHillQSD}, the relative biasing error satisfies, for any test function $f: \cA \rightarrow \R$,
\begin{equation}
\abs{ \frac{ \expecin{\nu_{\mathrm{E}}}{\sum_{n=0}^{\cT_{\cB} -1} f(Y_n) } - \expecin{\nu_{\mathrm{Q}}}{ \sum_{n=0}^{\cT_{\cB} -1} f(Y_n) } } 
 { \expecin{\nu_{\mathrm{E}}}{ \sum_{n=0}^{\cT_{\cB} -1} f(Y_n) } }
}
= \abs{1 - \frac{\probin{\pi_{0 | \mathcal A}}{Y_1 \in \cB} \nu_{\mathrm{Q}} f}{ \probin{\nu_{\mathrm{Q}}}{Y_1 \in \cB} \pi_{0 | \mathcal A} f } }  .
\label{EqExactBiaisTrace}
\end{equation}
Intuitively, one expects that the relative biasing error will be small if the
time needed to reach $\cB$ is much longer than the time needed to
relax to a local equilibrium within $\cA$, namely the time for the
process conditioned to stay in $\cA$ to reach the quasi-stationary
distribution. 

Therefore,  the main idea is to introduce two timescales: the
timescale to observe a transition from $\cA$ to $\cB$, and the timescale
to reach the QSD starting from $\nu_{\mathrm{E}}$, denoted by $T^{\mathrm{E}}_\mathrm{Q}$. This is the subject of
Sections~\ref{sec:timescales1} and~\ref{sec:timescales2}.
Once this is done, we show in Section~\ref{sec:errorbound} that the
biasing error~\eqref{EqExactBiaisTrace} is small
when the former is much larger than the latter. Finally, Section~\ref{sec:estimTQ} 
proposes two ways to estimate~$T^{\mathrm{E}}_\mathrm{Q}$.

\subsection{A lower bound for the reaction time}\label{sec:timescales1}
Let us introduce 
\begin{equation}\label{eq:p+}
p^+= \supSur{x \in \cA}{\probin{x}{Y_1 \in \cB}}.
\end{equation}
 Under \Cref{Ass:K}, we know from \Cref{Lemma:} that $p^+>0$.
 
\begin{lemma}
\label{kajclajc}
Under \Cref{Ass:K}, for all $x \in \cA$, one has
\begin{equation}
\frac{1}{p^+}  \leq \expecin{x}{\cT_{\cB}}. 
\end{equation}
\end{lemma}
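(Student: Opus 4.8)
The plan is to obtain the bound directly from the Poisson-equation representation of $\expecin{x}{\cT_{\cB}}$ given in Corollary~\ref{Cor:PoissonCor}, combined with the elementary one-step estimate on the probability of leaving $\cA$. Recall from \eqref{Eq:MRT} that for all $x\in\cA$,
$$\expecin{x}{\cT_{\cB}}=(\id_{\cA}-K_{\cA})^{-1}\mathds{1}_{\cA}(x)=\expecin{x}{\sum_{n=0}^{\cT_{\cB}-1}1}.$$
So it suffices to show $\expecin{x}{\cT_{\cB}}\geq 1/p^+$ for every starting point $x\in\cA$.

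The cleanest route I would take is a renewal/one-step argument on the hitting time. Fix $x\in\cA$. Conditioning on the first step, either $Y_1\in\cB$ — in which case $\cT_{\cB}=1$ — or $Y_1\in\cA$, in which case, by the strong Markov property, $\cT_{\cB}=1+\cT_{\cB}\circ\vartheta_1$ with $\cT_{\cB}\circ\vartheta_1\geq 0$. Hence $\cT_{\cB}\geq 1$ always, and more precisely $\expecin{x}{\cT_{\cB}}\geq 1+\expecin{x}{\mathds{1}_{Y_1\in\cA}\,\expecin{Y_1}{\cT_{\cB}}}$. Writing $m(x):=\expecin{x}{\cT_{\cB}}$, this is $m(x)\geq 1+K_{\cA}m(x)$; since $m\geq 1$ on $\cA$, we get $m(x)\geq 1+K_{\cA}\mathds{1}_{\cA}(x)\cdot\inf_{\cA}m$, and iterating (or directly summing the geometric-type bound) yields $\inf_{\cA}m\geq 1/(1-\sup_{\cA}K_{\cA}\mathds{1}_{\cA})=1/p^+$, using $K_{\cA}\mathds{1}_{\cA}(x)=\probin{x}{Y_1\in\cA}=1-\probin{x}{Y_1\in\cB}\leq 1-\bigl(1-p^+\bigr)$... wait, one must be careful: $\probin{x}{Y_1\in\cB}\leq p^+$ gives $K_{\cA}\mathds{1}_{\cA}(x)\geq 1-p^+$, which is the wrong direction for a lower bound on $m$.

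So instead I would argue via the upper tail directly: for all $x\in\cA$ and all $n\geq 1$,
$$\probin{x}{\cT_{\cB}=n}\leq \probin{x}{\cT_{\cB}>n-1,\,Y_n\in\cB}=\expecin{x}{\mathds{1}_{\cT_{\cB}>n-1}\,\probin{Y_{n-1}}{Y_1\in\cB}}\leq p^+\,\probin{x}{\cT_{\cB}>n-1},$$
using the Markov property at time $n-1$ and the definition \eqref{eq:p+} of $p^+$. Since $\{\cT_{\cB}>n-1\}\supseteq\{\cT_{\cB}\geq n\}=\bigcup_{k\geq n}\{\cT_{\cB}=k\}$, summing over $n$ and telescoping, or more simply writing $\expecin{x}{\cT_{\cB}}=\sum_{n\geq 0}\probin{x}{\cT_{\cB}>n}$ and $\sum_{n\geq 1}\probin{x}{\cT_{\cB}=n}=1$ together with $1=\sum_n\probin{x}{\cT_{\cB}=n}\leq p^+\sum_{n\geq 1}\probin{x}{\cT_{\cB}>n-1}=p^+\sum_{m\geq 0}\probin{x}{\cT_{\cB}>m}=p^+\,\expecin{x}{\cT_{\cB}}$, which gives exactly $\expecin{x}{\cT_{\cB}}\geq 1/p^+$. (Here finiteness of $\cT_{\cB}$ $\fP_x$-a.s., so that $\sum_n\probin{x}{\cT_{\cB}=n}=1$, follows from \eqref{Eq:HittingTimes} in Lemma~\ref{Lemma:}.)

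The main obstacle, such as it is, is purely bookkeeping: making sure the Markov-property step $\probin{x}{\cT_{\cB}=n}\leq p^+\,\probin{x}{\cT_{\cB}>n-1}$ is applied with the correct conditioning (the event $\{\cT_{\cB}>n-1\}$ is $\sigma(Y_0,\dots,Y_{n-1})$-measurable, so one conditions on $\mathcal F_{n-1}$ and bounds $\probin{Y_{n-1}}{Y_1\in\cB}$ by $p^+$ since $Y_{n-1}\in\cA$ on that event), and handling the case $\expecin{x}{\cT_{\cB}}=\infty$ trivially (the inequality is then vacuous, though in fact $\expecin{x}{\cT_{\cB}}<\infty$ by Corollary~\ref{Cor:PoissonCor}). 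No real difficulty beyond that.
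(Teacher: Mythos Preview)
Your final argument is correct and is essentially the paper's proof: both rest on the Markov-property bound $\pcondin{x}{\cT_{\cB}=n}{\cT_{\cB}>n-1}\le p^+$. The paper uses this to get $\probin{x}{\cT_{\cB}>k}\ge(1-p^+)^k$ and sums the geometric series, while you sum $\probin{x}{\cT_{\cB}=n}\le p^+\probin{x}{\cT_{\cB}>n-1}$ over $n$ to reach $1\le p^+\expecin{x}{\cT_{\cB}}$ directly; the two are equivalent reorganizations of the same estimate.
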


In the following, we will use $1/{p^+}$ as a measure of the time for a
transition from $\cA$ to~$\cB$. The previous lemma only shows that $1/{p^+}$ is a lower bound of
the mean reaction time to~$\cB$. In
Section~\ref{sec:toymodel2}, we will check on a simple example that $1/p^+$ indeed yields to a sharp
estimate of the biasing error~\eqref{EqExactBiaisTrace}, as stated in
Theorem~\ref{Prop:EqGene}, in the sense that it cannot be replaced by
$1/\probin{\nu_{\mathrm{Q}}}{Y_1 \in \cB}$ or
$1/\probin{\pi_{0 | \mathcal A}}{Y_1 \in \cB}$.

\subsection{Relaxation time to a QSD}\label{sec:timescales2}

\paragraph{Definition of the relaxation time $T^{\mathrm{E}}_\mathrm{Q}$ to a QSD.}

Under~\Cref{Ass:K}, let $\nu_\mathrm{Q}$ be a QSD for the process killed when leaving $\cA$. We define the
relaxation time to $\nu_\mathrm{Q}$ through the
$\nu_\mathrm{Q}$-return process $(Y^{\nu_{\mathrm{Q}}}_n)$ starting from $\nu_{\mathrm{E}}$. Specifically, let us define the signed kernel~$H_{\mathrm{Q}}$ by 
\begin{equation}
H_{\mathrm{Q}}= (\id_{\cA}-K_{\cA})^{-1}(\id_{\cA}-\mathds{1}_{\cA}\otimes \nu_{\mathrm{Q}}) ,
\label{EqKernelHQOp}
\end{equation}
or, equivalently, for any test function $f: \cA \to \R$ and any $x \in \cA$:
\begin{equation}
H_{\mathrm{Q}}f(x) = \expecin{x}{ \sum_{n = 0}^{\cT_{\cB}-1} \braces{ f(Y_n) - \nu_{\mathrm{Q}}f }}.\label{EqHfReturn}
\end{equation}
The equivalence between \eqref{EqKernelHQOp} and \eqref{EqHfReturn} is
again a consequence of
Corollary~\ref{Cor:Invert}. The relaxation time to the QSD $\nu_{\mathrm{Q}}$ in $\cA$ starting from
$\nu_{\mathrm{E}}$ is then defined as
 \begin{equation}\label{Eq:TQE}
T^{\mathrm{E}}_{\mathrm{Q}}= \norm{\nu_{\mathrm{E}} H_{\mathrm{Q}}}.
 \end{equation}
Let us also define a uniform relaxation time to the QSD by
 \begin{equation}\label{Eq:TQ}
T_{\mathrm{Q}}= \norm{H_{\mathrm{Q}}}_\infty=\sup_{x \in \cA} \norm{H_{\mathrm{Q}}(x,\cdot)}<\infty.
 \end{equation}
  
\begin{proposition}\label{aicjapzcjpzc}
Under~\Cref{Ass:K},  we have
\begin{equation}\label{EqHfReturnprime}
H_{\mathrm{Q}}f(x) = \sum_{n=0}^\infty  \expecin{x}{ f(Y^{\nu_{\mathrm{Q}}}_n) - \nu_{\mathrm{Q}}f },
\end{equation} 
and
$$T^{\mathrm{E}}_{\mathrm{Q}}\le T_{\mathrm{Q}}\leq 2 \norm{(\id_{\cA}-K_{\cA})^{-1}}_{\infty}.$$
\end{proposition}

\paragraph{Why is  $T^{\mathrm{E}}_\mathrm{Q}$ a sensible measure of the
  relaxation time to a QSD?}

The next result shows that the time for the  process
conditioned to stay in $\cA$ to converge to the QSD is closely related
to the time for the $\nu_\mathrm{Q}$-return process to reach its equilibrium.

\begin{lemma}
\label{Lemma:NuQConditioned}
For any probability measure $\mu$ on $\mathcal A$, any test function $f$, and all $n\geq 0$, one has 
\begin{equation}\label{eq:NuQConditioned}
\expecin{\mu}{f(Y^{\nu_{\mathrm{Q}}}_n) - \nu_{\mathrm{Q}}f } = \bigpar{ \econdin{\mu}{ f(Y_n) }{ \cT_{\cB}>n} - \nu_{\mathrm{Q}}f } \probin{\mu}{ \cT_{\cB}>n} .
\end{equation}
As a consequence, denoting  $\cL^{\mu }( \cdot )$ the law of a
process with initial distribution $\mu$, we get
\begin{equation}
\norm{ \cL^{\mu }(Y^{\nu_{\mathrm{Q}}}_n) - \nu_{\mathrm{Q}}}  \leq \norm{ \cL^{\mu }(Y_n \vert \cT_{\cB}> n) - \nu_{\mathrm{Q}}}.
\end{equation}
Using these estimates with $\mu=\nu_{\mathrm{E}}$, one obtains in particular
 $$T^{\mathrm{E}}_\mathrm{Q} \le \sum_{n=0}^\infty \norm{ \cL^{\nu_{\mathrm{E}} }(Y_n \vert \cT_{\cB}> n) - \nu_{\mathrm{Q}}}.$$
\end{lemma}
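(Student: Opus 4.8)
The plan is to reduce the whole statement to the single measure identity
\[
\cL^{\mu}(Y^{\nu_{\mathrm{Q}}}_n) \;=\; \mu K_{\cA}^n \;+\; \bigpar{1 - \mu K_{\cA}^n \mathds{1}_{\cA}}\,\nu_{\mathrm{Q}}, \qquad n \ge 0,
\]
valid for every probability measure $\mu$ on $\cA$, which says that the $\nu_{\mathrm{Q}}$-return process at time $n$ is the (sub-probability) conditioned law of $Y_n$ on $\{\cT_{\cB}>n\}$ plus the leftover mass placed on $\nu_{\mathrm{Q}}$ — the point being that as soon as the return process has hit $\cB$ and been redistributed, it stays distributed according to $\nu_{\mathrm{Q}}$, since $\nu_{\mathrm{Q}}$ is invariant for $K^{\nu_{\mathrm{Q}}}$. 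Granting this identity, \eqref{eq:NuQConditioned} follows at once: applying it to a test function $f$, subtracting $\nu_{\mathrm{Q}}f$, and using $K_{\cA}^n f(x) = \expecin{x}{f(Y_n)\mathbf{1}_{\cT_{\cB}>n}}$ together with $\mu K_{\cA}^n\mathds{1}_{\cA} = \probin{\mu}{\cT_{\cB}>n}$ (a probabilistic reading of \eqref{EqProbinGE}), one gets $\cL^{\mu}(Y^{\nu_{\mathrm{Q}}}_n)f - \nu_{\mathrm{Q}}f = \expecin{\mu}{f(Y_n)\mathbf{1}_{\cT_{\cB}>n}} - \probin{\mu}{\cT_{\cB}>n}\,\nu_{\mathrm{Q}}f$, which is exactly the right-hand side of \eqref{eq:NuQConditioned} after writing $\expecin{\mu}{f(Y_n)\mathbf{1}_{\cT_{\cB}>n}} = \econdin{\mu}{f(Y_n)}{\cT_{\cB}>n}\,\probin{\mu}{\cT_{\cB}>n}$ (with the usual convention that this product is just $\expecin{\mu}{f(Y_n)\mathbf{1}_{\cT_{\cB}>n}}=0$ when $\probin{\mu}{\cT_{\cB}>n}=0$, in which case the displayed identity reads $\cL^{\mu}(Y^{\nu_{\mathrm{Q}}}_n)=\nu_{\mathrm{Q}}$).

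I would prove the measure identity by induction on $n$. For $n=0$, $Y^{\nu_{\mathrm{Q}}}_0=Y_0$, so the left-hand side is $\mu$, and since $\mu\mathds{1}_{\cA}=1$ the right-hand side is $\mu$ as well. For the inductive step, write $q_n := \cL^{\mu}(Y^{\nu_{\mathrm{Q}}}_n)$ and use the $\nu_{\mathrm{Q}}$-return kernel $K^{\nu_{\mathrm{Q}}} = K_{\cA} + (K_{\cA\cB}\mathds{1}_{\cB})\otimes\nu_{\mathrm{Q}}$ to get $q_{n+1} = q_nK_{\cA} + \bigpar{q_nK_{\cA\cB}\mathds{1}_{\cB}}\nu_{\mathrm{Q}}$. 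Two simplifications are then applied: since $q_n$ is a probability on $\cA$, \eqref{Eq:PropKAOverOne} gives $q_nK_{\cA\cB}\mathds{1}_{\cB} = 1 - q_nK_{\cA}\mathds{1}_{\cA}$; and since $\nu_{\mathrm{Q}}$ is a QSD, Lemma~\ref{Lemma:KilledEigen} gives $\nu_{\mathrm{Q}}K_{\cA} = \theta\,\nu_{\mathrm{Q}}$ for some $\theta\in[0,1]$. Substituting the induction hypothesis for $q_n$ and expanding, the contributions carrying $\theta\bigpar{1-\mu K_{\cA}^n\mathds{1}_{\cA}}\nu_{\mathrm{Q}}$ cancel and one lands on $q_{n+1} = \mu K_{\cA}^{n+1} + \bigpar{1-\mu K_{\cA}^{n+1}\mathds{1}_{\cA}}\nu_{\mathrm{Q}}$, as wanted.

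For the total variation consequence, I would take $f$ with $\norm{f}_\infty\le1$ in \eqref{eq:NuQConditioned}; since $0\le\probin{\mu}{\cT_{\cB}>n}\le1$, this gives $\cL^{\mu}(Y^{\nu_{\mathrm{Q}}}_n)f - \nu_{\mathrm{Q}}f \le \max\bigpar{0,\;\cL^{\mu}(Y_n\vert\cT_{\cB}>n)f-\nu_{\mathrm{Q}}f} \le \norm{\cL^{\mu}(Y_n\vert\cT_{\cB}>n)-\nu_{\mathrm{Q}}}$, and taking the supremum over such $f$ yields $\norm{\cL^{\mu}(Y^{\nu_{\mathrm{Q}}}_n)-\nu_{\mathrm{Q}}}\le\norm{\cL^{\mu}(Y_n\vert\cT_{\cB}>n)-\nu_{\mathrm{Q}}}$. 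Finally, specialising to $\mu=\nu_{\mathrm{E}}$ and integrating the representation \eqref{EqHfReturnprime} of $H_{\mathrm{Q}}$ against $\nu_{\mathrm{E}}$ (the interchange of sum and integral being legitimate by Fubini, since $\sum_{n\ge0}\probin{\nu_{\mathrm{E}}}{\cT_{\cB}>n}=\expecin{\nu_{\mathrm{E}}}{\cT_{\cB}}<\infty$ thanks to Corollary~\ref{Cor:Invert}) gives $\nu_{\mathrm{E}}H_{\mathrm{Q}}f = \sum_{n\ge0}\bigpar{\cL^{\nu_{\mathrm{E}}}(Y^{\nu_{\mathrm{Q}}}_n)f-\nu_{\mathrm{Q}}f}$ for every test function $f$; for $\norm{f}_\infty\le1$ this is at most $\sum_{n\ge0}\norm{\cL^{\nu_{\mathrm{E}}}(Y^{\nu_{\mathrm{Q}}}_n)-\nu_{\mathrm{Q}}}$, and combining with the previous inequality and \eqref{Eq:TQE} we obtain $T^{\mathrm{E}}_{\mathrm{Q}} = \norm{\nu_{\mathrm{E}}H_{\mathrm{Q}}}\le\sum_{n\ge0}\norm{\cL^{\nu_{\mathrm{E}}}(Y_n\vert\cT_{\cB}>n)-\nu_{\mathrm{Q}}}$. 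Everything here is elementary; the only mildly delicate points are the bookkeeping of the cancellation in the inductive step and checking that the degenerate case $\probin{\mu}{\cT_{\cB}>n}=0$ is covered by the stated convention, so I do not foresee a genuine obstacle.
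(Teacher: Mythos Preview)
Your proof is correct. Both your argument and the paper's establish the same key identity, namely
\[
\cL^{\mu}(Y^{\nu_{\mathrm{Q}}}_n) = \mu K_{\cA}^n + \bigpar{1 - \mu K_{\cA}^n \mathds{1}_{\cA}}\,\nu_{\mathrm{Q}},
\]
and then read off \eqref{eq:NuQConditioned} and the total variation bound from it. The difference is purely in how this identity is obtained: the paper splits $\expecin{\mu}{f(Y^{\nu_{\mathrm{Q}}}_n)}$ according to $\{\cT_{\cB}>n\}$ versus $\{\cT_{\cB}\le n\}$ and uses the strong Markov property at $\cT_{\cB}$ together with the invariance $\expecin{\nu_{\mathrm{Q}}}{f(Y^{\nu_{\mathrm{Q}}}_k)}=\nu_{\mathrm{Q}}f$ to evaluate the second piece directly; you instead run an induction on $n$ through the kernel $K^{\nu_{\mathrm{Q}}}=K_{\cA}+(K_{\cA\cB}\mathds{1}_{\cB})\otimes\nu_{\mathrm{Q}}$, using the eigenrelation $\nu_{\mathrm{Q}}K_{\cA}=\theta\nu_{\mathrm{Q}}$ to make the $\theta$-terms cancel. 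The paper's route is a one-line probabilistic computation that makes the intuition (``once redistributed, you are forever at $\nu_{\mathrm{Q}}$'') transparent without any induction; your route is slightly more algebraic but has the minor advantage of never invoking the strong Markov property explicitly. Either way the remaining steps (bounding by $\probin{\mu}{\cT_{\cB}>n}\le 1$, then summing via \eqref{EqHfReturnprime}) are identical in content.
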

Let us comment on the result of Lemma~\ref{Lemma:NuQConditioned}, see Equation~\eqref{eq:NuQConditioned}. It
shows that the convergence of the $\nu_{\mathrm{Q}}$-return process to $\nu_{\mathrm{Q}}$
occurs if the process $(Y_n)_{n \ge 0}$ conditioned to stay in $\mathcal A$ converges to
$\nu_{\mathrm{Q}}$, or if the process  $(Y_n)_{n \ge 0}$ reaches
$\mathcal B$ (all these processes starting from $\mu$). In
our context, we consider metastable situations where transitions to
$\mathcal B$ are rare: $ \probin{\mu}{ \cT_{\cB}>n} \ge (1-p^+)^n$ with
$p^+ \ll 1$. In this case, the convergence of the $\nu_{\mathrm{Q}}$-return process to $\nu_{\mathrm{Q}}$
is more related to the distance to
$\nu_{\mathrm{Q}}$ of the process $(Y_n)_{n \ge 0}$ conditioned to stay in
$\mathcal A$, than to the probability for the process  $(Y_n)_{n \ge 0}$ to reach $\mathcal B$.

\begin{remark}[Interpretation of $T_{\mathrm{Q}}$]
Let us recall that a randomized stopping time for $(Y^{\nu_{\mathrm{Q}}}_n)$ is a stopping time with respect to a possibly enlarged
version of the filtration generated by the random variables $(Y^{\nu_{\mathrm{Q}}}_n)$. Then, following~\cite{aldous1997mixing}, define for $ 0<c<1$
 \begin{equation}
 \label{Def:TStop}
 T_{\mathrm{stop}}(c) = \sup_{x}\inf_{T}\{\expecin{x}{T} \text{where $T$ is a randomized stopping time s.t.\ }\norm{\cL^x{(Y^{\nu_{\mathrm{Q}}}_T)} - \nu_{\mathrm{Q}}}  \leq c\} .
 \end{equation}
From \cite[Theorem~1]{aldous1997mixing}, it is known that if $T_{\mathrm{Q}} $ or $ T_{\mathrm{stop}}(c)$ (for some $0<c<1$) is finite then
$T_{\mathrm{Q}}$ and $ T_{\mathrm{stop}}(c)$ are equivalent in the sense that
\begin{equation}
T_{\mathrm{stop}}(c)  \leq\frac{4}{c^2}T_{\mathrm{Q}}< \frac{8}{c^2 (1-c)} T_{\mathrm{stop}}(c) .
\label{Eq:RelTStop}
\end{equation}
As a consequence, $T_{\mathrm{Q}}$ quantifies the time for the
$\nu_{\mathrm{Q}}$-return process to converge to $\nu_{\mathrm{Q}}$,
uniformly over the initial condition.
\end{remark}

\subsection{A general bound on the relative biasing error}\label{sec:errorbound}

In this section, we prove Theorem~\ref{Prop:EqGene} which gives an upper bound on
the relative biasing error introduced in
\eqref{EqExactBiaisTrace} by using the two time scales
introduced above: $1/p^+$ as a measure of the time to observe a
transition from $\cA$ to $\cB$, and $T^{\mathrm{E}}_\mathrm{Q}$ as a measure of the
time to relax to the QSD. The proof of Theorem~\ref{Prop:EqGene} relies on the following estimate of the difference between a quasi-stationary distribution and the conditional stationary distribution.

\begin{lemma}
\label{lzeknvclakev}
Let $\pi_{0 | \mathcal A}$ be the stationary distribution $\pi_0$
conditioned to $ \cA$, and $\nu_{\mathrm{Q}}$ be a QSD for the killed process $(Y^{\mathrm{Q}}_n)$. Then 
 one has
\begin{equation}
\norm{\pi_{0 | \mathcal A}-\nu_{\mathrm{Q}} }   = \probin{\pi_{0 | \mathcal A}}{Y_1 \in \cB} T^{\mathrm{E}}_{\mathrm{Q}} ,
\label{EqDiffMeasure}
\end{equation}
where $T^{\mathrm{E}}_{\mathrm{Q}}$ denotes the relaxation time defined by \eqref{Eq:TQE}.
\end{lemma}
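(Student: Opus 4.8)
The strategy is to compute $\nu_{\mathrm{E}} H_{\mathrm{Q}}$ in closed form and then read off its total variation norm. First I would start from the definition~\eqref{EqKernelHQOp} of the signed kernel $H_{\mathrm{Q}}$, which gives
\[
\nu_{\mathrm{E}} H_{\mathrm{Q}} = \nu_{\mathrm{E}}(\id_{\cA}-K_{\cA})^{-1}\bigl(\id_{\cA}-\mathds{1}_{\cA}\otimes\nu_{\mathrm{Q}}\bigr).
\]
Then I would invoke relation~\eqref{kajcpcj} (equivalently Proposition~\ref{Prop:ReactiveEntranceHarris}), namely $\nu_{\mathrm{E}}(\id_{\cA}-K_{\cA})^{-1} = \expecin{\nu_{\mathrm{E}}}{\cT_{\cB}}\,\pi_{0 | \mathcal A}$. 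Here $\expecin{\nu_{\mathrm{E}}}{\cT_{\cB}}$ is finite, since the function $x\mapsto\expecin{x}{\cT_{\cB}}=(\id_{\cA}-K_{\cA})^{-1}\mathds{1}_{\cA}(x)$ is a bounded test function by Corollaries~\ref{Cor:Invert} and~\ref{Cor:PoissonCor}, and it is strictly positive because $\cT_{\cB}\ge 1$ $\nu_{\mathrm{E}}$-almost surely ($\nu_{\mathrm{E}}$ being supported on $\cA$). Substituting, and using that for any probability measure $\mu$ on $\cA$ the outer product acts by $\mu(\mathds{1}_{\cA}\otimes\nu_{\mathrm{Q}}) = (\mu\mathds{1}_{\cA})\,\nu_{\mathrm{Q}} = \nu_{\mathrm{Q}}$, I obtain
\[
\nu_{\mathrm{E}} H_{\mathrm{Q}} = \expecin{\nu_{\mathrm{E}}}{\cT_{\cB}}\bigl(\pi_{0 | \mathcal A}-\nu_{\mathrm{Q}}\bigr).
\]

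Since $\expecin{\nu_{\mathrm{E}}}{\cT_{\cB}}>0$ is a scalar, it factors out of the total variation norm, so the definition~\eqref{Eq:TQE} of $T^{\mathrm{E}}_{\mathrm{Q}}$ gives
\[
T^{\mathrm{E}}_{\mathrm{Q}} = \norm{\nu_{\mathrm{E}} H_{\mathrm{Q}}} = \expecin{\nu_{\mathrm{E}}}{\cT_{\cB}}\,\norm{\pi_{0 | \mathcal A}-\nu_{\mathrm{Q}}}.
\]
It then remains to identify $1/\expecin{\nu_{\mathrm{E}}}{\cT_{\cB}}$ with $\probin{\pi_{0 | \mathcal A}}{Y_1\in\cB}$. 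This is precisely the classical Hill relation applied to $\pi=\nu_{\mathrm{E}}$: by Corollary~\ref{Cor:ReactiveEntranceReturn} with $f=\mathds{1}_{\cA}$ (using $R(\nu_{\mathrm{E}})=\pi_{0 | \mathcal A}$ and $\pi_{0 | \mathcal A}\mathds{1}_{\cA}=1$) one has $\expecin{\nu_{\mathrm{E}}}{\cT_{\cB}} = 1/\probin{\pi_{0 | \mathcal A}}{Y_1\in\cB}$. Combining the last two displays yields
\[
\norm{\pi_{0 | \mathcal A}-\nu_{\mathrm{Q}}} = \probin{\pi_{0 | \mathcal A}}{Y_1\in\cB}\,T^{\mathrm{E}}_{\mathrm{Q}},
\]
which is the claimed identity.

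The argument is purely algebraic, so I do not expect a genuine obstacle. The only steps that need a little care are the bookkeeping with the outer product $\mathds{1}_{\cA}\otimes\nu_{\mathrm{Q}}$ combined with the fact that $\pi_{0 | \mathcal A}$ and $\nu_{\mathrm{E}}$ are \emph{probability} measures on $\cA$ (so that $\pi_{0 | \mathcal A}\mathds{1}_{\cA}=1$), and the elementary remark that the nonnegative scalar $\expecin{\nu_{\mathrm{E}}}{\cT_{\cB}}$ — finite and positive as noted above — can be pulled out of $\norm{\cdot}$.
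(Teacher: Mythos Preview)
Your proof is correct and essentially the same as the paper's: both derive the identity $\nu_{\mathrm{E}} H_{\mathrm{Q}} = \expecin{\nu_{\mathrm{E}}}{\cT_{\cB}}\,(\pi_{0|\cA}-\nu_{\mathrm{Q}})$ (equivalently $\pi_{0|\cA}-\nu_{\mathrm{Q}} = \probin{\pi_{0|\cA}}{Y_1\in\cB}\,\nu_{\mathrm{E}} H_{\mathrm{Q}}$) and then take the total variation norm. The only cosmetic difference is direction: the paper starts from $\pi_{0|\cA}-\nu_{\mathrm{Q}}$, inserts $(\id_{\cA}-K_{\cA})(\id_{\cA}-K_{\cA})^{-1}$, and uses~\eqref{EqLinkDist} to get the scalar $\probin{\pi_{0|\cA}}{Y_1\in\cB}$ directly, whereas you start from $\nu_{\mathrm{E}} H_{\mathrm{Q}}$, use the equivalent relation~\eqref{kajcpcj}, and then convert $\expecin{\nu_{\mathrm{E}}}{\cT_{\cB}}$ via the Hill relation---one extra (but trivial) step.
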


We are now in a position to state the main mathematical result of this
work. 
\begin{theorem}
\label{Prop:EqGene}
Let $\nu_\mathrm{Q}$ be a QSD for the process killed when leaving $\cA$
and let us assume that $p^+T^{\mathrm{E}}_{\mathrm{Q}}<1$, where $p^+$ is defined
by~\eqref{eq:p+} and $T^{\mathrm{E}}_{\mathrm{Q}}$
by~\eqref{Eq:TQE}. Then the relative biasing error introduced in \eqref{EqExactBiaisTrace} is bounded as follows:
\begin{equation}
\abs{ \frac{ \expecin{\nu_{\mathrm{E}} }{ \sum_{n=0}^{\cT_{\cB} -1} f(Y_n) } - \expecin{\nu_{\mathrm{Q}}}{ \sum_{n=0}^{\cT_{\cB} -1} f(Y_n) } } {\expecin{\nu_{\mathrm{E}}}{\sum_{n=0}^{\cT_{\cB} -1} f(Y_n) }}}  \leq \frac{p^+T^{\mathrm{E}}_{\mathrm{Q}}}{1-p^+T^{\mathrm{E}}_{\mathrm{Q}}} \left( 1 + \frac{ \norm{f}_{\infty}}{|\pi_{0 | \mathcal A} f| }\right) .
\label{EqGenBound}
\end{equation}
\end{theorem}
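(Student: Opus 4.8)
The plan is to start from the exact identity for the relative biasing error already recorded in~\eqref{EqExactBiaisTrace}: writing $p_{\mathrm{E}}:=\probin{\pi_{0 | \mathcal A}}{Y_1\in\cB}$ and $p_{\mathrm{Q}}:=\probin{\nu_{\mathrm{Q}}}{Y_1\in\cB}$, the left-hand side of~\eqref{EqGenBound} equals $\bigl|\,1-p_{\mathrm{E}}\,\nu_{\mathrm{Q}}f/(p_{\mathrm{Q}}\,\pi_{0 | \mathcal A}f)\,\bigr|$. I would control the two ``perturbations'' appearing here, namely $\pi_{0 | \mathcal A}f-\nu_{\mathrm{Q}}f$ and $p_{\mathrm{E}}-p_{\mathrm{Q}}$, through the signed measure $\mu:=\pi_{0 | \mathcal A}-\nu_{\mathrm{Q}}$ on $\cA$ (which satisfies $\mu(\cA)=0$), using Lemma~\ref{lzeknvclakev}. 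Setting $g:=K_{\cA\cB}\mathds{1}_{\cB}$, one has by~\eqref{alzj} that $p_{\mathrm{E}}=\pi_{0 | \mathcal A}g$ and $p_{\mathrm{Q}}=\nu_{\mathrm{Q}}g$, hence $p_{\mathrm{E}}-p_{\mathrm{Q}}=\mu g$; moreover $0\le g\le p^+$ pointwise by the very definition~\eqref{eq:p+} of $p^+$, so in particular $p_{\mathrm{E}}=\pi_{0 | \mathcal A}g\le p^+$, and Lemma~\ref{lzeknvclakev} gives $\norm{\mu}=p_{\mathrm{E}}\,T^{\mathrm{E}}_{\mathrm{Q}}\le p^+T^{\mathrm{E}}_{\mathrm{Q}}<1$.

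Next I would carry out elementary algebra followed by two crude norm estimates. Substituting $\nu_{\mathrm{Q}}f=\pi_{0 | \mathcal A}f-\mu f$ and $p_{\mathrm{Q}}=p_{\mathrm{E}}-\mu g$ and expanding, the numerator $p_{\mathrm{Q}}\,\pi_{0 | \mathcal A}f-p_{\mathrm{E}}\,\nu_{\mathrm{Q}}f$ simplifies to $p_{\mathrm{E}}\,\mu f-(\mu g)\,\pi_{0 | \mathcal A}f$, so that the relative biasing error equals
\[
\biggl|\,\frac{p_{\mathrm{E}}}{p_{\mathrm{Q}}}\cdot\frac{\mu f}{\pi_{0 | \mathcal A}f}-\frac{\mu g}{p_{\mathrm{Q}}}\,\biggr|.
\]
From the definition~\eqref{Eq:DefTV} of the total variation norm one has $|\mu f|\le\norm{f}_{\infty}\norm{\mu}$ and $|\mu g|\le\norm{g}_{\infty}\norm{\mu}=p^+\norm{\mu}$. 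Since $|\mu g|\le p^+\norm{\mu}=p^+p_{\mathrm{E}}T^{\mathrm{E}}_{\mathrm{Q}}$ and $p_{\mathrm{Q}}=p_{\mathrm{E}}-\mu g$, we get $p_{\mathrm{Q}}\ge p_{\mathrm{E}}\bigl(1-p^+T^{\mathrm{E}}_{\mathrm{Q}}\bigr)>0$, whence $p_{\mathrm{E}}/p_{\mathrm{Q}}\le\bigl(1-p^+T^{\mathrm{E}}_{\mathrm{Q}}\bigr)^{-1}$ and $|\mu g|/p_{\mathrm{Q}}\le p^+T^{\mathrm{E}}_{\mathrm{Q}}/\bigl(1-p^+T^{\mathrm{E}}_{\mathrm{Q}}\bigr)$. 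Combining these with $|\mu f|/|\pi_{0 | \mathcal A}f|\le\norm{f}_{\infty}\norm{\mu}/|\pi_{0 | \mathcal A}f|\le\norm{f}_{\infty}\,p^+T^{\mathrm{E}}_{\mathrm{Q}}/|\pi_{0 | \mathcal A}f|$ and applying the triangle inequality to the displayed expression yields exactly the bound~\eqref{EqGenBound}.

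I do not expect any genuine obstacle in this argument: once Lemma~\ref{lzeknvclakev} is available --- which is where the analytic content sits, via the kernel $H_{\mathrm{Q}}$ of Proposition~\ref{aicjapzcjpzc} --- everything reduces to bookkeeping. The points that require care are: respecting the total variation convention used in the paper (no factor $1/2$ in~\eqref{Eq:DefTV}); the elementary but decisive observation $p_{\mathrm{E}}=\pi_{0 | \mathcal A}g\le p^+$, which is what lets the bound be phrased in terms of $p^+$ rather than of $p_{\mathrm{E}}$ or $p_{\mathrm{Q}}$; and checking $p_{\mathrm{Q}}>0$ so that no division by zero occurs --- this is precisely where the hypothesis $p^+T^{\mathrm{E}}_{\mathrm{Q}}<1$ is used. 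One also uses implicitly that $\pi_{0 | \mathcal A}f\neq 0$, which is unavoidable since otherwise the relative error is undefined, and which is consistent with the appearance of $|\pi_{0 | \mathcal A}f|$ in the denominator of the right-hand side of~\eqref{EqGenBound}.
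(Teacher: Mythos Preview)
Your proposal is correct and follows essentially the same route as the paper's proof: both start from the exact identity~\eqref{EqExactBiaisTrace}, invoke Lemma~\ref{lzeknvclakev} to get $\norm{\pi_{0|\cA}-\nu_{\mathrm{Q}}}=p_{\mathrm{E}}T^{\mathrm{E}}_{\mathrm{Q}}\le p^+T^{\mathrm{E}}_{\mathrm{Q}}$, apply this once with the function $g=K_{\cA\cB}\mathds{1}_{\cB}$ (noting $\norm{g}_\infty=p^+$) and once with $f$, and combine via the triangle inequality. The paper writes the decomposition as $\bigl|1-p_{\mathrm{E}}/p_{\mathrm{Q}}\bigr|+(p_{\mathrm{E}}/p_{\mathrm{Q}})\bigl|1-\nu_{\mathrm{Q}}f/\pi_{0|\cA}f\bigr|$, which is exactly your displayed expression after the substitutions $\mu g=p_{\mathrm{E}}-p_{\mathrm{Q}}$ and $\mu f=\pi_{0|\cA}f-\nu_{\mathrm{Q}}f$; the remaining estimates coincide line by line.
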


The upper bound in~\eqref{EqGenBound} shows that the relative biasing error is small if $ T^{\mathrm{E}}_{\mathrm{Q}} \ll  1/p^+$,
namely if the timescale associated with
the relaxation time to the QSD in
$\cA$ is small compared to the
timescale associated with the transition
time from $\cA$ to $\cB$. Notice that the result holds for any QSD $\nu_\mathrm{Q}$, $T^{\mathrm{E}}_Q$ being
the associated convergence time for the $\nu_\mathrm{Q}$-return
process starting from $\nu_{\mathrm{E}}$. We refer to Appendix~\ref{SSec:DTTM}
for a discussion on the sharpness of the biasing error
estimate~\eqref{EqGenBound}, and in particular to Apprendix~\ref{SSSec:ToyModelOne} for
a situation with two QSDs.

\subsection{On pratical estimates of $T^{\mathrm{E}}_{\mathrm{Q}}$}\label{sec:estimTQ}
We discuss in this section two  ways to estimate $T^{\mathrm{E}}_{\mathrm{Q}}$.

\paragraph{Quasi-ergodicity.} The first one is based on the notion of
quasi-ergodicity.
\begin{assumption}[Quasi-ergodicity] 
\label{Ass:Finitecv}
There exist a QSD $\nu_{\mathrm{Q}}$ in $\cA$ and a constant $\eta<\infty$ such that 
\begin{equation}
\sum_{n  \geq 0} \norm{ \cL^{\nu_{\mathrm{E}}}(Y_n \vert \cT_{\cB}> n) - \nu_{\mathrm{Q}}}   \leq \eta .
\label{EqAssumFinitecv}
\end{equation}
\end{assumption}
Quasi-ergodicity provides an elementary bound on the relaxation time $T^{\mathrm{E}}_{\mathrm{Q}}$.
\begin{proposition}
\label{Prop:EqFinite}
Under Assumptions \ref{Ass:K} and \ref{Ass:Finitecv}, the relaxation time $T^{\mathrm{E}}_{\mathrm{Q}}$ to the QSD $\nu_{\mathrm{Q}}$ satisfies 
\begin{equation}
T^{\mathrm{E}}_{\mathrm{Q}}  \leq \eta .
\end{equation}
\end{proposition}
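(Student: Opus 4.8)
The plan is to deduce the bound directly by chaining together the representation of $T^{\mathrm{E}}_{\mathrm{Q}}$ from \Cref{aicjapzcjpzc}, the total‑variation comparison of \Cref{Lemma:NuQConditioned}, and the quasi‑ergodicity hypothesis~\eqref{EqAssumFinitecv}. First I would recall that, by definition~\eqref{Eq:TQE}, one has $T^{\mathrm{E}}_{\mathrm{Q}} = \norm{\nu_{\mathrm{E}} H_{\mathrm{Q}}}$, and that the series representation~\eqref{EqHfReturnprime} of $H_{\mathrm{Q}}$ gives, for every test function $f$ on $\cA$,
$$\nu_{\mathrm{E}} H_{\mathrm{Q}} f = \sum_{n=0}^{\infty} \bigpar{ \cL^{\nu_{\mathrm{E}}}(Y^{\nu_{\mathrm{Q}}}_n) - \nu_{\mathrm{Q}} } f .$$

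Second, I would take the supremum over all test functions $f$ with $\norm{f}_\infty \leq 1$. Since the series of total‑variation norms is summable — which is exactly what \Cref{Lemma:NuQConditioned} together with~\eqref{EqAssumFinitecv} provides — the supremum may be passed inside the sum, and the triangle inequality for the total‑variation norm yields
$$T^{\mathrm{E}}_{\mathrm{Q}} = \norm{\nu_{\mathrm{E}} H_{\mathrm{Q}}} \leq \sum_{n=0}^{\infty} \norm{ \cL^{\nu_{\mathrm{E}}}(Y^{\nu_{\mathrm{Q}}}_n) - \nu_{\mathrm{Q}} } .$$
Then the total‑variation comparison established in \Cref{Lemma:NuQConditioned} with $\mu = \nu_{\mathrm{E}}$ bounds each summand by $\norm{ \cL^{\nu_{\mathrm{E}}}(Y_n \vert \cT_{\cB} > n) - \nu_{\mathrm{Q}} }$, whence
$$T^{\mathrm{E}}_{\mathrm{Q}} \leq \sum_{n=0}^{\infty} \norm{ \cL^{\nu_{\mathrm{E}}}(Y_n \vert \cT_{\cB} > n) - \nu_{\mathrm{Q}} } \leq \eta ,$$
the last inequality being precisely \Cref{Ass:Finitecv}; in fact this chain is nothing but the final display of \Cref{Lemma:NuQConditioned} combined with~\eqref{EqAssumFinitecv}.

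There is essentially no obstacle here. The one point worth a word of justification is the interchange of the supremum over $f$ with the infinite series (equivalently, the fact that $\sum_n \norm{\cL^{\nu_{\mathrm{E}}}(Y^{\nu_{\mathrm{Q}}}_n)-\nu_{\mathrm{Q}}}$ is finite, so that $\nu_{\mathrm{E}} H_{\mathrm{Q}}$ is a well‑defined finite signed measure and the triangle inequality applies to the series); this is guaranteed a priori by \Cref{Ass:Finitecv} via the comparison of \Cref{Lemma:NuQConditioned}. Hence the proposition follows immediately.
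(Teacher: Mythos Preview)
Your proof is correct and, in fact, more direct than the paper's. You simply invoke the final display of \Cref{Lemma:NuQConditioned}, namely $T^{\mathrm{E}}_{\mathrm{Q}} \le \sum_{n \ge 0}\norm{\cL^{\nu_{\mathrm{E}}}(Y_n \mid \cT_{\cB}>n)-\nu_{\mathrm{Q}}}$, and then apply Assumption~\ref{Ass:Finitecv}; the summability justification you give for the triangle inequality is adequate.

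The paper takes a genuinely different route: it writes $\nu_{\mathrm{E}} H_{\mathrm{Q}} f = \sum_{n \ge 0}\bigl(\econdin{\nu_{\mathrm{E}}}{f(Y_n)}{\cT_{\cB}>n}-\nu_{\mathrm{Q}} f\bigr)\probin{\nu_{\mathrm{E}}}{\cT_{\cB}>n}$, introduces the partial sums $I(n)=\sum_{m=0}^n\bigl(\econdin{\nu_{\mathrm{E}}}{f(Y_m)}{\cT_{\cB}>m}-\nu_{\mathrm{Q}} f\bigr)$, performs an Abel summation to obtain $\nu_{\mathrm{E}} H_{\mathrm{Q}} f = \sum_{n \ge 0} I(n)\,\probin{\nu_{\mathrm{E}}}{\cT_{\cB}=n+1}$, and concludes via $|I(n)|\le \eta\|f\|_\infty$ together with $\sum_{n \ge 0}\probin{\nu_{\mathrm{E}}}{\cT_{\cB}=n+1}=1$. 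Your argument discards the factor $\probin{\nu_{\mathrm{E}}}{\cT_{\cB}>n}$ at the outset (via \Cref{Lemma:NuQConditioned}), whereas the paper keeps it and exploits that the increments $\probin{\nu_{\mathrm{E}}}{\cT_{\cB}=n+1}$ form a probability distribution. For the present statement both yield exactly the same bound~$\eta$, so your route is shorter with no loss; the paper's Abel-summation device would only pay off if one wanted a refinement that retains the weights $\probin{\nu_{\mathrm{E}}}{\cT_{\cB}>n}$ rather than bounding them by~$1$.
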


\paragraph{Geometric ergodicity.} A stronger assumption is the
geometric convergence of the conditioned process starting from
$\nu_{\mathrm{E}}$ to a QSD $\nu_{\mathrm{Q}}$. This provides a second manner to estimate~$T^{\mathrm{E}}_\mathrm{Q}$.
\begin{assumption}[Geometric ergodicity] 
\label{Ass:GeoErgoE}
There exist a QSD $\nu_{\mathrm{Q}}$, a constant $\alpha<\infty$, and a
constant $\rho \in (0,1)$ such that, for all $n\geq 0$,
\begin{equation}
\norm{ \cL^{\nu_{\mathrm{E}}}(Y_n \vert \cT_{\cB}> n) - \nu_{\mathrm{Q}}}  \leq \alpha \rho^n .
\label{EqAssumGeoErgodicE}
\end{equation}
\end{assumption}


%
%
%
%

Lemma \ref{Lemma:NuQConditioned} shows that, under Assumption \ref{Ass:GeoErgoE}, we have the upcoming result.

\begin{lemma} \label{Cor:GeoboundE}
 Under Assumptions \ref{Ass:K} and \ref{Ass:GeoErgoE}, the relaxation time satisfies
 \begin{equation}\label{eq:UBTQE}
T^{\mathrm{E}}_{\mathrm{Q}}  \leq \frac{\alpha}{1-\rho}.
 \end{equation}
\end{lemma}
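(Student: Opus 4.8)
Lemma~\ref{Cor:GeoboundE} is an immediate quantitative corollary of Lemma~\ref{Lemma:NuQConditioned}, so the proof is short: one chains the final inequality of Lemma~\ref{Lemma:NuQConditioned} with the geometric bound of Assumption~\ref{Ass:GeoErgoE} and sums a geometric series. There is no real obstacle here; the only thing to get right is to invoke the correct inequality from the earlier lemma and to keep track of the total-variation norms.

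\begin{proof}
By the last assertion of Lemma~\ref{Lemma:NuQConditioned}, applied with $\mu = \nu_{\mathrm{E}}$, we have
\begin{equation}
T^{\mathrm{E}}_{\mathrm{Q}} = \norm{\nu_{\mathrm{E}} H_{\mathrm{Q}}} \le \sum_{n=0}^{\infty} \norm{ \cL^{\nu_{\mathrm{E}}}(Y_n \vert \cT_{\cB}> n) - \nu_{\mathrm{Q}}} .
\end{equation}
Under Assumption~\ref{Ass:GeoErgoE}, each term on the right-hand side is bounded by $\alpha \rho^n$, hence
\begin{equation}
T^{\mathrm{E}}_{\mathrm{Q}} \le \sum_{n=0}^{\infty} \alpha \rho^n = \frac{\alpha}{1-\rho} ,
\end{equation}
where the series converges because $\rho \in (0,1)$. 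This is precisely~\eqref{eq:UBTQE}.
\end{proof}

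In the write-up I would be a little careful to state explicitly that the bound $T^{\mathrm{E}}_{\mathrm{Q}} \le \sum_{n\ge 0} \norm{\cL^{\nu_{\mathrm{E}}}(Y_n \mid \cT_{\cB}>n) - \nu_{\mathrm{Q}}}$ is exactly the displayed consequence of Lemma~\ref{Lemma:NuQConditioned} for $\mu = \nu_{\mathrm{E}}$ (it is the final line of that lemma), so that no additional argument is needed; everything else is the elementary summation of the geometric series permitted by $\rho<1$. The only ``pitfall'' worth flagging is purely bookkeeping: the norm $\norm{\cdot}$ used throughout is the full (not halved) total variation norm as fixed in the Norms-and-operators paragraph, and the same convention is used in Assumption~\ref{Ass:GeoErgoE}, so the constants $\alpha,\rho$ transfer without any factor of $2$. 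Since $\alpha<\infty$ and $\rho\in(0,1)$ are part of the assumption, finiteness of $T^{\mathrm{E}}_{\mathrm{Q}}$ — already guaranteed by $T^{\mathrm{E}}_{\mathrm{Q}}\le T_{\mathrm{Q}}<\infty$ in Proposition~\ref{aicjapzcjpzc} — is re-confirmed here with the explicit constant $\alpha/(1-\rho)$.
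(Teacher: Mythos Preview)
Your proof is correct and follows exactly the approach the paper indicates: the paper simply states that Lemma~\ref{Lemma:NuQConditioned} yields the result under Assumption~\ref{Ass:GeoErgoE}, and you have written out precisely that derivation (apply the final inequality of Lemma~\ref{Lemma:NuQConditioned} with $\mu=\nu_{\mathrm{E}}$, bound each term by $\alpha\rho^n$, and sum the geometric series). This is also the same computation the paper carries out explicitly for the uniform version in the proof of Lemma~\ref{Cor:Geobound}.
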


\paragraph{Uniform geometric ergodicity.}
In practice, it may be easier to prove a stronger assumption than Assumption~\ref{Ass:GeoErgoE}, namely
the uniform geometric ergodicity, which writes:
\begin{assumption}[Uniform geometric ergodicity] 
\label{Ass:GeoErgo}
 There exist a QSD $\nu_{\mathrm{Q}}$, a constant $\alpha<\infty$, and a
constant $\rho \in (0,1)$ such that, for all $n\geq 0$,
\begin{equation}
\sup_{x \in \cA}\norm{ \cL^x(Y_n \vert \cT_{\cB}> n) - \nu_{\mathrm{Q}}}  \leq \alpha \rho^n .
\label{EqAssumGeoErgodic}
\end{equation}
\end{assumption}
With our definition \eqref{Eq:DefTV} of the total variation distance, one can prove that necessarily, in Assumption \ref{Ass:GeoErgo}, one has $\alpha\geq 1$, provided that $\cA$ is not reduced to a single point.
The uniform geometric ergodicity~\eqref{EqAssumGeoErgodic} is for example a consequence of
the two-sided condition stated in Equation~\eqref{Eq:UniformlyPos} as
will be illustrated in the upcoming section (see Section~\ref{Sec:birkoff},~\cite{Birkhoff1957},~\cite[Section
7.1]{Champagnat_Villemonais_2017},
and~\cite{atar1997exponential,le2004stability}). As already mentioned
in Remark~\ref{rem:unique_QSD}, Equation~\eqref{Eq:UniformlyPos}
actually implies that the so-called Dobrushin
ergodic coefficient is smaller than one, which also yields the uniform
geometric ergodicity~\eqref{EqAssumGeoErgodic}, see~\cite{del2001stability}
and~\cite[Section 12.2]{del2013mean}. For a thorough review of
sufficient conditions to get~\eqref{EqAssumGeoErgodic}, we refer to the recent
review paper~\cite{moral2021stability}. One can verify that if \eqref{EqAssumGeoErgodic} is satisfied, then for any initial distribution $\nu$ on $\cA$,
\begin{equation}
\norm{ \cL^\nu(Y_n \vert \cT_{\cB}> n) - \nu_{\mathrm{Q}}}  \leq \alpha \rho^n .
\label{lazxlkcx}
\end{equation}
 In particular, Assumption \ref{Ass:GeoErgo} implies the uniqueness of the quasi-stationary distribution
as well as
Assumption~\ref{Ass:GeoErgoE} and thus~\eqref{eq:UBTQE}. 
Actually, under~\eqref{EqAssumGeoErgodic}, one can prove the following equivalent of Lemma~\ref{Cor:GeoboundE}:
\begin{lemma} \label{Cor:Geobound}
 Under Assumptions \ref{Ass:K} and \ref{Ass:GeoErgo}, the relaxation time satisfies
 \begin{equation}\label{eq:UBTQ}
T^{\mathrm{E}}_{\mathrm{Q}} \le T_{\mathrm{Q}}  \leq \min \left\{ \frac{\alpha}{1-\rho}, \inf_{0< c <1}\frac{2}{1- c}\ceil[\Big]{ \frac{\ln(c \alpha^{-1})}{\ln(\rho)} }\right\} .
 \end{equation}
where $\ceil[\small]{\cdot} $ denotes the ceiling function. As a
consequence, $T_{\mathrm{Q}}$ is upper-bounded by $\min(\alpha, 2)$ when $\rho$ tends to
$0$. 
\end{lemma}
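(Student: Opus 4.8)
The plan is to bound $T_{\mathrm Q}=\norm{H_{\mathrm Q}}_\infty$ directly from the series representation \eqref{EqHfReturnprime} of $H_{\mathrm Q}$ and the comparison in \Cref{Lemma:NuQConditioned}, exploiting crucially that, unlike the conditioned process, the $\nu_{\mathrm Q}$-return process $(Y^{\nu_{\mathrm Q}}_n)$ is a genuine Markov chain on $\cA$ (with kernel $K^{\nu_{\mathrm Q}}=K_\cA+(K_{\cA\cB}\mathds 1_\cB)\otimes\nu_{\mathrm Q}$) for which $\nu_{\mathrm Q}$ is an \emph{invariant} probability measure, by \Cref{Prop:InvMeasureReturn} and \Cref{Prop:UniquStationaryReturnQSD}; hence its Dobrushin ergodic coefficient is sub-multiplicative along powers of $K^{\nu_{\mathrm Q}}$. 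Writing $H_{\mathrm Q}(x,\cdot)=\sum_{n\ge 0}\bigl(\cL^x(Y^{\nu_{\mathrm Q}}_n)-\nu_{\mathrm Q}\bigr)$ as a signed measure and using $\abs{\mu f}\le\norm{f}_\infty\norm{\mu}$ together with $T^{\mathrm E}_{\mathrm Q}\le T_{\mathrm Q}$ (\Cref{aicjapzcjpzc}), I would first obtain
$$T^{\mathrm E}_{\mathrm Q}\le T_{\mathrm Q}\le\sup_{x\in\cA}\ \sum_{n\ge 0}\norm{\cL^x(Y^{\nu_{\mathrm Q}}_n)-\nu_{\mathrm Q}}\le\sum_{n\ge 0}\beta_n,\qquad \beta_n:=\sup_{x\in\cA}\norm{\cL^x(Y^{\nu_{\mathrm Q}}_n)-\nu_{\mathrm Q}}.$$
By \Cref{Lemma:NuQConditioned}, $\beta_n\le\sup_{x\in\cA}\norm{\cL^x(Y_n\mid\cT_\cB>n)-\nu_{\mathrm Q}}$, which under \Cref{Ass:GeoErgo} (see also \eqref{lazxlkcx}) is at most $\alpha\rho^n$. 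Summing this geometric series already yields $T_{\mathrm Q}\le\alpha/(1-\rho)$, the first term of the minimum in \eqref{eq:UBTQ}.

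For the second term, I would fix $c\in(0,1)$ and set $N=\ceil[\big]{\ln(c\alpha^{-1})/\ln\rho}$. Since $0<c<1\le\alpha$ we have $\ln(c\alpha^{-1})<0$ and $\ln\rho<0$, so $N$ is a well-defined positive integer, and by construction $\alpha\rho^N\le c$, hence $\beta_N\le c$. Now I would use that $(Y^{\nu_{\mathrm Q}}_n)$ is a Markov chain with invariant probability $\nu_{\mathrm Q}$: its Dobrushin coefficient $\tau(\cdot)$ satisfies $\tau\bigl((K^{\nu_{\mathrm Q}})^N\bigr)\le\beta_N\le c$, so $\tau\bigl((K^{\nu_{\mathrm Q}})^{kN}\bigr)\le c^{\,k}$ and, using $P^{kN}(x,\cdot)-\nu_{\mathrm Q}=\bigl(P^N(x,\cdot)-\nu_{\mathrm Q}\bigr)P^{(k-1)N}$ with $P:=K^{\nu_{\mathrm Q}}$, one gets $\beta_{kN}\le c^{\,k}$ for all $k\ge 1$; moreover $(\beta_n)_{n\ge0}$ is non-increasing because $P$ never increases the total variation norm of a difference of measures. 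Splitting $\sum_{n\ge0}\beta_n=\sum_{n=0}^{N-1}\beta_n+\sum_{k\ge1}\sum_{j=0}^{N-1}\beta_{kN+j}$, bounding the first block by $2N$ (each $\beta_n\le 2$, being a gap between probability measures for the convention \eqref{Eq:DefTV}) and the $k$-th block by $N\beta_{kN}\le Nc^{\,k}$, gives $\sum_{n\ge0}\beta_n\le 2N+Nc/(1-c)\le 2N/(1-c)$, i.e.\ $T_{\mathrm Q}\le\tfrac{2}{1-c}\ceil[\big]{\ln(c\alpha^{-1})/\ln\rho}$. Taking the infimum over $c\in(0,1)$ and combining with $\alpha/(1-\rho)$ proves \eqref{eq:UBTQ}.

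Finally, for the asserted limit: with $\alpha$ fixed, $\alpha/(1-\rho)\to\alpha$ as $\rho\to 0$, while for each fixed $c\in(0,1)$ one has $\ln(c\alpha^{-1})/\ln\rho\to 0^+$, so $\ceil[\big]{\ln(c\alpha^{-1})/\ln\rho}=1$ for $\rho$ small enough; hence $\limsup_{\rho\to0}T_{\mathrm Q}\le\inf_{0<c<1}\tfrac{2}{1-c}=2$, and therefore $\limsup_{\rho\to0}T_{\mathrm Q}\le\min(\alpha,2)$. The only delicate point of the argument is the second bound: one must make the Dobrushin sub-multiplicativity precise with the unnormalised total variation convention \eqref{Eq:DefTV}, carefully tracking the factors of $2$ relating $\tau(\cdot)$, the $\beta_n$'s and $\sup_{x,y}\norm{K^n(x,\cdot)-K^n(y,\cdot)}$, so that they recombine into exactly the constant $2/(1-c)$ after the block summation; everything else is routine bookkeeping on geometric sums.
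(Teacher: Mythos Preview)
Your proof is correct. For the first bound $T_{\mathrm Q}\le\alpha/(1-\rho)$ and for the limiting statement as $\rho\to 0$, your argument is essentially identical to the paper's. For the second bound, however, you take a genuinely different route: the paper invokes the Aldous--Lov\'asz--Winkler result \eqref{Eq:RelTStop} relating $T_{\mathrm Q}$ to the randomised stopping-time quantity $T_{\mathrm{stop}}(c)$ defined in \eqref{Def:TStop}, observing that the deterministic time $T=\ceil[\big]{\ln(c\alpha^{-1})/\ln\rho}$ is an admissible competitor in the infimum defining $T_{\mathrm{stop}}(c)$, so that $T_{\mathrm{stop}}(c)\le T$ and then $T_{\mathrm Q}\le\frac{2}{1-c}T_{\mathrm{stop}}(c)\le\frac{2}{1-c}T$. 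You instead work directly with the $\nu_{\mathrm Q}$-return chain: you use the triangle inequality to get $\tau\bigl((K^{\nu_{\mathrm Q}})^N\bigr)\le\beta_N\le c$, then sub-multiplicativity of the Dobrushin coefficient to propagate this to $\beta_{kN}\le c^k$, and a block summation $\sum_{n\ge0}\beta_n\le 2N+Nc/(1-c)\le 2N/(1-c)$. Your approach is more elementary and fully self-contained (no external black box), at the cost of redoing a small piece of standard mixing-time machinery; the paper's approach is shorter once one is willing to cite \cite{aldous1997mixing}, and it makes explicit the conceptual link between $T_{\mathrm Q}$ and $T_{\mathrm{stop}}(c)$. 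Your caveat about tracking factors of $2$ under the convention \eqref{Eq:DefTV} is well placed: the key identities are $\tau(P^N)=\tfrac12\sup_{x,y}\norm{P^N(x,\cdot)-P^N(y,\cdot)}\le\beta_N$ and $\norm{\mu P}\le\tau(P)\norm{\mu}$ for any $\mu$ with $\mu(\cA)=0$, after which the inequality $\beta_{kN}\le c^{k-1}\beta_N\le c^k$ follows cleanly.
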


\section{Back to the case of a diffusion process}
\label{Sec:Diffusion}
\subsection{Verifying assumptions \ref{Ass:K} and \ref{Ass:GeoErgo}}
\label{sdlkcakc}
The setting in this section is the same as in
Section~\ref{sec:motiv}: the Markov chain $(Y_n)$ is
defined by~\eqref{Eq:MCDiffusion} from a diffusion process $(X_t)$
satisfying~\eqref{Eq:SDE}. Let us recall that the functions $f$ and $g$ in~\eqref{Eq:SDE} are
assumed to be smooth (globally Lipschitz is enough to get the
Feynman-Kac representations formulas in the proof of Proposition~\ref{prop:ellipticdiff} below) and such that $(X_t)$ is ergodic with respect to
a stationary measure which gives a non-zero probability to the sets
$A$ and $B$. The sets $A$ and $B$ are thus visited infinitely often and  the Markov chain $(Y_n)$ is then well defined for all
$n \ge 0$.

The purpose of the upcoming result is to exhibit some sufficient assumptions on the
diffusion $(X_t)$
for the associated Markov chain  $(Y_n)$ to  satisfy Assumptions
\ref{Ass:K} and \ref{Ass:GeoErgo}. Let us emphasize that we stick to a
relatively simple set of assumptions (smooth Lipschitz coefficients, elliptic
diffusion) for the sake of simplicity, but we expect the result to be
true for much more general diffusions, including the Langevin
dynamics~\eqref{eq:Langevin}, see also Remark~\ref{rem:Langevin}. The objective here is just to
illustrate how Assumptions
\ref{Ass:K} and \ref{Ass:GeoErgo} can be obtained in practice in a
simple setting.

\begin{proposition}\label{prop:ellipticdiff}
Let us assume that the domains $A$ and $B$ are chosen such that
Assumption \ref{Ass:K0}  on $\cA$ and $\cB$ is satisfied, meaning that $\cA= \partial A$ and $\cB= \partial B$ are compact disjoint sets and $ \cE=\cA\cup\cB$. Let us
assume moreover that the infinitesimal
generator of $(X_t)$
satisfying~\eqref{Eq:SDE} is elliptic, in the sense that
\begin{equation}\label{eq:elliptic}
\exists \lambda, \Lambda >0, \, \forall x \in \R^d, \, \lambda \le gg^T(x)\le \Lambda.
\end{equation}
Then, the associated Markov chain $(Y_n)$ defined by
\eqref{Eq:MCDiffusion} satisfies Assumptions~\ref{Ass:K} and~\ref{Ass:GeoErgo}.
\end{proposition}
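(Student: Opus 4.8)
The goal is to verify Assumptions~\ref{Ass:K} and~\ref{Ass:GeoErgo} for the chain $(Y_n)$ built from an elliptic diffusion $(X_t)$, given that $\cA=\partial A$ and $\cB=\partial B$ are compact and disjoint. I would organize the proof around the four items \ref{Ass:K0}--\ref{Ass:K3} together with the uniform geometric ergodicity~\eqref{EqAssumGeoErgodic}, the last being the real work. Item~\ref{Ass:K0} is assumed. For item~\ref{Ass:K1} (weak-Feller), I would use the strong Markov property and continuity of $x\mapsto X_t^x$ in the diffusion: for $f\in C_b(\cE,\R)$ one writes $Kf(x)=\expecin{x}{f(X_{\tau_{Y,1}})}$, and since exit times from the regions bounded by $\Upsigma$ and by $\partial(A\cup B)$ depend continuously on the starting point for a nondegenerate (elliptic) diffusion with smooth boundaries—there are no ``sticky'' or characteristic boundary points because the noise is uniformly elliptic—one gets continuity of $x\mapsto Kf(x)$ by dominated convergence. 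This is standard boundary-regularity theory for elliptic operators (regular points in the sense of Wiener); I would cite it rather than reprove it.

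\textbf{Harris recurrence and the stationary measure.} For items~\ref{Ass:K2} and~\ref{Ass:K3}, the underlying diffusion is ergodic with an invariant probability $\mu_X$ giving positive mass to $A$ and $B$ (hence the boundaries $\partial A,\partial B$ are visited infinitely often). I would define $\pi_0$ as the trace on $\cE=\partial A\cup\partial B$ of the occupation measure of $(X_t)$ at the hitting times $\tau_{Y,n}$: concretely, $\pi_0(\cC)=\lim_{N}\frac1N\sum_{n=1}^N \ind{Y_n\in\cC}$, which exists $\mu_X$-a.s.\ by Birkhoff applied to the diffusion and is a genuine probability measure because $\lim_n \tau_{Y,n}=\infty$ a.s.\ (the diffusion cannot accumulate infinitely many crossings of $\Upsigma$ in finite time, by the non-degeneracy). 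Harris recurrence of $(Y_n)$ follows from irreducibility of $(X_t)$: by ellipticity the diffusion has a positive transition density, so from any $x\in\cE$ and any ball around any $y\in\cE$ there is positive probability of hitting that neighbourhood, and recurrence of $(X_t)$ upgrades this to the a.s.-infinitely-often statement~\eqref{eq:pos_Har_rec}. Positivity of $\pi_0(\cA)$ and $\pi_0(\cB)$ is immediate since both $\partial A$ and $\partial B$ are hit infinitely often.

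\textbf{Uniform geometric ergodicity via a two-sided estimate.} The crux is~\eqref{EqAssumGeoErgodic}, and per Remark~\ref{rem:unique_QSD} it suffices to establish the two-sided bound~\eqref{Eq:UniformlyPos}: for some $n\ge1$, some constant $C\ge1$, probability measure $\pi$ on $\cA$ and function $s:\cA\to(0,1]$, one has $s(x)\pi(dy)\le K^n_\cA(x,dy)\le C s(x)\pi(dy)$ for all $x\in\cA$. Here $K_\cA^n(x,dy)$ is the sub-probability of starting from $x\in\partial A$, making $n$ excursions (each out to $\Upsigma$ and back to $A\cup B$) all returning to $\partial A$ without ever touching $\partial B$, and landing in $dy$. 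I would take $n=1$ (or a fixed small $n$ if needed) and argue as follows: conditioned on not hitting $B$, the diffusion started at $x\in\partial A$ travels to $\Upsigma$ and back to $\partial A$; because $gg^T$ is bounded above and below, the associated killed kernel has a density $k(x,y)$ with respect to a fixed reference measure on $\partial A$ (surface measure), and this density is bounded above and below by positive constants, uniformly in $x,y\in\partial A$. The lower bound is a Harnack-type / interior-positivity statement for the elliptic operator on the (bounded, smooth) region between $\partial A$ and $\Upsigma$ with Dirichlet killing on $\partial B$; the upper bound is a Gaussian-type heat-kernel estimate for the same killed process. Compactness of $\partial A$ and the strict separation $\bar\Upsigma\cap\bar A=\bar\Upsigma\cap\bar B=\varnothing$, $\bar A\cap\bar B=\varnothing$ make all these constants uniform. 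Then one sets $\pi(dy)\propto \inf_x k(x,y)\,\sigma(dy)$ and $s(x)=\int \inf_{x'}k(x',y)\,\sigma(dy)>0$, and~\eqref{Eq:UniformlyPos} follows with $C$ the ratio of the sup to the inf of $k$.

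\textbf{Main obstacle.} The delicate point is producing the uniform two-sided density estimate for the \emph{killed} kernel $K_\cA$ — that is, controlling the diffusion conditioned to avoid $\partial B$ while it makes an $A$-to-$\Upsigma$-to-$A$ excursion — with constants independent of the starting point on $\partial A$. The upper bound is routine (it is dominated by the unkilled transition density, which is Gaussian by ellipticity). The lower bound is the real content: one needs that, uniformly over $x\in\partial A$, there is a fixed positive probability of completing the excursion and landing with a density bounded below near any $y\in\partial A$, which requires a quantitative parabolic Harnack inequality / support theorem for the elliptic generator on the relevant bounded smooth domain, using that $\partial B$ is at positive distance. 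I would invoke the classical results of Aronson (two-sided Gaussian bounds) together with boundary Harnack estimates, citing for instance the references already mentioned (\cite{Champagnat_Villemonais_2017}, \cite{atar1997exponential}, \cite{le2004stability}), and reduce~\eqref{EqAssumGeoErgodic} to~\eqref{Eq:UniformlyPos} exactly as indicated in Remark~\ref{rem:unique_QSD}.
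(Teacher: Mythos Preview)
Your outline is broadly correct and follows the same overall architecture as the paper: Feller via regularity of harmonic measures, Harris recurrence via irreducibility of the underlying diffusion, and Assumption~\ref{Ass:GeoErgo} via the two-sided condition~\eqref{Eq:UniformlyPos}. Two points deserve comment.

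First, for the weak-Feller property you appeal to continuity of exit times in the initial condition; this is not quite the right statement (exit times are in general only lower semicontinuous), and what you actually need is continuity of the exit \emph{distribution}. The paper obtains this more cleanly by writing $Kf(x)$ as a composition of two harmonic functions: $u(x)=\expecin{x}{\varphi(X_{\tau_\Upsigma})}$ solving $Lu=0$ in $S$, and $v(z)=\expecin{z}{\psi(X_{\tau_\cA})\mathbf{1}_{\tau_\cA<\tau_\cB}}$ solving $Lv=0$ in $(A\cup B)^c$, both of which are $C^\infty$ in the interior by elliptic regularity. Since $\cA\subset S$ and $\Upsigma\subset(A\cup B)^c$ are interior sets for the respective domains, this gives continuity of $Kf$ directly. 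For \ref{Ass:K2}--\ref{Ass:K3} the paper also takes a different route: rather than invoking Birkhoff on the diffusion, it cites \cite{Lu_Nolen_2015} for Harris recurrence of the reactive entrance processes in $\cA$ and in $\cB$, and then uses \eqref{eq:pi0nuAnuB} (Remark~\ref{Rem:EntranceStation}) to reconstruct $\pi_0$ and read off $\pi_0(\cA),\pi_0(\cB)>0$.

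Second, and more substantively, for the two-sided estimate you propose to bound the density of the killed kernel $K_\cA(x,dy)$ uniformly in $x,y\in\partial A$ using Aronson bounds plus boundary Harnack. This works but is heavier than needed, and you correctly flag the uniform lower bound near the boundary as the delicate point. The paper avoids the boundary issue entirely by exploiting the factorisation $K_\cA(x,dy)=\int_\Upsigma K_{S\Upsigma}(x,dz)\,K_{(A\cup B)^c\cA}(z,dy)$: the entire $x$-dependence sits in the first factor, and since $\cA=\partial A$ is compactly contained in the \emph{interior} of $S$, the interior Harnack inequality for $L$-harmonic functions gives $\sup_{x\in\cA}u(x)\le C\inf_{x\in\cA}u(x)$ with $C$ independent of the (nonnegative) boundary datum $\varphi$ on $\Upsigma$. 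Taking $\varphi(z)=K_{(A\cup B)^c\cA}(z,O)$ for an arbitrary open $O\subset\cA$ yields $K_\cA(x,O)\le C\inf_{x'\in\cA}K_\cA(x',O)$ uniformly in $O$, hence~\eqref{Eq:UniformlyPos} with $s\equiv 1$ and $\pi_\cA(dy)=\inf_{x}K_\cA(x,dy)$. No boundary Harnack or density estimates are required.
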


\subsection{Numerical counterparts of the Hill relation}\label{sec:diff2}

Returning to the setting and notation of Section~\ref{sec:motiv}, see Equation~\eqref{eqFuncDelta},
let us
consider the case where $f=\Delta$ in Equation~\eqref{EqHillQSD}
in order to approximate the reaction time $T_{AB}$ in
Equation~\eqref{EqDeltaTimeWithPoisson}. We first rewrite  the
right-hand side of~\eqref{EqHillQSD} as follows:
\begin{align}
\expecin{\nu_{\mathrm{Q}}}{\sum_{n=0}^{\cT_{\cB} -1} \Delta(Y_n) }& =
 \frac{\expecin{\nu_{\mathrm{Q}}}{\Delta(Y_0)  \mathbf{1}_{Y_1 \in \cA} } }{\probin{\nu_{\mathrm{Q}}}{Y_1 \in \cB}} +  \frac{\expecin{\nu_{\mathrm{Q}}}{\Delta(Y_0)  \mathbf{1}_{Y_1 \in \cB} } }{\probin{\nu_{\mathrm{Q}}}{Y_1 \in \cB}}
\\
& = \econdin{\nu_{\mathrm{Q}}}{\Delta(Y_0)}{Y_1 \in \cA} \left( \frac{1}{\probin{\nu_{\mathrm{Q}}}{Y_1 \in \cB}} -1 \right) +
\econdin{\nu_{\mathrm{Q}}}{\Delta(Y_0)}{Y_1 \in \cB}   . \label{EqHillQSDprime}
\end{align}
The terms in~\eqref{EqHillQSDprime} can be computed as follows:
\begin{itemize}
\item $\econdin{\nu_{\mathrm{Q}}}{\Delta (Y_0)}{Y_1 \in \cA}$ is the mean time
of a loop starting from $\nu_\mathrm{Q}$ on $\cA$ to $\Upsigma$ and then
back to $\cA$ (without having touched $\cB$ in-between, which is anyway a
rare event which is thus not observed in
practice in direct simulation). It can be estimated by brute force Monte Carlo,
provided that one is able to sample from the QSD $\nu_\mathrm{Q}$. In
practice, sampling from the QSD is typically done by direct
simulations of the trajectories, using the fact that in our context, the process conditioned to stay in
$\cA$ quickly reaches the QSD, and rarely visits $\mathcal B$ from
$\mathcal A$. As explained above, this can be
formalized mathematically (see the assumption in
Theorem~\ref{Prop:EqGene}, and the discussion in Section~\ref{sec:estimTQ} on
practical estimates of the relaxation time to the QSD $T^{\mathrm{E}}_{\mathrm{Q}}$). Therefore, sampling
$\nu_{\mathrm{Q}}$ and estimating $\econdin{\nu_{\mathrm{Q}}}{\Delta
  (Y_0)}{Y_1 \in \cA}$  are done simultaneously by simulating the stationary state obtained
from successive loops from $\cA$ to $\Upsigma$ and then
back to $\cA$ (which indeed do not visit $\cB$ in-between). Notice
that samples from $\nu_\mathrm{Q}$ will also be used to estimate the quantities at stake in the
next two items.
\item $\probin{\nu_{\mathrm{Q}}}{Y_1 \in \cB}$ is the probability to
  observe a trajectory that starts from $\nu_\mathrm{Q}$ on $\cA$
  and directly goes to $\cB$ without going back to $\cA$ once $\Upsigma$ is crossed.  This probability is typically very
small, but can be efficiently estimated using rare event simulation
 methods such as splitting techniques (see for example Forward Flux Sampling~\cite{allen2009forward} or Adaptive Multilevel Splitting~\cite{cg,cerou2011multiple,teo-mayne-schulten-lelievre-16}).
\item $\econdin{\nu_{\mathrm{Q}}}{\Delta(Y_0)}{Y_1 \in \cB}$ is the mean
duration of a reactive trajectory that, starting from $\nu_\mathrm{Q}$ on $\cA$, crosses
$\Upsigma$ and then goes to $\cB$ without going back
to~$A$. Again, this is a quantity associated to the rare event $\{Y_1
  \in \cB\}$. As such, it can be approximated together with
$\probin{\nu_{\mathrm{Q}}}{Y_1 \in \cB}$, using the algorithms mentioned
in the previous item.
\end{itemize}
Notice that formula~\eqref{EqHillQSDprime} is 
exactly~\cite[Equation (10)]{cerou2011multiple} and very close
to~\cite[Equation (6)]{allen2009forward} (where the duration of the
reactive path is neglected), for example. In~\cite{allen2009forward},
this formula is called the ``effective positive flux'' formulation of
the rate constant, and it is mentioned that this formula is used in
combination with Forward Flux Sampling or Transition Interface Sampling
to get an estimate of the rate (which is the inverse of the 
reaction time). The mathematical setting presented here as well as Theorem~\ref{Prop:EqGene} thus give in
particular rigorous foundations to empirical methods
that have been used in the literature to estimate reaction times on
various molecular systems.

As explained in
Section~\ref{sec:motiv}, the submanifold $\Upsigma$ that is used to
define the Markov chain~$(Y_n)$ can be seen as a tuning
parameter: the reaction time $T_{AB}$ is the same whatever
this choice. As $\Upsigma$ is chosen further and further from $A$, it
is expected that the bias introduced when replacing
$\nu_\mathrm{E}$ by $\nu_\mathrm{Q}$ (analysed in Section~\ref{Sec:Bound}) gets larger and
larger since the underlying assumption that an equilibrium between
$A$ and $\Upsigma$ is reached before the transition becomes less
justified. Moreover, the sampling of $\nu_\mathrm{Q}$ becomes
more costly for the loops between $A$ and $\Upsigma$ are more
expensive to simulate. Nonetheless, when $\Upsigma$ gets further from $A$, the probability to observe a
transition to $B$ rather than to $A$ becomes larger, and thus easier to estimate. In this respect, it would be interesting to discuss if some
general recommendations on the choice of $\Upsigma$
could be given, taking into account the bias, the variance, and the algorithmic cost of
the involved estimators. Concerning the variance of the estimators, we refer
to~\cite[Chapter 3]{lopes19:PhD} for a discussion of importance
sampling methods which greatly reduce the variance of averages
over the reactive path ensemble.

Finally, let us mention that, in \eqref{EqHillQSDprime}, the first term is typically much larger than the second one in most situations of interest. For example, this is made explicit in \cite{MR3083930} for an overdamped Langevin dynamics in dimension 1 when the temperature parameter $T$ goes to zero.

\appendix

\section{Sharpness of the relative biasing error bound}
\label{SSec:DTTM}
In this section, we illustrate the sharpness of the bound in \Cref{Prop:EqGene} thanks to two discrete-time models.

\subsection{A toy example}
\label{SSSec:ToyModelOne}

\paragraph{Setting.}
 Consider the Markov chain $(Y_n)$ on $\{1,2,3\}$
 with transition matrix
 \begin{equation}
 K=
\begin{bmatrix}
 1-p & 0 & p \\ 
 q & 1-q &0 \\ 
 0 & r & 1-r
 \end{bmatrix}.
 \end{equation}
where the parameters $p$, $q$, and $r$ all belong to $(0,1)$.
Let $\cA :=\braces{1, 2}$ and $\cB :=\braces{3}$, so that $p^+ :=\sup_{x\in\cA}\probin{x}{Y_1 \in \cB}=p$ and $\nu_{\mathrm{E}} = \left[ 0,1 \right]$. $\cA$ and $\cB$ are metastable if $p\ll 1$ and $r\ll 1$. On this elementary example, a probabilistic reasoning on geometric laws straightforwardly gives $\expecin{ \nu_{\mathrm{E}}}{\cT_{\cB}} = \frac{1}{p}+\frac{1}{q}$. This may also be checked through direct computation since \eqref{Eq:MRT} implies that the mean hitting time of $\cB$ starting from a law $\nu$ on $\cA$ can be expressed as
 \begin{equation}
 \expecin{\nu}{\cT_{\cB}} = \nu (\id_{\cA}- K_{ \cA})^{-1} \mathds{1}_{\cA}.
 \end{equation}
 One may also see that
 $$\cL^1(Y_n \vert \cT_{\cB}> n)=[1,0],$$
 and
 $$ \cL^2(Y_n \vert \cT_{\cB}> n)=\frac{1}{\frac{q}{q-p}(1-p)^n-\frac{p}{q-p}(1-q)^n}\left[\frac{q}{q-p}\left((1-p)^n-(1-q)^n\right),(1-q)^n\right].$$
The eigenvalues $\lambda_1=(1-p)$ and $\lambda_2=(1-q)$  of $K_{\cA}$ are respectively associated to the left eigenmeasures $\nu_1=[1,0]$ and $\nu_2=[q/p,1-q/p]$. Consequently, if $0<p\leq q<1$, $\nu_1$ is the only quasi-stationary distribution for the process killed when leaving $\cA$,  and
$$H_1:= (\id_{\cA}-K_{\cA})^{-1}(\id_{\cA}-\mathds{1}_{\cA}\otimes \nu_1)=\frac{1}{q}\begin{bmatrix}
0& 0\\
-1& 1 
\end{bmatrix} .$$
Hence, we get 
$$T_1:=\norm{H_1}_\infty=\sup_{x \in \cA} \norm{H_1(x,\cdot)}=\frac{2}{q}=\norm{\nu_{\mathrm{E}}H_1}=:T_1^{\mathrm{E}}.$$
On the opposite, if $0<q<p<1$, there are two quasi-stationary distributions, namely $\nu_1$ and $\nu_2$. This time, we still have $T_1:=\norm{H_1}_\infty=\frac{2}{q}=\norm{\nu_{\mathrm{E}}H_1}=:T_1^{\mathrm{E}}$. However, we shall also consider
$$H_2:= (\id_{\cA}-K_{\cA})^{-1}(\id_{\cA}-\mathds{1}_{\cA}\otimes \nu_2)=\frac{1}{p}\begin{bmatrix}
1-q/p& q/p-1\\
-q/p& q/p 
\end{bmatrix},$$
and deduce
$$T_2:=\norm{H_2}_\infty=\sup_{x \in \cA} \norm{H_2(x,\cdot)}=\frac{2}{p}\max(1-q/p,q/p),$$
whereas $T_2^{\mathrm{E}}:=\norm{\nu_{\mathrm{E}}H_2}=2q/p^2$.

\paragraph{The case $0<p\leq q<1$.} 

In this situation, as just mentioned,  $\nu_1$ is the only QSD. 
Clearly, we have $ \expecin{\nu_1}{\cT_{\cB}} = \frac{1}{p}$, and the
relative biasing error~\eqref{EqExactBiaisTrace} is thus simply
$$\frac{\abs{ \expecin{ \nu_{\mathrm{E}}}{\cT_{\cB}} - \expecin{\nu_{\mathrm{Q}}}{\cT_{\cB}} }}{ \expecin{ \nu_{\mathrm{E}}}{\cT_{\cB}} } = \frac{p}{p+q}.$$
Since $p^+ =p$ and $T_1=T_1^{\mathrm{E}} = 2/q $, the requirement $
p^+T^{\mathrm{E}}_1<1$ of \Cref{Prop:EqGene} is satisfied as soon as
$p<q/2$. Accordingly, let us consider the regime $p^+T^{\mathrm{E}}_1 \ll 1$, i.e., $p \ll q$. On the one hand, one gets
$$\frac{\abs{ \expecin{ \nu_{\mathrm{E}}}{\cT_{\cB}} -
    \expecin{\nu_1}{\cT_{\cB}} }}{ \expecin{
    \nu_{\mathrm{E}}}{\cT_{\cB}} } \sim  \frac{p}{q}.$$
On the other hand, the bound on the relative biasing error
given by~\eqref{EqGenBound} scales like (take $f =\mathds{1}_{\cA}$):
$$2 \frac{p^+T_1}{1-p^+T_1}= 4 \frac{p}{q-2p} \sim 4 \frac{p}{q}.
$$
This illustrates the sharpness of
the bound in Theorem~\ref{Prop:EqGene}, even in a case where $\nu_\mathrm{E}$
differs a lot from the QSD $\nu_1$, meaning that they have disjoint supports.

Additionally, one can verify that Assumption \ref{Ass:GeoErgo} of uniform geometric ergodicity is fulfilled with $\alpha=2$ and $\rho=(1-q)/(1-p)$. Indeed, standard computations reveal that, for all $n \geq 0$ and all
initial condition $x \in \cA$,
$$\norm{ \cL^x(Y_n \vert \cT_{\cB}> n) - \nu_1}=2\frac{1}{1+\frac{q}{q-p}\left(\left(\frac{1-p}{1-q}\right)^n-1\right)}\leq 2\left(\frac{1-q}{1-p}\right)^n.$$
Considering the upper bound in Lemma \ref{Cor:Geobound} with $\alpha=2$ and $\rho=(1-q)/(1-p)$, one can then numerically check that, for any $0<p<q<1$,
$$\min \left\{ \frac{\alpha}{1-\rho}, \inf_{0< c <1}\frac{2}{1- c}\ceil[\Big]{ \frac{\ln(c \alpha^{-1})}{\ln(\rho)} }\right\}=\frac{\alpha}{1-\rho}=2\frac{1-p}{q-p}.$$
We retrieve the fact that the latter is always larger than $T_1=2/q$. More interestingly, in the regime $p\ll q$, we have 
$$\frac{\alpha}{1-\rho}=2\frac{1-p}{q-p}\sim\frac{2}{q}=T_1.$$

\paragraph{The case $0<q<p<1$.} 

This time, $\nu_1$ and  $\nu_2$ are the two quasi-stationary distributions of the process killed when leaving $\cA$. As such, Assumption \ref{Ass:GeoErgo} cannot be fulfilled.
One may notice that Assumption \ref{Ass:GeoErgoE} is not satisfied for $\nu_1$ because
$$\norm{\cL^{\nu_\mathrm{E}}(Y_n \vert \cT_{\cB}> n)-\nu_1}=\norm{\cL^2(Y_n \vert \cT_{\cB}> n)-\nu_1}\geq 2\frac{p-q}{p},$$
but the inequality
$$\norm{\cL^{\nu_\mathrm{E}}(Y_n \vert \cT_{\cB}> n)-\nu_2}=2\frac{p-q}{p}\frac{1}{\frac{p}{q}\left(\frac{1-q}{1-p}\right)^n-1}\leq 2\frac{q}{p}\left(\frac{1-p}{1-q}\right)^n$$
shows that Assumption \ref{Ass:GeoErgoE} is fulfilled for $\nu_2$ with $\alpha=2q/p$ and $\rho=(1-p)/(1-q)$. This is  consistent with Lemma \ref{Cor:GeoboundE}, which tells us that the relaxation time $T^{\mathrm{E}}_2=2q/p^2$ to $\nu_2$, starting from the reactive entrance distribution $\nu_\mathrm{E}$, satisfies 
\begin{equation}\label{zchaeceic}
T^{\mathrm{E}}_2\leq\frac{\alpha}{1-\rho}=2\frac{q(1-q)}{p(p-q)},
\end{equation}
and it is also worth noting that, in the regime $q\ll p$, this bound is tight.

Concerning the relative biasing errors, as before, we have
$$\frac{\abs{ \expecin{ \nu_{\mathrm{E}}}{\cT_{\cB}} - \expecin{\nu_1}{\cT_{\cB}} }}{ \expecin{ \nu_{\mathrm{E}}}{\cT_{\cB}} } = \frac{p}{p+q}.$$
Recall that $T_1= T_1^{\mathrm{E}}=2/q$, hence the condition $p^+T^{\mathrm{E}}_1<1$ is never satisfied when $0<q<p<1$. Nevertheless, since $T^{\mathrm{E}}_2=2q/p^2$, the condition $p^+T^{\mathrm{E}}_2<1$ is satisfied as soon as
$q<p/2$. Thus, under this condition, Theorem \ref{Prop:EqGene} yields
$$\frac{\abs{ \expecin{ \nu_{\mathrm{E}}}{\cT_{\cB}} - \expecin{\nu_2}{\cT_{\cB}} }}{ \expecin{ \nu_{\mathrm{E}}}{\cT_{\cB}} } = \frac{q}{p+q},$$
and for the right-hand side
$$ 2\frac{p^+T^{\mathrm{E}}_2}{1-p^+T^{\mathrm{E}}_2}=4\frac{q}{p-2q}.$$
We can remark that, in the regime $q\ll p$, up to a multiplicative factor
equal to 4, the upper-bound is sharp. Finally, since $T_2=2\max(1-q/p,q/p)/p$, the condition $p^+T_2<1$ is never fulfilled, and this illustrates the
importance of using $T^{\mathrm{E}}_{2}$ rather than
$T_2$ to measure the convergence time to the QSD.

\subsection{On the choice of $1/p^+$ to measure the reaction time}\label{sec:toymodel2}

The objective of this section is to answer to two questions related to
the three quantities: $\probin{\nu_{\mathrm{Q}}}{Y_1 \in \cB}$,
$\probin{\pi_{0 | \mathcal A}}{Y_1 \in \cB}$ and $p^+$ defined
by~\eqref{eq:p+}, and their use as a measure of the mean reaction time to $\cB$. The first question is:
Is $\probin{\nu_{\mathrm{Q}}}{Y_1 \in \cB}$ always larger than
$\probin{\pi_{0 | \mathcal A}}{Y_1 \in \cB}$? For the
toy model introduced in \Cref{SSSec:ToyModelOne} the probability $p =
\probin{\nu_{\mathrm{Q}}}{Y_1 \in \cB}$ indeed satisfies $p = p^{+} >\probin{\pi_{0 | \mathcal A}}{Y_1 \in \cB} $. 
The second and more important question is: Is $1/p^+$ a too pessimistic measure of the reaction time to
$\cB$, and, in particular, is the relative biasing error proportional to $p
T_{\mathrm{Q}}^{\mathrm{E}}$ or $\probin{\pi_{0 | \mathcal A}}{Y_1 \in \cB}
T_{\mathrm{Q}}^{\mathrm{E}}$ instead of the upper bound $p^+T^{\mathrm{E}}_{\mathrm{Q}}$ obtained in
Theorem~\ref{Prop:EqGene}? It turns out that the answers to both
questions are no, as will be shown in this section on a toy example. Finally, Remark \ref{kahcohdouzd} discusses the upper-bound given by \Cref{Cor:Geobound}. 

\paragraph{Setting.} Consider the Markov chain on $\set{1, 2,3}$ with $\cA=\set{1, 2}$ and $\cB=\set{3}$, with transition matrix 
\begin{equation}
K= \begin{bmatrix}
1-4 a& 3 a& a \\
2 b & 1-3 b & b \\
 a&a&1-2a
\end{bmatrix} ,
\end{equation}
where $0<b<a<1/4$, so that $p^+:=\sup_{x\in\cA}\probin{x}{Y_1 \in \cB} = a$. One can also notice that $\nu_{\mathrm{E}}=[1/2,1/2]$.

\paragraph{Properties of the killed chain and QSD.}
Consider the process killed when leaving $\cA$, whose sub-stochastic transition matrix is given by
\begin{equation}
K_{\cA}=
\begin{bmatrix}
1-4 a& 3 a\\
2 b& 1-3b 
\end{bmatrix} .
\end{equation}
The eigenvalues of $K_{\cA}$ are $\lambda_{1/2}= 1 - \frac{4a+3b \mp \sqrt{16 a^2+9b^2}}{2}$, with $\lambda_2 < \lambda_1$. Computing the left eigenvectors associated to these eigenvalues, one can check that the Markov chain admits a unique quasi-stationary distribution in $\cA$, namely
\begin{equation}
\nu_{\mathrm{Q}} = \frac{1}{a-b}[p-b, a-p] , 
\label{Eq:ExampleSQSD}
\end{equation}
 which is associated to
the largest eigenvalue $\lambda_1 = 1-p$, where
$$p=\probin{\nu_{\mathrm{Q}}}{Y_1 \in \cB}=\frac{4a+3b - \sqrt{16 a^2+9b^2}}{2}.$$ 
Indeed, is is readily seen
that for $0<b<a$, $\nu_{\mathrm{Q}}$ is a probability distribution since $p-b>0$ and
$a-p>0$. The left eigenvector corresponding to the eigenvalue
$\lambda_2$ cannot be chosen to be nonnegative so that, according to \Cref{Lemma:KilledEigen}, $\nu_{\mathrm{Q}}
$ is the unique quasi-stationary distribution.
Now, using Lemma \ref{Lemma:QSDandKilling}, one has
\begin{equation}
\expecin{\nu_{\mathrm{Q}}}{\cT_{\cB}}= \frac{1}{p} > \frac{2}{3 b} .
\label{Eq:ExBoundQSD}
\end{equation}
\paragraph{Properties of $\pi_{0|\cA}$.}
From $K$ we obtain $\pi_0=[5b,7a,6b]/(7a+11b)$ and deduce 
\begin{equation}
\pi_{0 | \mathcal A} = \frac{1}{7a+5b} [5b, 7a ] .
\end{equation}
Therefore 
\begin{equation}
\probin{\pi_{0 | \mathcal A}}{Y_1 \in \cB} = \frac{ 12 ab }{7a+5b}  .
\label{ksjcxbnaksjnc}
\end{equation}
When $0<b<a/5$, one always has
\begin{equation}
\probin{\pi_{0 | \mathcal A}}{Y_1 \in \cB} > 3b/2 >\probin{\nu_{\mathrm{Q}}}{Y_1 \in \cB} .
\label{hscbkajsbc}
\end{equation}
This answers negatively to the first question asked at the beginning
of this section.

\paragraph{Computation of the relative biasing error~\eqref{EqExactBiaisTrace}.}
For this Markov chain, the Hill relation with the reactive entrance
distribution (using Equation~\eqref{EqHillEntranceDist} with $f = \mathds{1}_{\cA}$) gives
\begin{equation}
\expecin{\nu_{\mathrm{E}}}{T_\cB} = \frac{1}{\probin{\pi_{0 | \mathcal A}}{Y_1 \in \cB}}
=\frac{7a+5b}{ 12 ab } .
\end{equation}
Therefore, if $0<b<a/5$,
\begin{equation}
\expecin{\nu_{\mathrm{E}}}{T_\cB} < \frac{2}{3b} < \expecin{\nu_{\mathrm{Q}}}{T_\cB}  .
\end{equation}
Since $\probin{\pi_{0 | \mathcal A}}{Y_1 \in \cB}
>\probin{\nu_{\mathrm{Q}}}{Y_1 \in \cB}$, the relative biasing error between mean hitting times satisfies
\begin{align}
\label{Eq:ExampleTR}
\abs{ \frac{ \expecin{\nu_{\mathrm{E}} }{\cT_{\cB}} - \expecin{\nu_{\mathrm{Q}}}{\cT_{\cB}} } {\expecin{\nu_{\mathrm{E}}}{\cT_{\cB}}}} &= \frac{ \probin{\pi_{0 | \mathcal A}}{Y_1 \in \cB} }{\probin{\nu_{\mathrm{Q}}}{Y_1 \in \cB} } -1 .
\end{align}

\paragraph{Relaxation time $T^{\mathrm{E}}_\mathrm{Q}$.}
For this Markov chain, one can compute the relaxation time
\begin{align}
T^{\mathrm{E}}_{\mathrm{Q}} &= \norm{\nu_{\mathrm{E}} (\id_{\cA}- K_{\cA})^{-1}(\id_{\cA}- \mathds{1}_{\cA} \otimes \nu_{\mathrm{Q} })}.
\end{align}
Indeed, since 
\begin{equation}
(\id_{\cA}- K_{\cA})^{-1} =\frac{1}{6ab}\begin{bmatrix}
3b &3a\\
2b	& 4a
\end{bmatrix} ,
\end{equation}
and as $\nu_{\mathrm{Q} }$ is given by~\eqref{Eq:ExampleSQSD}, we are led to
\begin{equation}
(\id_{\cA}- K_{\cA})^{-1}(\id_{\cA}- \mathds{1}_{\cA} \otimes \nu_{\mathrm{Q} })
=\frac{1}{6ab(a-b)} \begin{bmatrix}
6ab -3p(a+b) & - 6ab + 3p(a+b)\\
6ab -2p(2a+b)	& -6ab +2p(2a+b)
\end{bmatrix} 
\label{sjcnaznc}
\end{equation}
so that, since $\nu_{\mathrm{E}}=[\frac 1 2, \frac 1 2]$,
$$\nu_{\mathrm{E}} (\id_{\cA}- K_{\cA})^{-1}(\id_{\cA}- \mathds{1}_{\cA}
\otimes \nu_{\mathrm{Q} })= \frac{12 ab - p(7a+5b)}{12ab(a-b)}\left[1,-1\right]
$$
and
\begin{align}
T^{\mathrm{E}}_{\mathrm{Q}}& =\left|\frac{12 ab - p(7a+5b)}{6ab(a-b)}\right|.
\end{align}
Since $p<3b/2$ by \eqref{Eq:ExBoundQSD}, one gets that, when $0<b<a/5$,
\begin{equation}
T^{\mathrm{E}}_{\mathrm{Q}} = \frac{12 ab - p(7a+5b)}{6ab(a-b)} < \frac{2}{a-b} .
\label{Eq:ExampleLRT}
\end{equation}

\paragraph{Is $p T^{\mathrm{E}}_\mathrm{Q}$ an upper bound for~\eqref{EqExactBiaisTrace}?}
Using \eqref{Eq:ExBoundQSD}, one obtains
\begin{equation}
\frac{1}{pT^{\mathrm{E}}_{\mathrm{Q}}}> \frac{a-b}{3 b} .
\label{Eq:ExampleBAB}
\end{equation}
From~\eqref{ksjcxbnaksjnc}, \eqref{hscbkajsbc}, and \eqref{Eq:ExampleTR}, one also deduces
\begin{align}
\abs{ \frac{ \expecin{\nu_{\mathrm{E}} }{\cT_{\cB}} -
  \expecin{\nu_{\mathrm{Q}}}{\cT_{\cB}} }
  {\expecin{\nu_{\mathrm{E}}}{\cT_{\cB}}}} & > \frac{12 ab}{7a+5b}
                                           \frac{2}{3b} -1=
                                           \frac{a-5b}{7a+5b}  .
\label{Eq:ExampleBIB}
\end{align}
Combining \eqref{Eq:ExampleBAB} and \eqref{Eq:ExampleBIB},
one can check that
\begin{align}
\frac{1}{pT^{\mathrm{E}}_{\mathrm{Q}}}\abs{ \frac{ \expecin{\nu_{\mathrm{E}} }{\cT_{\cB}} - \expecin{\nu_{\mathrm{Q}}}{\cT_{\cB}} } {\expecin{\nu_{\mathrm{E}}}{\cT_{\cB}}}} 
&> \frac{a-b}{3 b} \frac{a-5b}{7a+5b}= \frac{a}{3b} \left(1-\frac{b}{a}\right) \frac{1- 5\frac{b}{a}}{7+5 \frac{b}{a}}
\end{align}
which is unbounded when $b=o(a)$. Therefore,
$pT^{\mathrm{E}}_{\mathrm{Q}}$ is not a bound for the relative biasing
error~\eqref{EqExactBiaisTrace}.

\paragraph{Is $ \probin{\pi_{0 |
    \mathcal A}}{Y_1 \in \cB} T^{\mathrm{E}}_{\mathrm{Q}}$ an upper bound for~\eqref{EqExactBiaisTrace}?}
One has
\begin{align}
\frac{1}{ \probin{\pi_{0 | \mathcal A}}{Y_1 \in \cB} T^{\mathrm{E}}_{\mathrm{Q}}}\abs{ \frac{ \expecin{\nu_{\mathrm{E}} }{\cT_{\cB}} - \expecin{\nu_{\mathrm{Q}}}{\cT_{\cB}} } {\expecin{\nu_{\mathrm{E}}}{\cT_{\cB}}}} &>
\frac{(a-b)(a-5b)}{24ab}
\end{align}
and the right-hand side is unbounded when $b=o(a)$. Therefore, $ \probin{\pi_{0 |
    \mathcal A}}{Y_1 \in \cB} T^{\mathrm{E}}_{\mathrm{Q}}$ is not either a bound for
the relative biasing error~\eqref{EqExactBiaisTrace}.

In conclusion, for this Markov chain, the relative biasing error is not proportional to $ p T^{\mathrm{E}}_{\mathrm{Q}}$ nor $\probin{\pi_{0 | \mathcal A}}{Y_1 \in \cB} T^{\mathrm{E}}_{\mathrm{Q}}$ in the regime $b= o(a)$.
This answers negatively to the second question asked at the beginning
of this section, and illustrates again the sharpness of our bias estimate.

\begin{remark}[About \Cref{Cor:Geobound}.]\label{kahcohdouzd}

Let  $\nu_{1/2}$, $u_{1/2}$ denote respectively left and right eigenvectors of $K_\cA$ associated with the eigenvalues $\lambda_{1/2}$. We can take
$$
\nu_1 =  \frac{1}{a-b}\left[p-b,a-p\right], \qquad u_1= \frac{1}{2p-(4a+3b)}\left[p-(3a+3b),p-(4a+2b)\right]^T $$
and
$$\nu_2 =  \frac{1}{a-b}\left[(4a+2b)-p,p-(3a+3b)\right], \qquad u_2 = \frac{1}{2p-(4a+3b)}\left[p-a,p-b\right]^T$$
so that $\nu_{\mathrm{Q}}= \nu_1$. Next, note that the law of $Y_n$ conditioned to stay in $\cA$  with initial distribution $\mu$ (row vector) can be written
\begin{equation}
 \cL^\mu(Y_n \vert \cT_{\cB}> n) = \frac{ \mu K_\cA^n }{ \mu K_\cA^n [1,1]^T}.
\end{equation}
Using the spectral decomposition
\begin{equation}
K_\cA=\lambda_1 u_1  \nu_{\mathrm{Q}} + \lambda_2 u_2  \nu_2,
\end{equation}
one obtains
\begin{equation}
  \cL^\mu(Y_n \vert \cT_{\cB}> n)  =  \nu_{\mathrm{Q}}+\frac{1}{1+\left(\frac{\lambda_1}{\lambda_2}\right)^n\frac{\mu u_1}{\mu u_2}}\left(\nu_2-\nu_{\mathrm{Q}}\right),
\end{equation}
and, therefore,
\begin{equation}
\norm{  \cL^\mu(Y_n \vert \cT_{\cB}> n) -  \nu_{\mathrm{Q}}} =\frac{1}{\left|1+\left(\frac{\lambda_1}{\lambda_2}\right)^n\frac{\mu u_1}{\mu u_2}\right|}\norm{\nu_2-\nu_{\mathrm{Q}}}.
\end{equation}
From now on, let us focus on the case $b=o(a)$. Then, we have in particular that $p\sim 3b/2$, $\lambda_1\sim 1$, and $\lambda_2\sim 1-4a>0$, so that $\rho:={\frac{\lambda_2}{\lambda_1}}\sim 1-4a$. One may also notice that
$$\norm{\nu_2-\nu_{\mathrm{Q}}}=\frac{2(4a+3b-2p)}{a-b}\sim 8.$$
Next, let us successively consider the initial conditions $x=1$ and $x=2$. On the one hand, if $\mu=\delta_1=[1,0]$, then 
$$\frac{\mu u_1}{\mu u_2}=\frac{\delta_1 u_1}{\delta_1 u_2}=\frac{(3a+3b)-p}{a-p}\sim 3>0,$$
which yields, for all $n\geq 0$,
\begin{equation}
\norm{  \cL^1(Y_n \vert \cT_{\cB}> n) -  \nu_{\mathrm{Q}}} =\frac{1}{1+\left(\frac{\lambda_1}{\lambda_2}\right)^n\frac{\delta_1 u_1}{\delta_1 u_2}}\norm{\nu_2-\nu_{\mathrm{Q}}}\leq \frac{\delta_1 u_2}{\delta_1 u_1}\norm{\nu_2-\nu_{\mathrm{Q}}}\rho^n.
\end{equation}
On the other hand, if $\mu=\delta_2=[0,1]$, then 
$$\frac{\mu u_1}{\mu u_2}=\frac{\delta_2 u_1}{\delta_2
  u_2}=\frac{p-(4a+2b)}{p-b}\sim\frac{-8a}{b},$$
which is smaller than $-1$ in the regime $b=o(a)$
and it is readily seen that, for all $n\geq 0$,
\begin{equation}
\norm{  \cL^2(Y_n \vert \cT_{\cB}> n) -  \nu_{\mathrm{Q}}} =\frac{1}{\left(\frac{\lambda_1}{\lambda_2}\right)^n\left|\frac{\delta_2 u_1}{\delta_2 u_2}\right|-1}\norm{\nu_2-\nu_{\mathrm{Q}}}\leq \frac{1}{\left|\frac{\delta_2 u_1}{\delta_2 u_2}\right|-1}\norm{\nu_2-\nu_{\mathrm{Q}}}\rho^n.
\end{equation}
Putting all things together, we conclude that Assumption \ref{Ass:GeoErgo} is fulfilled, namely
$$\forall x \in \cA, \, \forall n \ge0, \, \norm{ \cL^x(Y_n \vert \cT_{\cB}> n) - \nu_{\mathrm{Q}}}  \leq \alpha \rho^n,$$
with $\rho={\frac{\lambda_2}{\lambda_1}}\sim 1-4a$, and
$$\alpha:=\max\left(\frac{1}{\left|\frac{\delta_2 u_1}{\delta_2 u_2}\right|-1},\frac{\delta_1 u_2}{\delta_1 u_1}\right)\norm{\nu_2-\nu_{\mathrm{Q}}}=\frac{\delta_1 u_2}{\delta_1 u_1}\norm{\nu_2-\nu_{\mathrm{Q}}}\sim\frac{8}{3}.$$
In particular, considering the upper bound in \Cref{Cor:Geobound}, we have $\frac{\alpha}{1-\rho}\sim\frac{2}{3a}>2$ since $a<1/4$, and 
$$\inf_{0< c <1}\frac{2}{1- c}\ceil[\Big]{ \frac{\ln(c \alpha^{-1})}{\ln(\rho)} }\xrightarrow[a\to\frac{1}{4}]{}2.$$
Considering the relaxation times in \Cref{Cor:Geobound}, recall from \eqref{Eq:ExampleLRT} that
$$T^{\mathrm{E}}_{\mathrm{Q}} = \frac{12 ab - p(7a+5b)}{6ab(a-b)}\sim\frac{1}{4a},$$
and, by \eqref{sjcnaznc},
$$T_{\mathrm{Q}} =\frac{1}{3ab(a-b)}\max\left(|6ab -3p(a+b)|,|6ab -2p(2a+b)|\right)\sim\frac{1}{2a}\xrightarrow[a\to\frac{1}{4}]{}2.$$
This gives an example where the upper-bound in \Cref{Cor:Geobound} is reached by the right-hand term, and not by $\alpha/(1-\rho)$.

\end{remark}


\section{About the reversibility of the reactive entrance process}
\label{SSec:ExMarkovNonRev}


We consider the situation where $ \cE=\set{1, 2, \ldots,5}$ endowed with the discrete topology. Let $a,b,c,d$ be four strictly positive real numbers attached to the edges of the weighted undirected graph $G$ on \Cref{FigCounterExampleNonRev}. For $i,j$ in $ \cE$, the weight $w_{ij}$ is the value of the edge $(i,j)$ if it exists and zero otherwise. From this graph, let us consider the Markov chain on $ \cE$ with transition probability matrix
\begin{equation}
K_{ij} = \frac{w_{ij}}{\sum_{k=1}^{5} w_{ik}} \quad \forall i,j \in  \cE  .
\end{equation}
Therefore, the transition matrix is
\begin{equation}
K=\begin{bmatrix}
0 & \frac{a}{a+b+2d} &\frac{b}{a+b+2d} &\frac{d}{a+b+2d} &\frac{d}{a+b+2d}\\
\frac{a}{a+c} & 0 & 0 & 0 & \frac{c}{a+c}\\
\frac{b}{b+c} & 0 & 0 & \frac{c}{b+c}& 0\\
\frac{d}{c+d} & 0 &\frac{c}{c+d}& 0 & 0\\
\frac{d}{c+d} & \frac{c}{c+d} & 0& 0 & 0
\end{bmatrix} .
\end{equation}

\begin{figure}
\centering
\begin{tikzpicture}
\tikzset{node styleA/.style={state, 
 text=blue!30!white, 
 fill=gray!30!black,minimum size=1cm}}
 
 \tikzset{node styleB/.style={state, 
 text=red!30!white, 
 fill=gray!30!black,minimum size=1cm}}
 
 \tikzset{node styleS/.style={state, 
 text=gray!30!white, 
 fill=gray!30!black,minimum size=1.cm}}
 
 \node[node styleA] (S1) {$1$};
 \node[node styleA, below=1cm of S1] (S2) {$2$};
 \node[node styleA, above=1cm of S1] (S3) {$3$};

 \node[node styleS, below right=0.5cm and 3.5cm of S3] (S4) {$4$};
 \node[node styleS, below=1cm of S4] (S5) {$5$};

 \draw[auto=left] (S2) edge node {$a$} (S1);
 \draw[auto=left] (S1) edge node {$b$} (S3);
 \draw[auto=left] (S3) edge node {$c$} (S4);
 \draw[auto=left] (S2) edge node {$c$} (S5);
 \draw[auto=left] (S1) edge node {$d$} (S4);
 \draw[auto=left] (S1) edge node {$d$} (S5);

 \end{tikzpicture}
 \caption{Graph of a reversible Markov chain on $\set{1,2,\ldots,5}$ which is non-reversible with respect to its reactive entrance distribution in $\cA=\set{1,2,3}$.}\label{FigCounterExampleNonRev}
 \end{figure}
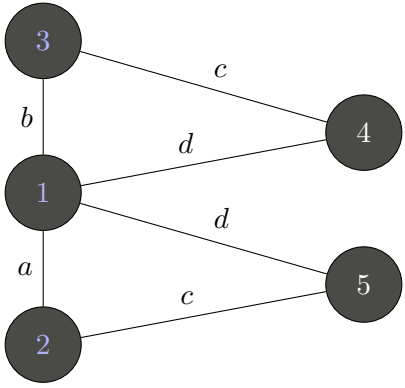

Assumption \ref{Ass:K} is clearly satisfied for this Markov chain which, by construction, is reversible with respect to its invariant distribution $\pi_0$ given by 
\begin{equation}
\pi^i_{0} = \frac{ \sum_{k=1}^5 w_{ik} }{\sum_{\ell=1}^5\sum_{k=1}^5 w_{\ell k}}\qquad \forall i\in  \cE .
\end{equation}
Thus, 
\begin{equation}
\pi_0=\frac{1}{2a+2b+4c+4d} \left[a+b+2d, a+c,b+c,c+d,c+d\right].
\end{equation}
 Taking $\cA=\set{1,2,3}$ and $\cB=\set{4,5}$, the reactive 
 entrance distribution given by \eqref{EqLinkDist} is
\begin{equation}
\nu_{\mathrm{E}}=\frac{1}{2(c+d)} \left[2d, c,c\right].
\end{equation} 
One can check that the process described by $(Y^{\mathrm{E}}_n)$ is non-reversible with respect to $\nu_{\mathrm{E}}$.
Indeed, let us denote by $K^{\mathrm{E}}$ its $3\times 3$ transition matrix. To show that the entrance process is non-reversible with respect to $\nu_{\mathrm{E}}$, we just have to verify that $K^{\mathrm{E}}_{32} \neq K^{\mathrm{E}}_{23}$, which is true when $a\neq b$ since we deduce from~\Cref{Def:REProcess} that
\begin{equation}
K^{\mathrm{E}}_{32}=\frac{abcd+abc^2+ bc^2d}{(a+b+2d)(a+c)(b+c)(c+d)}  ,
\end{equation}
whereas
\begin{equation}
K^{\mathrm{E}}_{23} =\frac{abcd+abc^2+ ac^2d}{(a+b+2d)(a+c)(b+c)(c+d)}  .
\end{equation}

\section{Proofs}
\label{SSec:Proofs}

\subsection*{Proof of Lemma \ref{Lemma:}}
The fact that $\sup_{x\in \cA}\probin{x}{Y_1 \in \cB}>0$ is a direct consequence of Assumptions~\ref{Ass:K2} and~\ref{Ass:K3}.
 Next, suppose that for all $n \in \N$, we have $\inf_{x \in \cA} \probin{x}{\cT_{\cB}  \leq n}=0$. Then,  one could exhibit a sequence $(x_n) \in \cA$ such that, for all $n$, we would have $\probin{x_n}{\cT_{\cB}  \leq n }  \leq 1/n$. Hence, for all $m  \leq n$, $\probin{x_n}{\cT_{\cB}  \leq m}  \leq 1/n$. By compactness of $\cA$ (see Assumption~\ref{Ass:K0}), up to extracting a subsequence of $(x_n)$, there exists $ \overline{x} \in \cA$ such that $x_n \rightarrow \overline{x}$. Now, as  $\cA$ and  $\cB$ are disjoint compact sets, the mapping $x\mapsto\mathbf{1}_{\cA}(x)$ is continuous and  
so is $x\mapsto K\mathbf{1}_{\cA}(x)$ by Assumption~\ref{Ass:K1}. Since $K=K_{\cA}$ on $\cA$ and $(x_n) \in \cA$, we deduce from \eqref{EqProbinGE} that the mapping $x \mapsto \probin{x}{\cT_{\cB}  \leq m} =1- K_{\cA}^{m}\mathds{1}_{\cA}(x) $ is continuous on $\cA$, so that
\begin{equation}
\forall m \in \N,\hspace{1cm} \probin{\overline{x}}{\cT_{\cB}  \leq m}  \leq \lim_{n \rightarrow \infty} \frac{1}{n} =0.
\end{equation}
Thus $\probin{\overline{x}}{\cT_{\cB} < \infty}=0$, which is in
contradiction with the $\pi_0$-irreducibility Assumption~\ref{Ass:K2} and Equation~\eqref{Eq:DefPhiIrreducibleMC}. Thus, we can  conclude that there exists an integer $n$ such that $\inf_{x \in \cA} \probin{x}{\cT_{\cB}  \leq n} > 0$.

\subsection*{Proof of Corollary \ref{Cor:Invert}}
Let us first prove that the operator $(\id_{\cA}-K_{\cA})$ is invertible. By \eqref{Eq:HittingTimes}, let $n_0$ be such that
$\inf_{x \in \cA} \probin{x}{\cT_{\cB}  \leq n_0} > 0$. Then, by definition of the operator norm defined in \eqref{Eq:DefNorm} and thanks to \eqref{EqProbinGE}, we are led to
\begin{equation}
\norm{K^{n_0}_{{\cA}}}_{\infty} 
= \supSur{x\in \cA} K^{n_0}_{\cA} \mathds{1}_{\cA}(x) = \sup_{x \in \cA}\probin{x}{\cT_{\cB} > n_0} <1 . \label{Eq:NormSmallOne}
\end{equation}
Hence, the inequality
\begin{equation}
\sum_{n=0}^\infty \norm{K^n_{\cA}}_{\infty}=\sum_{m=0}^{n_0-1}\sum_{\ell=0}^\infty \norm{K^{\ell n_0+m}_{\cA}}_{\infty}\leq\sum_{m=0}^{n_0-1}\norm{K^m_{\cA}}_{\infty}\sum_{\ell=0}^\infty (\norm{K^{n_0}_{\cA}}_{\infty})^\ell
 \label{aicjacj}
\end{equation}
ensures that the series $\sum_n \norm{K^n_{\cA}}_{\infty}$ converges and $(\id_{\cA}-K_{\cA})$ is indeed invertible as an operator on the Banach space $(B(\cA,\R),\norm{\cdot}_{\infty})$. The uniqueness of the solution
to~\eqref{EqNonDirichletBoundaryValueProblem}
is then immediate since the operator $(\id_{\cA} -
K_{ \cA})$ is invertible. Moreover,
\begin{align}
r(x)&= (\id_{\cA}- K_{\cA})^{-1} g(x)= \sum_{n \geq 0} K^n_{\cA} g(x)= \sum_{n \geq 0} \expecin{x}{g(Y_n) \mathbf{1}_{\cT_{\cB}>n}}.
\end{align}
To deduce \eqref{EqSolutionNonDirichletBoundaryValueProblem}, it remains to apply Fubini's theorem, which is possible since 
$$\sum_{n \geq 0} \expecin{x}{\left|g(Y_n) \mathbf{1}_{\cT_{\cB}>n}\right|}\leq \norm{g}_{\infty}\sum_{n \geq 0}K^n_{\cA} \mathds{1}_{\cA}(x)\leq \norm{g}_{\infty}\sum_n \norm{K^n_{\cA}}_{\infty}<\infty.$$

\subsection*{Proof of Proposition \ref{prop:pi0}}
Since $\pi_0 K=\pi_0$, one just has to consider the two-block decomposition~\eqref{EqTwoBlocks} to deduce that
$$\pi_0^{\cA} (\id_{\cA}- K_{\cA}) = \pi_0^{\cB} K_{\cB \cA}.$$
From this, by switching the roles of $\cA$ and $\cB$ and taking into account that~$(\id_{\cB}- K_{\cB})$ is invertible, we immediately get 
$$\pi_0^{\cB} = \pi_0^{\cA} K_{\cA \cB} (\id_{\cB}- K_{\cB})^{-1},$$
so that putting all things together yields
$$\pi_0^{\cA} (\id_{\cA}- K_{\cA}) = \pi_0^{\cA} K_{\cA \cB} (\id_{\cB}- K_{\cB})^{-1} K_{\cB \cA}.$$
Since $\pi_0(\cA)>0$, it suffices to divide both terms by $\pi_0(\cA)$ to obtain \eqref{Eq:pi0trace}.

\subsection*{Proof of the equivalence in Definition \ref{Def:REProcess}}
Let us check that both definitions coincide. Using the strong Markov property, one obtains that for all $x\in \cA$ and all $\cC \in \mathscr{B}(\cA)$
\begin{equation}
 \probin{x}{Y_{\cT^{\cA}} \in \cC} =\int_{\cB} \probin{x}{Y_{\cT_{\cB}^{+}} \in dy} \probin{y}{Y_{\cT_{\cA}^{+}} \in \cC} .
\end{equation}
The first term in this integral can be decomposed as
$$\probin{x}{Y_{\cT_{\cB}^{+}} \in dy}=\sum_{n=0}^\infty\int_{\cA}K_{\cA}^n(x,dx')K_{\cA\cB}(x',dy),$$
which amounts to saying that
$$\probin{x}{Y_{\cT_{\cB}^{+}} \in dy}=\left[ (\id_{\cA}-K_{\cA})^{-1} K_{\cA \cB}\right](x,dy).$$
In the same vein, we may write 
$$\probin{y}{Y_{\cT_{\cA}^{+}} \in \cC}=\left[ (\id_{\cB}-K_{\cB})^{-1} K_{\cB \cA}\right](y,\cC),$$
so that, finally,
$$ \probin{x}{Y_{\cT^{\cA}} \in \cC} =\left[ (\id_{\cA}-K_{\cA})^{-1} K_{\cA \cB}(\id_{\cB}-K_{\cB})^{-1} K_{\cB \cA}\right](x,\cC).$$
This shows the equivalence between the formulations of $K^\mathrm{E}$ in \Cref{Def:REProcess}.

\subsection*{Proof of Proposition \ref{Prop:ReactiveEntranceHarris}}
The chain $(Y^\mathrm{E}_n)$ inherits the positive Harris recurrent
property from the original Markov chain $(Y_n)$. The proof can be done
in two steps. First, let us show that the Markov chain
$Z_n=(Y_n,Y_{n+1})$ defined on $\mathcal E \times \mathcal E$ is
Harris recurrent for the probability measure $\pi_0 \otimes K$, where
$\pi_0 \otimes K (dx_0,dx_1) = \pi_0(dx_0) K(x_0,dx_1)$ and $\pi_0$ is
the equilibrium measure for $(Y_n)$. Let us
consider a measurable set $\overline{\mathcal C} \subset \mathcal E
\times \mathcal E$ such that $\pi_0 \otimes K(\overline{\mathcal C}) >
0$. We would like to check that, almost surely, $(Z_n)$ visits infinitely often the set
$\overline{\mathcal C}$. Let us define
$$\mathcal C=\{x \in \mathcal E, \, K(x, \overline{\mathcal C}_x) >0\} \text{ where }
\overline{\mathcal C}_x=\{y \in \mathcal E, \, (x,y) \in \overline{\mathcal C}\}.$$
One can check that $\pi_0(\mathcal C) > 0$. Indeed, $0 <\pi_0 \otimes
K(\overline{\mathcal C})= \int_{x \in \mathcal C} \pi_0(dx) K(x,
\overline{\mathcal C}_x)$ which implies $\pi_0(\mathcal C) > 0$.
Let us now introduce, for $k \ge 1$
$$\mathcal C_k=\{x \in \mathcal E, \, K(x, \overline{\mathcal C}_x) \ge 1/k\}$$
so that $\mathcal C=\cup_{k \ge 1} \mathcal C_k$ (increasing
union). One has $\lim_{k \to \infty} \pi_0(\mathcal C_k)=\pi_0(\mathcal
C)>0$ and let us therefore fix a $k \ge 1$ such that $\pi_0(\mathcal
C_k)>0$. By the Harris recurrence property and Remark~\ref{rem:psi}, almost surely, $(Y_n)$ visits infinitely
often the set $\mathcal C_k$, and each time $Y_n \in \mathcal C_k$, there is a
probability at least $1/k$ for $Y_{n+1}$ to be in
$\overline{\mathcal C}_{Y_n}$, and thus for $(Y_n,Y_{n+1})$ to be in~$\overline{\mathcal C}$. This implies that $(Z_n)$ visits
$\overline{\mathcal C}$ infinitely often, almost surely. This concludes the proof of
the Harris recurrence of $(Y_n,Y_{n+1})$. In particular,
$(Y_n,Y_{n+1})$ admits a unique invariant probability
measure, which obviously is $\pi_0 \otimes K$, for which the Harris
recurrence property holds, by Remark~\ref{rem:psi}.

In a second step, let us now conclude that
$(Y^\mathrm{E}_n)$ is Harris recurrent for the following probability measure on
$\mathcal A$: $\pi^{\mathcal B}_0 K_{\mathcal B A} / \pi^{\mathcal
  B}_0  K_{\mathcal B A} (\mathcal A)$. Let $ x \in \mathcal A$ and
$\mathcal C \subset \mathcal A$ such that $\pi^{\mathcal B}_0
K_{\mathcal B A}(\mathcal C)>0$. Let us fix the initial conditions $Y_0
= Y^\mathrm{E}_0 =x$,  and let us introduce $\overline{\mathcal C}=\mathcal B
\times \mathcal C$. One checks that $\pi_0 \otimes K
(\overline{\mathcal C}) = \pi^{\mathcal
  B}_0K_{\mathcal B \mathcal A}(\mathcal C) > 0$. Thus, almost surely, $(Y_n, Y_{n+1})$ visits infinitely
often $\overline{\mathcal C}$, and this in turn implies that $(Y^\mathrm{E}_n)$ visits
infinitely often~$\mathcal C$, which concludes the proof of the Harris
recurrence of $(Y^\mathrm{E}_n)$.

As a consequence, by Proposition \ref{oaihzdozihd}, $(Y^\mathrm{E}_n)$ admits, up to a multiplicative constant, a unique invariant measure. Since  
$$K^{\mathrm{E}} = (\id_{\cA}-K_{\cA})^{-1}K_{\cA\cB}(\id_{\cB}-K_{\cB})^{-1}K_{\cB \cA},$$
and, by \eqref{Eq:pi0trace},
$$\pi_{0 | \mathcal A}(\id_{\cA}-K_{ \cA}) =  \pi_{0 | \mathcal A} K_{ \cA \cB} (\id_{\cB}-K_{\cB})^{-1} K_{\cB \cA},$$
we see that this invariant measure is $\pi_{0 | \mathcal A}(\id_{\cA}-K_{ \cA})$. To normalize it, just notice that, via~\eqref{Eq:PropKAOverOne},
$$\pi_{0 | \mathcal A}(\id_{\cA}-K_{ \cA})\mathds{1}_{\cA}=\pi_{0 | \mathcal A}K_{\cA \cB } \mathds{1}_{\cB}=\probin{ \pi_{0 | \mathcal A}}{Y_1 \in \cB}.$$
To deduce \eqref{kajcpcj} from \eqref{EqLinkDist}, it suffices to apply \eqref{Eq:MRT}.

\subsection*{Proof of Remark \ref{Rem:EntranceStation}}
Let us give the proof of~\eqref{eq:pi0nuAnuB}. Notice first that
$$\pi_0(\cC)=\pi_{0 | \cA}(\cC) \pi_0(\cA) + \pi_{0 | \cB}(\cC)\pi_0(\cB).$$
 From Equation~\eqref{EqLinkDist}, we also know that 
 $$\pi_{0 | \mathcal A}(\cC)=  \left( \nu^{\cA}_{\mathrm{E}} (\id_{\cA}-K_{\cA})^{-1} \ind{\cA \cap
   \cC} \right) \probin{ \pi_{0 | \mathcal A}}{Y_1 \in \cB}.$$ 
 We thus have 
 $$\pi_0(\cC)=\left( \nu^{\cA}_{\mathrm{E}} (\id_{\cA}-K_{\cA})^{-1} \ind{\cA \cap
   \cC} \right) \probin{ \pi_{0}^{\mathcal A}}{Y_1 \in \cB} + \left( \nu^{\cB}_{\mathrm{E}} (\id_{\cB}-K_{\cB})^{-1} \ind{\cB \cap
   \cC} \right) \probin{ \pi_{0}^{\mathcal B}}{Y_1 \in \cA}.$$
 In addition, one can check that $\probin{ \pi_{0}^{\mathcal A}}{Y_1
  \in \cB} = \probin{ \pi_{0}^{\mathcal B}}{Y_1 \in \cA}$. Indeed,
from Equations~\eqref{Eq:PropKAOverOne}, \eqref{alzj} and~\eqref{Eq:StatPi0}, 
$$\probin{ \pi_{0}^{\mathcal A}}{Y_1
  \in \cB} =\pi_0^{\mathcal A} K_{\cA \cB} \ind{\cB}=\pi_0^{\mathcal
  A} (\id_{\cA}-K_{\cA}) \ind{\cA}=\pi_0^{\mathcal B} K_{\cB \cA} \ind{\cA}=\probin{ \pi_{0}^{\mathcal B}}{Y_1
  \in \cA}.$$
  Therefore, 
  $$\pi_0(\cC)= c^{-1} \bigpar{
  \nu_{\mathrm{E}}^{\cA}(\id_{\cA}-K_{\cA})^{-1} \ind{\cA \cap \cC}+
  \nu_{\mathrm{E}}^{\cB}(\id_{\cB}-K_{\cB})^{-1}\ind{\cB \cap \cC}}$$
where 
$$c^{-1}=\probin{ \pi_{0}^{\mathcal A}}{Y_1
  \in \cB} = \probin{ \pi_{0}^{\mathcal B}}{Y_1 \in \cA}$$ 
  is independent of $\cC$ and, necessarily,
$c=\expecin{ \nu_{\mathrm{E}}^{\cA}}{\cT_{\cB}}
  +\expecin{\nu_{\mathrm{E}}^{\cB}}{\cT_{\cA}}$ by considering
$\cC=\cE$ and \eqref{kajcpcj} since $\pi_0(\cE)=1$.

\subsection*{Proof of Lemma \ref{Lemma:KilledEigen}}
Recall that, for all $n\geq0$, \eqref{EqProbinGE} gives
\begin{equation}
\probin{\pi}{Y_n \in \cC,\cT_{\cB} >n}=\pi K_{ \cA}^n(\cC).
\label{uchozhcoaihjc}
\end{equation}
In particular, one has for any probability measure $\pi$ on $\cA$ and any $\cC \in \mathscr{B}(\cA)$
\begin{equation}
\pi K_{ \cA}(\cC) = \probin{\pi}{Y_1 \in \cC, \cT_{\cB}>1}.
\end{equation}
Thus, in view of \eqref{Eq:DefQSD}, if $\pi$ is a quasi-stationary distribution, this leads to
\begin{equation}
\pi K_{ \cA}= \probin{\pi }{ \cT_{\cB}>1} \pi  ,
\end{equation}
which shows that a quasi-stationary distribution is a left probability eigenmeasure associated to the eigenvalue $ \probin{\pi }{ \cT_{\cB}>1}$. Conversely, let us assume that there exists a probability measure $\pi$ on $\cA$ and a real number $\theta \geq 0$ such that $\pi K_{\cA} = \theta \pi$. Then, for any $n$, \eqref{uchozhcoaihjc} yields
$$\probin{\pi}{Y_n \in \cC,\cT_{\cB} >n}=\pi K^n_{\cA}(\cC) = \theta^n \pi(\cC)$$
and $\pi$ satisfies \eqref{Eq:DefQSD} with $\probin{\pi}{\cT_{\cB} > n}=\theta^n$.
As a consequence, showing the existence of a QSD amounts to asserting the existence of a left probability eigenmeasure for the kernel $K_{\cA}$.
Under \Cref{Ass:K}, the existence of such a QSD is ensured from \cite[Proposition 2.10]{Collet_Martinez_SanMartin_book} since $\cA$ is compact and $K$ is weak-Feller.

\subsection*{Proof of Lemma \ref{Lemma:QSDandKilling}}
For all $m,n \in \N$, \eqref{uchozhcoaihjc} gives
$$\probin{\nu_{\mathrm{Q}}}{\cT_{\cB} >m+n}=\nu_{\mathrm{Q}} K_{ \cA}^{m+n} \mathds{1}_{\cA}.$$
Taking into account that $\nu_{\mathrm{Q}}$ is a QSD, Lemma \ref{Lemma:KilledEigen} tells us that
$$\probin{\nu_{\mathrm{Q}}}{\cT_{\cB} >m+n}=\probin{\nu_{\mathrm{Q}}}{\cT_{\cB}>1}^{m+n},$$
which is the desired result.

\subsection*{Proof of Proposition \ref{Prop:InvMeasureReturn}}
Let us show that under~\Cref{Ass:K}, the $\pi$-return process is $\pi$-recurrent. Since
\begin{equation}
\expecin{x}{ \sum_{n =1}^\infty \mathbf{1}_{\braces{Y^\pi_n \in \cC}}}= \sum_{n =1}^\infty\probin{x}{Y^\pi_n \in \cC} \geq \sum_{n=1}^\infty \probin{x}{\cT_{\cB} = n} \pi(\cC) ,
\end{equation}
with $ \sum_{n=1}^\infty \probin{x}{\cT_{\cB} = n}=\infty$ by \eqref{kahcbikzck} and the fact that $(Y_n)$ is $\pi_0$-recurrent with $\pi_0(\cB)>0$, one obtains 
\begin{equation}
\expecin{x}{ \sum_{n =1}^\infty \mathbf{1}_{\braces{Y^\pi_n \in \cC}}}=\infty , \qquad \forall x \in \cA, \forall \cC \in \mathscr{B}(\cA) \text{ such that } \pi(\cC)>0 .
\end{equation}
Thus the $\pi$-return process is $\pi$-recurrent. Therefore, by Proposition \ref{oaihzdozihd}, it admits, up to a multiplicative constant, a unique invariant measure.
To check that $\pi (\id_{\cA}-K_{\cA})^{-1}$ is invariant, just notice that
\begin{equation}
\pi (\id_{\cA}-K_{\cA})^{-1} K^{\pi} = \pi (\id_{\cA}-K_{\cA})^{-1} K_{ \cA} +( \pi (\id_{\cA}-K_{\cA})^{-1} K_{\cA \cB}\mathds{1}_{\cB} ) \pi  .
\end{equation} 
Using \eqref{Eq:PropKAOverOne}, the right-hand side can be simplified as
\begin{equation}
\pi (\id_{\cA}-K_{\cA})^{-1} K^{\pi} = \pi (\id_{\cA}-K_{\cA})^{-1} K_{ \cA} + \pi .
\end{equation}
Therefore, one has
\begin{align}
\pi (\id_{\cA}-K_{\cA})^{-1} K^{\pi} &= \pi (\id_{\cA}-K_{\cA})^{-1} (K_{ \cA}-\id_{\cA})+ \pi (\id_{\cA}-K_{\cA})^{-1} + \pi= \pi (\id_{\cA}-K_{\cA})^{-1}.
\end{align}
Since \eqref{Eq:MRT} shows that $\expecin{\pi}{\cT_{\cB}}= \pi (\id_{\cA} - K_{\cA})^{-1} \mathds{1}_{\cA}$, the proof is complete.

\subsection*{Proof of Proposition \ref{PropHill}}
Since $R(\pi)$ is the stationary distribution of the $\pi$-return process with transition kernel~$K^{\pi}= K_{\cA} +(K_{\cA \cB} \mathds{1}_{\cB}) \otimes \pi $, \eqref{alzj} implies 
\begin{align}
R(\pi) (\id_{\cA}-K_{\cA}) &= ( R(\pi) K_{\cA \cB} \mathds{1}_{\cB} ) \pi  =\probin{R(\pi)}{Y_1 \in \cB}   \pi,
\end{align}
so that
\begin{equation}
\frac{R(\pi)}{\probin{R(\pi)}{Y_1 \in \cB}} = \pi (\id_{\cA}-K_{\cA})^{-1}  .
\label{EqProofReturnProcessHill}
\end{equation}
The Hill relation is then just a consequence of \eqref{lakcnlazknclckajcb}.

\subsection*{Proof of Proposition \ref{Prop:UniquStationaryReturnQSD}}
The transition kernel of the $\pi$-return process is $K^\pi = K_{ \cA} +( K_{\cA \cB}\mathds{1}_{\cB} ) \otimes \pi$, so one has $\pi=\pi K^\pi$  if and only if
$$\pi=\pi K_{ \cA}+( \pi K_{\cA \cB}\mathds{1}_{\cB} ) \pi=\pi K_{ \cA}+\probin{\pi}{ \cT_{\cB} =1 }\pi,$$
which is equivalent to say that $\pi K_{ \cA}=\probin{\pi}{ \cT_{\cB} >1 }\pi$ or,  by Lemma \ref{Lemma:KilledEigen}, that $\pi$ is a QSD for the process $(Y^{\mathrm{Q}}_n)$ killed when leaving $ \cA$. The Hill relation \eqref{EqHillQSD} is then a consequence of \eqref{eq:Hill}.

\subsection*{Proof of Lemma \ref{kajclajc}}
Since $\expecin{x}{\cT_{\cB}} = \sum_{k=0}^{\infty} \probin{x}{\cT_{\cB}>k}$, and
\begin{align}
 \probin{x}{\cT_{\cB}>k} &= \prod_{ \ell=1}^k \pcondin{x}{ \cT_{\cB} > \ell }{\cT_{\cB} >  \ell-1 } = \prod_{ \ell=1}^k \bigpar{1- \pcondin{x}{ \cT_{\cB} =  \ell }{\cT_{\cB} >  \ell-1 }} ,
\end{align} 
one obtains that
\begin{equation}
 \expecin{x}{\cT_{\cB}}  \geq \sum_{k=0}^{\infty} (1-p^+)^k = \frac{1}{p^+}.
\end{equation}

\subsection*{Proof of Proposition \ref{aicjapzcjpzc}}

Concerning~\eqref{EqHfReturnprime}, by the same reasoning as in the proof of Corollary~\ref{Cor:Invert}, Fubini's theorem yields 
\begin{align}
\expecin{x}{ \sum_{n = 0}^{\cT_{\cB}-1} \braces{ f(Y_n) - \nu_{\mathrm{Q}}f }} &= \sum_{n = 0}^{\infty} \expecin{x}{ \braces{ f(Y_n) - \nu_{\mathrm{Q}}f } \mathbf{1}_{\cT_{\cB} > n} } \\
&= \sum_{n = 0}^{\infty} \expecin{x}{ \braces{ f(Y^{\nu_{\mathrm{Q}}}_n) - \nu_{\mathrm{Q}}f } \mathbf{1}_{\cT_{\cB} > n} }  ,
\label{EqHfFirstPart}
\end{align}
where we have used that $\expecin{x}{f(Y_n) \mathbf{1}_{\cT_{\cB} > n} } =\expecin{x}{f(Y^{\nu_{\mathrm{Q}}}_n) \mathbf{1}_{\cT_{\cB} > n} } $.
 In addition,
\begin{align}
 \sum_{n = 0}^{\infty} \expecin{x}{ \braces{ f(Y^{\nu_{\mathrm{Q}}}_n) - \nu_{\mathrm{Q}}f } \mathbf{1}_{\cT_{\cB}  \leq n} } 
& = \sum_{n = 1}^{\infty} \econdin{x}{ f(Y^{\nu_{\mathrm{Q}}}_{n}) -
  \nu_{\mathrm{Q}}f }{\cT_{\cB}  \leq n} \probin{x}{ \cT_{\cB}
   \leq n} =0 . \label{EqHfStrongMarkov}
\end{align}
The last equality comes from the strong Markov property: indeed,  when the
${\nu_{\mathrm{Q}}}$-return process reaches~$\cB$, it is instantaneously redistributed
according to $\nu_{\mathrm{Q}}$, and then starting from the
QSD $\nu_{\mathrm{Q}}$, one has $\expecin{\nu_{\mathrm{Q}}}{ f(Y^{\nu_{\mathrm{Q}}}_n) - \nu_{\mathrm{Q}}f }=0$.
Equation~\eqref{EqHfReturnprime} is then obtained by adding~\eqref{EqHfFirstPart} and~\eqref{EqHfStrongMarkov}.
Let us finally notice that
\begin{align}
T^{\mathrm{E}}_{\mathrm{Q}} &\le T_{\mathrm{Q}}= \norm{H_{\mathrm{Q}}}_\infty=\norm{(\id_{\cA}-K_{\cA})^{-1}(\id_{\cA}-\mathds{1}_{\cA}\otimes \nu_{\mathrm{Q}})}_\infty\leq 2 \norm{(\id_{\cA}-K_{\cA})^{-1}}_{\infty}
\end{align} 
is finite under~\Cref{Ass:K}, by \eqref{aicjacj}.

\subsection*{Proof of Lemma \ref{Lemma:NuQConditioned}}
By definition of the $\nu_{\mathrm{Q}}$-return process, we have
\begin{equation}
\expecin{\mu}{ f(Y^{\nu_{\mathrm{Q}}}_n) \mathbf{1}_{\cT_{\cB}> n} } = \expecin{\mu}{ f(Y_n) \mathbf{1}_{\cT_{\cB}> n} } ,
\label{Eq:PppReturn}
\end{equation}
whereas
\begin{align}
\expecin{\mu}{ f(Y^{\nu_{\mathrm{Q}}}_n) \mathbf{1}_{\cT_{\cB} \leq n} } 
&= \sum_{m=0}^n \expecin{\mu}{ f(Y^{\nu_{\mathrm{Q}}}_n) \mathbf{1}_{\cT_{\cB} = m} }\\
&= \sum_{m=0}^n \expecin{\mu}{ \expecin{\nu_{\mathrm{Q}}}{ f(Y^{\nu_{\mathrm{Q}}}_{n-m}) } \mathbf{1}_{\cT_{\cB} = m }}= (\nu_{\mathrm{Q}}f ) \probin{\mu}{\cT_{\cB} \leq n}  .
\label{Eq:QqqReturn}
\end{align}
As a consequence, summing \eqref{Eq:PppReturn} and \eqref{Eq:QqqReturn}, one has
\begin{align}
\expecin{\mu}{f(Y^{\nu_{\mathrm{Q}}}_n) - \nu_{\mathrm{Q}}f } &= 
\expecin{\mu}{ f(Y_n) \mathbf{1}_{\cT_{\cB}>n}}+ ( \nu_{\mathrm{Q}}f ) \probin{\mu}{\cT_{\cB} \leq n} - \nu_{\mathrm{Q}}f \\
&= \expecin{\mu}{ f(Y_n) \mathbf{1}_{\cT_{\cB}>n}} - ( \nu_{\mathrm{Q}}f ) \probin{\mu}{ \cT_{\cB}>n}\\
&= \bigpar{ \econdin{\mu}{ f(Y_n) }{ \cT_{\cB}>n} - \nu_{\mathrm{Q}}f } \probin{\mu}{ \cT_{\cB}>n}  .
\end{align}
This yields the first claim of Lemma~\ref{Lemma:NuQConditioned}. The second claim is obtained by using the trivial bound $\probin{\mu}{ \cT_{\cB}>n}\leq1$.

\subsection*{Proof of Lemma \ref{lzeknvclakev}}

Since $\pi_{0 | \mathcal A}-\nu_{\mathrm{Q}}= \pi_{0 | \mathcal A}
(\id_{\cA}-\mathds{1}_{\cA}\otimes \nu_{\mathrm{Q}})$, we get, using~\eqref{EqKernelHQOp} for $H_\mathrm{Q}$:
\begin{align}
\pi_{0 | \mathcal A}-\nu_{\mathrm{Q}} &= \pi_{0 | \mathcal A} (\id_{\cA}-K_{\cA} )(\id_{\cA}-K_{\cA})^{-1} (\id_{\cA}-\mathds{1}_{\cA} \otimes \nu_{\mathrm{Q}})= \pi_{0 | \mathcal A} (\id_{\cA}-K_{\cA} )H_{\mathrm{Q}} .
\end{align}
Now, from~\eqref{EqLinkDist}, $\probin{ \pi_{0 | \mathcal A}}{Y_1 \in
  \cB} \nu_{\mathrm{E}}= \pi_{0 | \mathcal A} (\id_{\cA}-K_{ \cA})$, and
it follows that
$$\pi_{0 | \mathcal A}-\nu_{\mathrm{Q}}  = \probin{ \pi_{0 | \mathcal A}}{Y_1 \in
  \cB} \nu_{\mathrm{E}}H_{\mathrm{Q}}.$$
This yields~\eqref{EqDiffMeasure} by taking the total variation norm.

\subsection*{Proof of Theorem \ref{Prop:EqGene}}
From \eqref{EqExactBiaisTrace}, the relative biasing error is bounded by
\begin{align}
&\abs{ \frac{ \expecin{\nu_{\mathrm{E}}}{ \sum_{n=0}^{\cT_{\cB} -1}
  f(Y_n) } - \expecin{\nu_{\mathrm{Q}}}{ \sum_{n=0}^{\cT_{\cB} -1}
  f(Y_n) } } {\expecin{\nu_{\mathrm{E}}}{ \sum_{n=0}^{\cT_{\cB} -1}
  f(Y_n) }}}  \\
&\qquad \leq  \abs{
1-\frac{\probin{\pi_{0 | \mathcal A}}{Y_1 \in \cB}}{ \probin{\nu_{\mathrm{Q}}}{Y_1 \in \cB}}}
 + \frac{\probin{\pi_{0 | \mathcal A}}{Y_1 \in \cB}}{ \probin{\nu_{\mathrm{Q}}}{Y_1 \in \cB}} \abs{1 - \frac{\nu_{\mathrm{Q}}f}{ \pi_{0 | \mathcal A}f }} .
\end{align}
From~\eqref{EqDiffMeasure}, one has, for any test
function $f: \mathcal A \to \R$,
$$|\expecin{\pi_{0|\cA}}{f}-\expecin{\nu_{\mathrm{Q}}}{f}| \le \probin{\pi_{0 | \mathcal A}}{Y_1 \in \cB} T^{\mathrm{E}}_{\mathrm{Q}} \|f\|_\infty.$$
Applying this inequality with $f(x)=\probin{x}{Y_1 \in \cB}$ gives, since $\|f\|_\infty= p^+$, 
\begin{equation}
\abs{
1-\frac{\probin{\nu_{\mathrm{Q}}}{Y_1 \in \cB}}{ \probin{\pi_{0|\cA}}{Y_1 \in \cB}}}  \leq {p^+T^{\mathrm{E}}_{\mathrm{Q}}}  .
\label{EqProbinBoundTraceKilled}
\end{equation}
As $p^+T^{\mathrm{E}}_{\mathrm{Q}} < 1$ by assumption, one
deduces that
\begin{equation}
\abs{
1-\frac{\probin{\pi_{0 | \mathcal A}}{Y_1 \in \cB}}{ \probin{\nu_{\mathrm{Q}}}{Y_1 \in \cB}}}  \leq \frac{p^+T^{\mathrm{E}}_{\mathrm{Q}}}{1-p^+T^{\mathrm{E}}_{\mathrm{Q}}} .
\end{equation}
From this equation, we also have
\begin{equation}
\frac{\probin{\pi_{0 | \mathcal A}}{Y_1 \in \cB}}{ \probin{\nu_{\mathrm{Q}}}{Y_1 \in \cB}}  \leq \frac{1}{1-p^+T^{\mathrm{E}}_{\mathrm{Q}}} .
\end{equation}
Finally, using again~\eqref{EqDiffMeasure}, one obtains
\begin{equation}
\abs{1 - \frac{\nu_{\mathrm{Q}}f}{ \pi_{0 | \mathcal A}f }}  \leq 
T^{\mathrm{E}}_{\mathrm{Q}}\probin{\pi_{0 | \mathcal A}}{Y_1 \in \cB} \frac{\norm{f}_{\infty}}{| \pi_{0 | \mathcal A} f| }, 
\end{equation}
and the result follows by bounding $\probin{\pi_{0 | \mathcal A}}{Y_1 \in \cB}$ by $p^+$.

\subsection*{Proof of Proposition \ref{Prop:EqFinite}}
Let us consider the QSD $\nu_{\mathrm{Q}}$ introduced in Assumption \ref{Ass:Finitecv}.
Let $f: \cA \rightarrow \R$ be a test function, then for all $x\in \cA$, \eqref{EqHfReturnprime}, \eqref{EqHfFirstPart}, and \eqref{EqHfStrongMarkov} ensure that
\begin{equation}
\nu_{\mathrm{E}} H_{\mathrm{Q}}f = \sum_{n = 0}^{\infty} \bigexpecin{\nu_{\mathrm{E}}}{ \braces{ f(Y_n) - \nu_{\mathrm{Q}}f} \mathbf{1}_{\cT_{\cB}> n}}  .
\end{equation}
Note that, by conditioning,
\begin{align}
\bigexpecin{\nu_{\mathrm{E}}}{ \braces{ f(Y_n) - \nu_{\mathrm{Q}} f } \mathbf{1}_{\cT_{\cB}> n} } & = \econdin{\nu_{\mathrm{E}}}{f(Y_n) - \nu_{\mathrm{Q}}f}{ \cT_{\cB} >n } \probin{\nu_{\mathrm{E}}}{ \cT_{\cB} >n }\\
&= \left(\econdin{\nu_{\mathrm{E}}}{f(Y_n) }{ \cT_{\cB} >n } - \nu_{\mathrm{Q}}f \right) \probin{\nu_{\mathrm{E}}}{ \cT_{\cB} >n } .
\label{eqDiffExp}
\end{align}
For all $n  \geq 0$, let us introduce
\begin{equation}
I(n) = \sum_{m= 0 }^n \left( \econdin{\nu_{\mathrm{E}}}{f(Y_m) }{ \cT_{\cB} >m } - \nu_{\mathrm{Q}}f \right) ,
\end{equation}
with the convention $I(-1)=0$. One has
\begin{equation}
\econdin{\nu_{\mathrm{E}}}{f(Y_n) }{ \cT_{\cB} >n } - \nu_{\mathrm{Q}}f = I(n)-I(n-1) .
\end{equation}
Besides, notice that under \Cref{Ass:Finitecv},
\begin{equation}
\abs{ I(n) }  \leq \eta \norm{f}_{\infty} .
\label{Eq:InAss}
\end{equation}
As a consequence, a summation by parts yields
\begin{align}
\nu_{\mathrm{E}} H_{\mathrm{Q}}f &= \sum_{n  \geq 0} \bigpar{ I(n)- I(n-1)} \probin{\nu_{\mathrm{E}}}{\cT_{\cB} >n}\\
&= \sum_{n  \geq 0} I(n) \bigpar{ \probin{\nu_{\mathrm{E}}}{\cT_{\cB} >n} - \probin{\nu_{\mathrm{E}}}{\cT_{\cB} >n+1} }\\
&= \sum_{n  \geq 0} I(n) \probin{\nu_{\mathrm{E}}}{\cT_{\cB} = n+1}  .
\end{align}
Using \eqref{Eq:InAss}, the result follows by taking the supremum over all test functions $f$ such that $\norm{f}_\infty  \leq 1$ since,  under Assumption \ref{Ass:K},
 $ \sum_{n  \geq 0} \probin{\nu_{\mathrm{E}}}{\cT_{\cB} = n+1}= 1 $.

\subsection*{Proof of Lemma \ref{Cor:Geobound}}
Remember that $T_{\mathrm{Q}}$ defined by~\eqref{Eq:TQ} is an
upper-bound of $T^{\mathrm{E}}_{\mathrm{Q}}$.
Next, by the very definition of $T_{\mathrm{Q}}$ and Lemma \ref{Lemma:NuQConditioned}, one has
$$T_{\mathrm{Q}}= \sup_{x \in \cA} \norm{H_{\mathrm{Q}}(x,\cdot)}\leq\supSur{x \in \cA } \displaystyle \sum_{n=0}^\infty \norm{ \cL^x(Y^{\nu_{\mathrm{Q}}}_n) - \nu_{\mathrm{Q}}} \leq\supSur{x \in \cA } \displaystyle \sum_{n=0}^\infty \norm{ \cL^x(Y_n \vert \cT_{\cB}> n) - \nu_{\mathrm{Q}}}  \leq \frac{\alpha}{1-\rho} .
$$
Additionally, for any $0 <c <1$ and the deterministic time $T = \ceil[\Big]{\frac{\ln({c \alpha^{-1}})}{\ln(\rho)}}$, it holds
\begin{align}
\norm{ \cL^x(Y_T \vert \cT_{\cB}> T) - \nu_{\mathrm{Q}}}  \leq \alpha \rho^T  \leq c  .
\end{align}
Therefore, if $0< c <1 $, using \eqref{Def:TStop} and again the fact
that, by Lemma~\ref{Lemma:NuQConditioned}, 
$$\norm{ \cL^x(Y^{\nu_{\mathrm{Q}}}_n) - \nu_{\mathrm{Q}}} \leq\norm{ \cL^x(Y_n \vert \cT_{\cB}> n) - \nu_{\mathrm{Q}}},$$
one has $T_{\mathrm{stop}}(c)  \leq T$.
 By \eqref{Eq:RelTStop}, the relaxation time satisfies
 \begin{equation}
T_{\mathrm{Q}}  \leq \frac{2}{1-c} 
 \ceil[\Big]{\frac{\ln(c \alpha^{-1})}{\ln(\rho)} } .
 \end{equation}
When $\rho$ tends to $0$, this upper-bound converges to 2 by considering $c=\alpha\rho$.

\subsection*{Proof of Proposition \ref{prop:ellipticdiff}}
Recall that $S$ is such that $A \subset S$, $B \subset S^c$ and $\partial
S=\Upsigma$. Let us start with some notation and some classical
results to relate the process $(X_t)$ with the solutions to some partial
differential equations. Let $\varphi : \Upsigma \to \R$ and $\psi: \cA
\to \R$  be continuous functions. Define for all $x \in  \R^d$ 
\begin{equation}
u(x) = \expecin{x}{\varphi(X_{\tau_{\Upsigma}})} \text{ and } v(x) = \expecin{x}{\psi(X_{\tau_{\cA}}) \mathbf{1}_{\tau_{\cA} < \tau_{\cB}}}  ,
\end{equation}
where $\tau_{C} = \inf\setsuch{t>0}{ X_t \in C}$.
Hence, denoting by $L$ the generator of the diffusion process $(X_t)$,
$u$ and $v$ satisfy (see for example~\cite[Theorem 5.1]{friedman-75})
\begin{equation}\label{eq:u}
\begin{cases}
Lu(x)=0				& \text{for $x \in S $,}\\
u(x)=\varphi(x)				& \text{for $x \in \Upsigma$,}
\end{cases}
\end{equation}
and
\begin{equation}\label{eq:v}
\begin{cases}
Lv(x)=0				& \text{for $x \in (A \cup B)^c$,}\\
v(x)=\psi(x)				& \text{for $x \in \cA $,}\\
v(x)=0				& \text{for $x \in \cB $.}
\end{cases}
\end{equation}
In particular, $u$ and $v$ are $\mathcal C^\infty$ functions in the
interior of their domains of definition, from standard
elliptic regularity results (see for example~\cite[Corollary 8.11]{Gilbarg_Trudinger}). 
Then, denoting by $K_{S \Upsigma}(x,dy)$ the measure of the first
hitting point on $\Upsigma$ for the process $(X_t)$ starting from $x
\in S$, and $K_{(A\cup B)^c \cA}(x,dy)$ the measure of the first
hitting point on $\cA$ for the process $(X_t)$ starting from $x
\in (A \cup B)^c$ and reaching $\cA$ before $\cB$, one has 
\begin{equation}\label{aeicjccjacnjcn}
u(x) = \int_{\Upsigma} K_{S \Upsigma}(x,dy) \varphi(y) 
\text{ and } v(x) = \int_{ \cA} K_{ (A \cup B)^c \cA}(x,dy) \psi(y) .
\end{equation}

We first verify that Assumption~\ref{Ass:K1} is satisfied. Let us take a bounded
continuous function $f: \cE \to \R$. One would like to check that $Kf(x)
= \E^x[f(Y_1)]$ is a bounded continuous function of $x$. Let us
consider $x \in \cA$ (the reasoning is similar if $x \in \cB$). Then
$$Kf(x) = \E^x[f(Y_1) \mathbf{1}_{Y_1 \in \cA}] + \E^x[f(Y_1) \mathbf{1}_{Y_1 \in
  \cB}].$$
 Considering the first term (the reasoning is similar for
the second one), one has
$$\E^x[f(Y_1) \mathbf{1}_{Y_1 \in \cA}] = \E^x [ \varphi(X_{\tau_{\Upsigma}})]$$
where
$$\varphi(x)= \E^{x} [f(X_{\tau_A}) \mathbf{1}_{\tau_A < \tau_B}   ].$$
Thus $\varphi$ satisfies~\eqref{eq:v} for the boundary condition
$\psi=f \mathbf{1}_{\cA}$. In particular, $\varphi$ is continuous and bounded
on $\Upsigma$. Then, $\E^x [ \varphi(X_{\tau_{\Upsigma}})]$
satisfies~\eqref{eq:u}, and is thus again a continuous and bounded
function. This concludes the proof of Assumption~\ref{Ass:K1}.

Let us now prove Assumptions~\ref{Ass:K2} and~\ref{Ass:K3}. As shown in the proof of \cite[Theorem~1.7]{Lu_Nolen_2015},
the reactive entrance processes in $\cA$ and $\cB$ are positive Harris
recurrent under the ellipticity condition~\eqref{eq:elliptic}. Thus, by~\Cref{Rem:EntranceStation}, the Markov chain
$(Y_n)$ defined by~\eqref{Eq:MCDiffusion} is positive Harris
recurrent, with an invariant measure $\pi_0$ which satisfies~\eqref{eq:pi0nuAnuB}. This yields Assumption~\ref{Ass:K2}.
 Assumption~\ref{Ass:K3} is
satisfied because, via \eqref{kajcpcj} and \eqref{eq:pi0nuAnuB},
$$\pi_0(\cA)= \frac{\expecin{
    \nu_{\mathrm{E}}^{\cA}}{\cT_{\cB}}}{\expecin{
    \nu_{\mathrm{E}}^{\cA}}{\cT_{\cB}}
  +\expecin{\nu_{\mathrm{E}}^{\cB}}{\cT_{\cA}} }\hspace{1cm}\mathrm{and}\hspace{1cm} \pi_0(\cB)= \frac{\expecin{
    \nu_{\mathrm{E}}^{\cB}}{\cT_{\cA}}}{\expecin{
    \nu_{\mathrm{E}}^{\cA}}{\cT_{\cB}}
  +\expecin{\nu_{\mathrm{E}}^{\cB}}{\cT_{\cA}} } $$ 
  are non-zero since $\expecin{
    \nu_{\mathrm{E}}^{\cA}}{\cT_{\cB}}=T_{AB}$ and $\expecin{
    \nu_{\mathrm{E}}^{\cB}}{\cT_{\cA}}=T_{BA}$ are strictly positive and
  finite (see \cite[Proposition 1.8]{Lu_Nolen_2015}). 


It remains to establish that \Cref{Ass:GeoErgo} is fulfilled as well. Let $(Y^Q_n)$ be the process killed when leaving $\cA$. Its
(sub-Markov) transition kernel is given for $x \in \mathcal A$ by
$$K_{\mathcal A}(x,dy) = \int_{z \in \Upsigma} K_{S \Upsigma} (x, dz)
K_{(A\cup B)^c \mathcal A}
(z,dy).$$
Let us show that it satisfies the following two-sided condition: there
exists a non-zero positive finite measure $\pi_\cA$ and a constant
$C>0$ such that
\begin{equation}\label{eq:TS}
\pi_\cA(dy) \le K_{\mathcal A}(x,dy) \le C \pi_\cA(dy).
\end{equation}

Note that by the maximum principle, for any non-zero $\varphi \ge 0$,
$u$ (solution to~\eqref{eq:u}) is strictly positive on $S$. Moreover, by the Harnack inequality for elliptic operators \cite[Corollary~9.25]{Gilbarg_Trudinger} and the compactness of $\cA$, we then have 
\begin{equation}
0 \le \supSur{x \in \cA}{u(x)}  \leq C \infSur{x \in \cA}{u(x)},
\end{equation}
where the constant $C$ is independent of $\varphi \ge 0$ (considering \eqref{Eq:SDE}, it only
depends on the lower and upper bounds of $gg^T$ and on the maximum of $|f|$ on
some compact set $\cA' \subset S$ which contains a neighborhood of
$\cA$). Therefore, by \eqref{aeicjccjacnjcn}, for all smooth function $\varphi \ge 0$, one has
$$0 \le \supSur{x \in \cA}{\int_{\Upsigma} K_{S \Upsigma}(x,dy) \varphi(y) } \le C \infSur{x \in \cA}{\int_{\Upsigma} K_{S \Upsigma}(x,dy) \varphi(y)},$$
with equality to $0$ if and only if $\varphi=0$. Let $O$ be a non-empty open
subset of $\cA$, and let us introduce $\varphi_O(x) = K_{(A \cup B)^c
  \cA}(x,O)$. The function $\varphi_O$ is smooth in the interior of $(A \cup B)^c$ by standard
regularity results
on elliptic operators (since it satisfies~\eqref{eq:v} with
$\psi=\mathbf{1}_{O}$), and non-zero (by the maximum principle, since
$\mathbf{1}_{O}$ is nonnegative
and non-zero). One thus has:
\begin{equation}\label{eq:harnack}
0 < \supSur{x \in \cA}{\int_{z\in \Upsigma} K_{S \Upsigma}(x,dz)  K_{(A \cup B)^c
  \cA}(z,O) } \le C \infSur{x \in \cA}{\int_{z\in \Upsigma} K_{S \Upsigma}(x,dz)  K_{(A \cup B)^c
  \cA}(z,O)},
\end{equation}
where $C$ is independent of $O$. Let us now introduce
$$\pi_\cA(dy) = \inf_{x \in \cA} \int_{z \in \Upsigma}  K_{S \Upsigma}(x,dz)  K_{(A \cup B)^c
  \cA}(z,dy).$$
This is a nonnegative measure on $\cA$ as the infimum of positive measures
(see for example~\cite[Lemma 5.2]{Champagnat_Villemonais_16}) which is non-zero
since $\pi_{\cA}(O) > 0$ for any non-empty open set $O \subset \cA$ thanks
to~\eqref{eq:harnack}. Notice also that $\pi_{\cA}(\cA) \le
1$. Moreover, from~\eqref{eq:harnack}, one has, for all $x\in\cA$,
$$\pi_\cA(dy) \le \int_{z \in \Upsigma}  K_{S \Upsigma}(x,dz)  K_{(A \cup B)^c
  \cA}(z,dy) \le C \pi_\cA(dy)$$
which yields~\eqref{eq:TS}.

Assumption \ref{Ass:GeoErgo} is then a consequence of
the two-sided condition~\eqref{eq:TS}, and actually of the more general
two-sided condition stated in Equation~\eqref{Eq:UniformlyPos}.  See
for example
Proposition~\ref{prop:twosided}, which
yields~\eqref{EqAssumGeoErgodic} with $\rho=\frac{C-1}{C+1}$.

\section{Two-sided condition and convergence to the QSD}\label{Sec:birkoff}

The objective of this appendix is to rewrite in our specific probabilistic
setting the results
of~\cite{Birkhoff1957} to prove existence, uniqueness and
  convergence to the quasi-stationary distribution
for a sub-Markov kernel under the so-called two-sided condition
(see Equations~\eqref{Eq:UniformlyPos} and~\eqref{eq:TS} above and Equation~\eqref{eq:2S} below). We thus do not claim any
originality here, and this material is only provided for the sake of
completeness. This result is stated
in~\cite{champagnat-coulibaly-pasquier-villemonais-18}, but not proved
exactly in the discrete-time setting we consider here, see also~\cite[Section
7.1]{Champagnat_Villemonais_2017}. Notice that Birkhoff's seminal
paper~\cite{Birkhoff1957} is followed by a large body of literature, see in
particular~\cite{liverani-95,nussbaum-88} and references therein. See
also the
two references~\cite{atar1997exponential,le2004stability} for very
similar statements.

Let us emphasize again (see Remark~\ref{rem:unique_QSD}) that the
two-sided condition is one example of a sufficient condition to get
exponential convergence to the quasi-stationary distribution.  Equation~\eqref{Eq:UniformlyPos}
actually implies that the so-called Dobrushin
ergodic coefficient is smaller than one, which also yields the uniform
geometric ergodicity~\eqref{EqAssumGeoErgodic}, see~\cite{del2001stability}
and~\cite[Section 12.2]{del2013mean}. For a thorough review of
sufficient conditions to get~\eqref{EqAssumGeoErgodic}, we refer to the recent
review paper~\cite{moral2021stability}.

This section is
organized as follows. After introducing some notation in
Section~\ref{sec:setting}, the Hilbert's projective metric is defined
in Section~\ref{sec:Hilbert}. This is a projective metric on the set
of positive (non-zero) measures, which thus defines a metric on the
probability measures. In Section~\ref{sec:T}, we analyse how the distance
between two measures evolves under the application of a sub-Markov
kernel $K$. Section~\ref{sec:FP} finally gives the main result, namely the
existence and convergence to a QSD under a contraction assumption
(in the spirit of the Banach fixed point theorem), and
the fact that this contraction assumption is satisfied under the
two-sided condition~\eqref{eq:2S}.

\subsection{The setting}\label{sec:setting}

Let us consider the topological vector space $\mathcal M$ of Radon measures on a
Polish space $(X,d)$ such that
\begin{equation}\label{eq:finite_meas}
\forall \lambda \in \mathcal M, \, \lambda(X) < \infty,
\end{equation}
 equipped with the topology of the convergence in
distribution. We denote by $\mathcal X$ the ensemble of Borel sets on
$X$. For $\lambda$ and $\mu$ in $\mathcal M$, we
denote $\lambda \le \nu$ if for all Borel set $A \in \mathcal X$, $\lambda(A) \le
\nu(A)$. This defines a partial ordering such that if $\lim_{n \to \infty} \lambda_n
= \lambda$, and $\lambda_n \ge \nu$ for all $n$, then $\lambda \ge
\nu$. 
Let us now define the  convex cone
$$\mathcal M_+ = \{ \lambda \in \mathcal M \text{ s.t. } \lambda \ge 0
\text{ and } \lambda \neq
0\}.$$
For any $\lambda \in \mathcal M_+ $, one thus has $\lambda(X)>0$. Notice that $\mathcal M_+ \cup \{0\}$ is closed. Let us introduce
the equivalence relation:
$$\lambda \sim \nu \iff \exists c > 0, \lambda=c\nu.$$
The quotient of $\mathcal M_+$ under $\sim$ is a closed
convex set, which, thanks to~\eqref{eq:finite_meas}, can be identified with the ensemble of probability
measures on $(X,d)$, denoted by $\mathcal M_1$ in the following.

Let us now consider a non-zero sub-Markov kernel\footnote{One can
  check that all the results presented in this appendix still hold assuming that $K: X \times \mathcal X \to
[0,M]$ for some $M>0$. We stick to the case $M=1$ having in mind the
probabilistic framework of a sub-Markov kernel.} $K: X \times \mathcal X \to
[0,1]$:
\begin{itemize}
\item for a fixed $A \in \mathcal X$, $x\mapsto K(x,A)$ is measurable;
\item for a fixed $x \in X$, $A \mapsto K(x,A)$ is a measure with
  total mass smaller than $1$.
\end{itemize}
Let us denote by 
$$\mathcal T:
\left\{
\begin{aligned}
\mathcal M &\to \mathcal M\\
\lambda &\mapsto \int_{x \in X} \lambda(dx)  K(x,dy) 
\end{aligned}
\right.
$$
the associated transition kernel on measures. It is a linear map,
which is such that
$$\mathcal T(\mathcal M_+) \subset \mathcal M_+ \cup \{0\}.$$
In the notation of Section~3, $\mathcal T (\lambda) = \lambda K$.
Notice that $\mathcal T$ is a bounded operator if $\mathcal M$ is
endowed with the total variation norm, since for any two measures
$\lambda$ and $\nu$, and any Borel set $A \in \mathcal X$,
$$|\mathcal T (\lambda) (A) - \mathcal T (\nu) (A)| = \int_{x \in X}
K(x,A) (\lambda-\nu)(dx) \le \|K(x,A)\|_{\infty} \|\lambda -
\nu\|_{TV} \le \|\lambda -
\nu\|_{TV} .$$
In all the following, it is assumed that
\begin{equation}\label{eq:AS1}
\forall \lambda \in\mathcal M_+, \, \mathcal T (\lambda) \neq 0,
\end{equation}
so that
$$\mathcal T(\mathcal M_+) \subset \mathcal M_+.$$
\begin{lemma}\label{lem:AS1}
  The assumption~\eqref{eq:AS1} is equivalent to
\begin{equation}\label{eq:AS1prime}
\forall x \in  X, \, K(x, X)\neq 0,
\end{equation} 
\end{lemma}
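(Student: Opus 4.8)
The plan is to prove the two implications separately by unwinding the definition of $\mathcal T$ and exploiting the fact that, by construction, every $\lambda \in \mathcal M_+$ has strictly positive total mass $\lambda(X)>0$, whereas a nonnegative Radon measure with vanishing total mass is the zero measure. The one identity that drives everything is obtained by evaluating $\mathcal T(\lambda) = \lambda K$ on the full space: for any $\lambda \in \mathcal M$,
\[
\mathcal T(\lambda)(X) = \int_{x\in X} \lambda(dx)\, K(x,X).
\]
So $\mathcal T(\lambda)\neq 0$ is equivalent to $\int_X K(\cdot,X)\,d\lambda>0$, and the whole statement reduces to a question about integrating the function $x\mapsto K(x,X)$.

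First I would prove that \eqref{eq:AS1prime} implies \eqref{eq:AS1}. Fix $\lambda\in\mathcal M_+$; then $\lambda\ge 0$ and $\lambda(X)>0$ (using \eqref{eq:finite_meas}). Assuming $K(x,X)>0$ for every $x\in X$, the integrand above is strictly positive, so $\mathcal T(\lambda)(X)>0$ by the standard fact that the integral of a strictly positive measurable function against a nonzero nonnegative measure is strictly positive. Concretely, writing $X=\bigcup_{n\ge 1}\{x\in X:\,K(x,X)>1/n\}$ as an increasing union and using $\lambda(X)>0$, there is some $n$ with $\lambda(\{K(\cdot,X)>1/n\})>0$, whence $\mathcal T(\lambda)(X)\ge \tfrac1n\,\lambda(\{K(\cdot,X)>1/n\})>0$. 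In particular $\mathcal T(\lambda)\neq 0$, which is \eqref{eq:AS1}.

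For the converse I would argue by testing against Dirac masses. Given an arbitrary $x_0\in X$, the Dirac measure $\delta_{x_0}$ is a finite, nonnegative, nonzero Radon measure on the Polish space $X$, hence $\delta_{x_0}\in\mathcal M_+$. By \eqref{eq:AS1}, $\mathcal T(\delta_{x_0})\neq 0$; and since $\mathcal T(\delta_{x_0})(A)=\int_X \delta_{x_0}(dx)\,K(x,A)=K(x_0,A)$ for every Borel set $A$, taking $A=X$ gives $K(x_0,X)=\mathcal T(\delta_{x_0})(X)>0$. As $x_0$ was arbitrary, this is \eqref{eq:AS1prime}. The argument is entirely routine; the only place that deserves a line of justification — and which I would therefore single out as the ``hard'' point, though it is completely standard — is the measure-theoretic lemma used in the first implication, namely the strict positivity of $\int K(\cdot,X)\,d\lambda$ when $K(\cdot,X)>0$ everywhere and $\lambda\ne 0$; everything else is bookkeeping with the definitions of $\mathcal T$ and of $\mathcal M_+$.
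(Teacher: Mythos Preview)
Your proof is correct and essentially identical to the paper's: both directions use the same ideas (Dirac masses for \eqref{eq:AS1}$\Rightarrow$\eqref{eq:AS1prime}, and the identity $\mathcal T(\lambda)(X)=\int_X K(x,X)\,\lambda(dx)$ together with strict positivity of the integrand for the converse). The only cosmetic difference is that the paper phrases the positivity argument as a contradiction while you give a direct level-set bound.
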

\begin{proof}
  Since $\mathcal T(\lambda)=\lambda K \neq 0$ for any $\lambda \in
  \mathcal M_+$, it is obvious that~\eqref{eq:AS1} implies~\eqref{eq:AS1prime} by
simply considering $\lambda=\delta_x$, for any $x \in X$. Let
us now assume that~\eqref{eq:AS1prime} holds, and let us consider
$\lambda \in \mathcal M_+$. It is clear that $\mathcal T (\lambda) \ge
0$, so that it only remains to check that $\mathcal T
(\lambda)( X)>0$ to get~\eqref{eq:AS1}. Let us assume that $\mathcal T
(\lambda)( X)=0$ which rewrites as $\int_{y \in X} \int_{x \in X}
\lambda(dx) K(x,dy)=0$. By Fubini's theorem, one then gets $\int_{x \in X}
\lambda(dx) K(x,X)=0$ which implies that $\lambda (\{x, \, K(x,X)>
0\})=0$. But this is in contradiction with the facts that, on the one
hand $\{x, \, K(x,X)> 0\}=X$, and on the other hand $\lambda(X)>0$,
since $\lambda \in \mathcal M_+$.
\end{proof}
Notice that~\eqref{eq:AS1prime} is not a very strong assumption, in
the sense that if $K(x,X)=0$ for some $x$, one can modify $X$ to $X
\setminus \{x, \, K(x,X)=0\}$ to satisfy \eqref{eq:AS1prime}.
We will make explicit in Proposition~\ref{prop:twosided} below a
sufficient practical condition (the two-sided condition~\eqref{eq:2S}) to get~\eqref{eq:AS1prime}.

\subsection{The Hilbert's projective metric}\label{sec:Hilbert}
The Hilbert's projective metric $\Theta$ on $\mathcal M_+$ is defined
by
$$\forall \lambda, \nu \in \mathcal M_+, \, \Theta(\lambda,\nu) = \ln \left( \frac{C(\lambda,\nu)}{ c(\lambda,\nu)}\right),$$
where
$$c(\lambda,\nu)=\sup\{ c >0, \, c \lambda \le \nu\}\hspace{1cm}\mathrm{and}\hspace{1cm}C(\lambda,\nu)=\inf\{ C >0, \,\nu \le  C \lambda \}.$$
We use the standard conventions that $c(\lambda,\nu)=0$ if $\{ c
>0, \, c \lambda \le \nu\}=\emptyset$ and $C(\lambda,\nu)=\infty$ if $\{ C
>0, \,\nu \le  C \lambda \} = \emptyset$. One can check that for any
measures $\lambda, \nu, \rho \in \mathcal M_+$,
$\Theta(\lambda,\nu) =\Theta(\nu, \lambda)$,  $\Theta(\lambda,\nu) \le
\Theta(\lambda,\rho) +  \Theta(\rho,\nu)$ and $\Theta(\lambda,\nu) =0$
if and only if $\lambda=t\nu$ for some $t >0$.
Notice that if $\Theta(\lambda,\nu)<\infty$ then $\lambda$ and
$\nu$ are equivalent. Finally, remark that this is a projective metric
in the sense that for any $t >0$ and $u>0$, $\Theta(t\lambda,u\nu)=\Theta(\lambda,\nu)$: it is
thus a metric on $\mathcal M_1$, and only a pseudo-metric on $\mathcal M_+$.

Let
us make a link between the Hilbert's projective metric and other more
usual metrics. One can read the next two lemmas with the total
variation norm in mind, as an example.
\begin{lemma}\label{lem:norm}
Let $\|\cdot\|$ be a norm on $\mathcal M$ such that
\begin{equation}\label{eq:vector_lattice}
\forall \lambda,\nu \in \mathcal M, \quad -\lambda \le \nu \le \lambda \Rightarrow \|\nu\| \le \|\lambda\| .
\end{equation}
Then, for any $\lambda, \nu$ in $\mathcal M_+$ such that $\|\lambda\|=\|\nu\|$, one has
$$\|\lambda - \nu \| \le \left(\exp(\Theta(\lambda,\nu)) -1 \right) \|\lambda\|.$$
\end{lemma}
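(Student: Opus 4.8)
The plan is to exploit the sandwiching built into the Hilbert metric and then to invoke the lattice-type property~\eqref{eq:vector_lattice} of $\|\cdot\|$. First I would dispose of the trivial case: if $\Theta(\lambda,\nu)=\infty$ the right-hand side of the claimed inequality is $+\infty$ and (since $\lambda-\nu\in\mathcal M$) there is nothing to prove; so I may assume $\Theta(\lambda,\nu)<\infty$, which means $0<c\le C<\infty$ for $c:=c(\lambda,\nu)$ and $C:=C(\lambda,\nu)$. Here I would record that the supremum and infimum defining $c$ and $C$ are attained: the set $\{t\ge0:\nu-t\lambda\in\mathcal M_+\cup\{0\}\}$ is the preimage of the closed cone $\mathcal M_+\cup\{0\}$ under the continuous map $t\mapsto\nu-t\lambda$, hence closed, so its supremum $c$ belongs to it, i.e. $c\lambda\le\nu$; symmetrically $\nu\le C\lambda$. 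Thus
\[
c\,\lambda \;\le\; \nu \;\le\; C\,\lambda .
\]

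Next I would prove $c\le 1\le C$. Since $\lambda\ge 0$ we have $0\le c\lambda\le\nu$, hence $-\nu\le c\lambda\le\nu$, and~\eqref{eq:vector_lattice} gives $c\,\|\lambda\|=\|c\lambda\|\le\|\nu\|=\|\lambda\|$, so $c\le 1$; likewise $0\le\nu\le C\lambda$ gives $-C\lambda\le\nu\le C\lambda$, so $\|\lambda\|=\|\nu\|\le C\,\|\lambda\|$ and $C\ge 1$. From $c\lambda\le\nu\le C\lambda$ I then get
\[
-(1-c)\,\lambda \;\le\; \nu-\lambda \;\le\; (C-1)\,\lambda ,
\]
and, because $1-c\ge 0$ and $C-1\ge 0$, the measure $\mu:=\max(1-c,\,C-1)\,\lambda$ is nonnegative and satisfies $-\mu\le\nu-\lambda\le\mu$. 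Applying~\eqref{eq:vector_lattice} once more,
\[
\|\lambda-\nu\| \;=\; \|\nu-\lambda\| \;\le\; \|\mu\| \;=\; \max(1-c,\,C-1)\,\|\lambda\| .
\]

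It then remains to verify the elementary numerical inequality $\max(1-c,\,C-1)\le \frac{C}{c}-1=e^{\Theta(\lambda,\nu)}-1$. Since $c>0$, the bound $C-1\le C/c-1$ is equivalent to $c\le 1$, and $1-c\le C/c-1$ is equivalent to $2c-c^2\le C$, which holds because $2c-c^2=1-(1-c)^2\le 1\le C$. Combining this with the previous display yields the claim. I expect the only mildly delicate point to be the attainment of $c(\lambda,\nu)$ and $C(\lambda,\nu)$, needed so that one genuinely has $c\lambda\le\nu\le C\lambda$; this is handled by the closedness of $\mathcal M_+\cup\{0\}$ recalled in Section~\ref{sec:setting}, but if one prefers to avoid it one can instead run the whole argument with scalars $c'<c$ and $C'>C$ (for which $c'\lambda\le\nu\le C'\lambda$ holds by definition of sup and inf) and let $c'\to c$, $C'\to C$ at the very end.
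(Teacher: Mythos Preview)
Your argument is correct and follows essentially the same route as the paper's proof: establish $c\le 1\le C$ from the lattice property~\eqref{eq:vector_lattice}, sandwich $\nu-\lambda$ between multiples of $\lambda$, and bound the resulting constant by $C/c-1$. The only cosmetic difference is that the paper uses the symmetric bound $C-c$ (since $C-1\le C-c$ and $1-c\le C-c$) rather than your $\max(1-c,\,C-1)$, and then notes $(C-c)\le(C-c)/c=C/c-1$; your bound is in fact slightly sharper at the intermediate step but both arrive at the same conclusion.
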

\begin{proof}
We follow~\cite[Lemma 1.3]{liverani-95}. Let $\lambda, \nu \in \mathcal M_+$ such that $\|\lambda\|=\|\nu\|$. Notice that necessarily, under \eqref{eq:vector_lattice}, $c(\lambda,\nu) \le 1$ and $C(\lambda,\nu) \ge 1$. Indeed
$0 \le \nu- c(\lambda,\nu) \lambda$ and thus $-\nu \le 0 \le
c(\lambda,\nu) \lambda
\le \nu$ which yields $c(\lambda,\nu) \|\lambda\| \le \|\nu\|$. The
proof is similar to get $C(\lambda,\nu) \ge 1$.
Therefore, we have
$$\nu-\lambda \le (C(\lambda,\nu)-1) \lambda \le (C(\lambda,\nu)-
c(\lambda,\nu))  \lambda,$$
and
$$\nu-\lambda \ge (c(\lambda,\nu)-1) \lambda \ge -(C(\lambda,\nu)-
c(\lambda,\nu))  \lambda.$$
This implies
$$\|\nu-\lambda\|\le (C(\lambda,\nu)-
c(\lambda,\nu)) \|\lambda\| \le \frac{(C(\lambda,\nu)-
c(\lambda,\nu))}{
c(\lambda,\nu)}\|\lambda\|  = \left(\exp(\Theta(\lambda,\nu)) -1 \right) \|\lambda\|.$$
\end{proof}

\begin{lemma}\label{lem:complete}
Let $\|\cdot\|$ be a norm on $\mathcal M$ such
that~\eqref{eq:vector_lattice} holds, and such that $\mathcal M$ is
complete for this norm. Then the set
$$\mathcal M^{\|\cdot \|}_1=\{\lambda \in \mathcal M_+, \, \|\lambda\|=1\}$$
is complete for the Hilbert's projective metric $\Theta$.
\end{lemma}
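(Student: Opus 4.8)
The plan is the standard completeness argument transported to the projective setting: show that a $\Theta$-Cauchy sequence is $\|\cdot\|$-Cauchy, take its norm limit, and check that the limit still lies in $\mathcal M^{\|\cdot \|}_1$ and is the $\Theta$-limit. So let $(\lambda_n)_{n\ge 0}$ be a sequence in $\mathcal M^{\|\cdot \|}_1$ with $\Theta(\lambda_n,\lambda_m)\to 0$ as $n,m\to\infty$. Since all the $\lambda_n$ have the same norm, Lemma~\ref{lem:norm} applies and gives $\|\lambda_n-\lambda_m\|\le \exp(\Theta(\lambda_n,\lambda_m))-1$, so $(\lambda_n)$ is Cauchy for $\|\cdot\|$; by completeness of $(\mathcal M,\|\cdot\|)$ there is $\lambda\in\mathcal M$ with $\|\lambda_n-\lambda\|\to 0$, and continuity of the norm gives $\|\lambda\|=1$, in particular $\lambda\neq 0$.

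Next I would check that $\lambda\in\mathcal M_+$, i.e.\ $\lambda\ge 0$. I would deduce this from the fact that the positive cone $\mathcal M_+\cup\{0\}$ is closed for the norm $\|\cdot\|$ — and hence so is every translate $\{\mu:\mu\ge\rho\}$ and every order interval $\{\mu: a\le\mu\le b\}$. This is the point where hypothesis~\eqref{eq:vector_lattice} is used: $\mathcal M_+\cup\{0\}$ is closed for the convergence in distribution, and for the norms of interest here (in particular the total variation norm, with which one should read the statement) $\|\cdot\|$-convergence implies convergence in distribution, so that a norm limit of nonnegative measures is nonnegative. Since $\lambda_n\ge 0$ for all $n$, we conclude $\lambda\ge 0$, hence $\lambda\in\mathcal M_+$ and $\lambda\in\mathcal M^{\|\cdot \|}_1$.

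It remains to show $\Theta(\lambda_n,\lambda)\to 0$. Fix $\varepsilon>0$ and $N$ with $\Theta(\lambda_n,\lambda_m)<\varepsilon$ for $n,m\ge N$. For measures of equal norm one has, exactly as in the proof of Lemma~\ref{lem:norm}, $c(\lambda_m,\lambda_n)\le 1\le C(\lambda_m,\lambda_n)$, so $\ln\!\big(C(\lambda_m,\lambda_n)/c(\lambda_m,\lambda_n)\big)<\varepsilon$ forces $c(\lambda_m,\lambda_n)\ge e^{-\varepsilon}$ and $C(\lambda_m,\lambda_n)\le e^{\varepsilon}$; since the supremum defining $c$ and the infimum defining $C$ are attained, this means $e^{-\varepsilon}\lambda_m\le\lambda_n\le e^{\varepsilon}\lambda_m$ for all $n\ge N$. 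Fixing $m\ge N$ and letting $n\to\infty$, the norm-closedness of the order interval $[e^{-\varepsilon}\lambda_m,\,e^{\varepsilon}\lambda_m]$ (from the previous paragraph) gives $e^{-\varepsilon}\lambda_m\le\lambda\le e^{\varepsilon}\lambda_m$, whence $c(\lambda_m,\lambda)\ge e^{-\varepsilon}$, $C(\lambda_m,\lambda)\le e^{\varepsilon}$, and therefore $\Theta(\lambda_m,\lambda)\le 2\varepsilon$. As $\varepsilon>0$ was arbitrary, $\Theta(\lambda_m,\lambda)\to 0$, which proves completeness.

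The steps with Lemma~\ref{lem:norm} and with $c$, $C$, $\Theta$ are routine bookkeeping; the only genuinely delicate point, which I expect to be the main obstacle, is the one in the second paragraph: justifying that one may pass to the limit in the order relations $\mu_n\ge\rho$ along $\|\cdot\|$-convergent sequences, i.e.\ that the positive cone and the order intervals of $\mathcal M$ are closed for the norm $\|\cdot\|$. That is exactly the role of assumption~\eqref{eq:vector_lattice}, and it is the part one should write out with care.
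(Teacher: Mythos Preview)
Your argument is correct and, in fact, more streamlined than the paper's. The paper proceeds by extracting a fast subsequence $\nu_i=\lambda_{n(i)}$ with $\Theta(\nu_i,\nu_{i+1})\le 2^{-(i+1)}$, turns this into the order bound $|\nu_{i+1}-\nu_i|\le 2^{-i}\nu_i$, telescopes these bounds via a separate product estimate (their Lemma~\ref{lem:prod}) to control $|\nu_j-\nu_i|$ uniformly in $j$, and only then passes to the norm limit to obtain an order bound between $\nu_i$ and $\nu_\infty$. You bypass both the subsequence extraction and the product lemma: Lemma~\ref{lem:norm} already gives norm-Cauchyness of the full sequence, and your direct use of $c(\lambda_m,\lambda_n)\ge e^{-\varepsilon}$, $C(\lambda_m,\lambda_n)\le e^{\varepsilon}$ (from $c\le 1\le C$ and $C/c\le e^{\varepsilon}$) followed by a single passage to the limit is cleaner. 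The paper's route buys nothing extra here; your route is shorter and yields $\Theta(\lambda_m,\lambda)\le 2\varepsilon$ in one step.

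You are also right that the genuinely delicate point, shared by \emph{both} proofs, is the passage to the norm limit inside an order inequality: the paper uses it when it writes ``By letting $j\to\infty$, $|\nu_\infty-\nu_i|\le \exp(1)2^{1-i}\nu_i$'' and when it asserts $\nu_\infty\in\mathcal M^{\|\cdot\|}_1$, exactly where you flag it. One small correction of attribution: this closedness of the positive cone (hence of order intervals) under norm convergence is not a formal consequence of~\eqref{eq:vector_lattice} alone; it comes from the ambient setup of Section~\ref{sec:setting}, where $\mathcal M_+\cup\{0\}$ is closed for convergence in distribution, together with the fact that the norms in play (total variation in particular) dominate that topology. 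You already say this in your second paragraph, so just drop the sentence ``That is exactly the role of assumption~\eqref{eq:vector_lattice}'' and keep the honest justification you gave.
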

\begin{proof}
We rely here on~\cite[Theorem 5]{birkhoff-62} or~\cite[Lemma
4]{Birkhoff1957}. Let $(\lambda_n)_{n \ge 0}$ be a Cauchy sequence for the metric $\Theta$,
with values in $\mathcal M^{\|\cdot \|}_1$. One can extract a subsequence
$\nu_i=\lambda_{n(i)}$ such that for all $i \ge 0$, $\Theta(\nu_i,\nu_{i+1}) \le
2^{-(i+1)}$. Therefore, for all $i \ge 0$,
$$c(\nu_i,\nu_{i+1}) \nu_i \le \nu_{i+1} \le C(\nu_i,\nu_{i+1})
\nu_i $$
with $\ln(C(\nu_i,\nu_{i+1}) / c(\nu_i,\nu_{i+1})) \le 2^{-(i+1)}$. 
By the same argument as in the proof of Lemma~\ref{lem:norm}, one has
$$|\nu_{i+1}-\nu_i| \le \left(\exp(\Theta(\nu_{i},\nu_{i+1})) -1\right) \nu_i.$$
Using the fact that 
$$\exp(2^{-(i+1)})
-1=\int_0^{2^{-(i+1)}} \exp(x) \, dx \le 2^{-(i+1)} \exp(1/2) \le 2^{-i},$$
this yields
\begin{align}
|\nu_{i+1}-\nu_i| &\le \left(\exp(2^{-(i+1)}) -1\right) \nu_i \le 
                    2^{-i} \nu_i. \label{eq:nui}
\end{align}
From this inequality and since $\|\nu_i\|=1$, one gets 
$$\|\nu_{i+1}-\nu_i\| \le 2^{-i},$$
which implies that $\nu_i$ converges to some $\nu_\infty \in \mathcal M^{\|\cdot \|}_1$ when $i \to
\infty$ in the $\|\cdot\|$-norm because $\mathcal M$ is
assumed complete for the $\|\cdot\|$-norm. 
From~\eqref{eq:nui}, one gets
$$ (1-2^{-i}) \nu_i \le  \nu_{i+1} \le (1+2^{-i}) \nu_i, $$
and thus, for $1 \le i < j$,
$$\prod_{k=i}^{j-1}  (1-2^{-k} ) \nu_i \le \nu_j \le \prod_{k=i}^{j-1}
(1+2^{-k} ) \nu_i.$$
Using Lemma~\ref{lem:prod} below, this yields, for $1 \le i < j$,
$$  (1-2^{2-i} ) \nu_i \le \nu_j \le (1+ \exp(1) 2^{1-i}) \nu_i$$
and thus
$$|\nu_j - \nu_i| \le \exp(1) 2^{1-i} \nu_i.$$
By letting
$j \to \infty$,
$$|\nu_\infty - \nu_i|\le \exp(1) 2^{1-i} \nu_i.$$
This implies that $\lim_{i \to \infty} \Theta(\nu_i,\nu_\infty) =
0$. Using the triangular inequality, one thus obtains (remember that $\nu_i=\lambda_{n(i)}$)
$$\Theta(\lambda_n,\nu_\infty) \le \Theta(\lambda_n,\lambda_{n(i)}) +
\Theta(\nu_i,\nu_\infty),$$
which goes to zero when $n$ goes to infinity. Indeed, for $\epsilon >0$, one
first chooses $n_0$ such that for all $m \ge n \ge n_0$, $\Theta(\lambda_n,\lambda_{m}) \le \epsilon
/2$ and then, for any $n \ge n_0$, one takes $i$ sufficiently large so that
$n(i)\ge n$ and $\Theta(\nu_i,\nu_\infty)\le\epsilon/2$. This yields
that for all $n \ge n_0$, $\Theta(\lambda_n,\nu_\infty) \le \epsilon$.
\end{proof}
Lemmas~\ref{lem:norm} and~\ref{lem:complete} apply for example to the total
variation norm $\|\cdot\|_{TV}$, since the vector space $\mathcal M$ of Radon measures is complete for the
total variation norm, and since \eqref{eq:vector_lattice} is satisfied. In this case $\mathcal M^{\|\cdot\|_{TV}}_1=\mathcal M_1$ is simply the set of
probability measures. Let us conclude this section with a simple
technical lemma which has been used in the previous proof.
\begin{lemma}\label{lem:prod}
Let $\alpha \in (0,1/2]$. Then, for any $1 \le i \le  j$,
$$1-2 \alpha^{i-1} \le \prod_{k=i}^j (1-\alpha^{k}) \le \prod_{k=i}^j
(1+\alpha^{k}) \le  1 + \exp(1) \alpha^{i-1}.$$
\end{lemma}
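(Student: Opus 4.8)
The plan is to treat the three inequalities of Lemma~\ref{lem:prod} independently. The middle one, $\prod_{k=i}^j(1-\alpha^k) \le \prod_{k=i}^j(1+\alpha^k)$, is immediate: for $\alpha \in (0,1/2]$ and every $k \ge 1$ one has $0 < 1-\alpha^k \le 1+\alpha^k$, so the inequality holds factor by factor, all factors being positive.

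For the upper bound I would first apply the elementary estimate $1+t \le \exp(t)$, valid for every $t \ge 0$, to the factors $1+\alpha^k$, obtaining $\prod_{k=i}^j(1+\alpha^k) \le \exp\big(\sum_{k=i}^j \alpha^k\big)$. Then I would bound the geometric sum by $\sum_{k=i}^j\alpha^k \le \sum_{k=i}^\infty \alpha^k = \alpha^i/(1-\alpha) \le 2\alpha^i$, where $1/(1-\alpha) \le 2$ because $\alpha \le 1/2$. The key observation is that, again because $\alpha \le 1/2$, one has $2\alpha^i = (2\alpha)\,\alpha^{i-1} \le \alpha^{i-1}$, while $\alpha^{i-1} \in (0,1]$ since $i \ge 1$; I would then conclude with the chord bound $\exp(x) \le 1 + \exp(1)\,x$ on $[0,1]$ (a consequence of the convexity of $\exp$, which on $[0,1]$ lies below the chord joining $(0,1)$ and $(1,\exp(1))$), applied at $x = \alpha^{i-1}$, to get $\prod_{k=i}^j(1+\alpha^k) \le \exp(\alpha^{i-1}) \le 1 + \exp(1)\,\alpha^{i-1}$.

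For the lower bound I would invoke the Weierstrass product inequality $\prod_{k=i}^j(1-a_k) \ge 1 - \sum_{k=i}^j a_k$, valid whenever $a_k \in [0,1]$ (a one-line induction on $j$, which here is even simpler since $\sum_{k=i}^j\alpha^k \le 2\alpha^i \le 1$, so the right-hand side stays nonnegative), applied with $a_k = \alpha^k$. Combining it with the same geometric-sum estimate as above gives $\prod_{k=i}^j(1-\alpha^k) \ge 1 - \sum_{k=i}^j\alpha^k \ge 1 - 2\alpha^i \ge 1 - 2\alpha^{i-1}$, the last step using $\alpha^i \le \alpha^{i-1}$. I do not expect a genuine obstacle here; the only points requiring care are the repeated use of $\alpha \le 1/2$ to absorb a factor $2$ and to pass from $\alpha^i$ to $\alpha^{i-1}$, and the observation that $\alpha^{i-1} \le 1$ (which needs $i \ge 1$) so that the chord bound for $\exp$ applies.
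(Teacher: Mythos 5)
Your proof is correct. For the middle and upper inequalities you follow essentially the same route as the paper: $1+t\le \exp(t)$, the geometric-sum bound $\sum_{k\ge i}\alpha^k=\alpha^i/(1-\alpha)\le\alpha^{i-1}$ (using $\alpha\le 1/2$), and the chord bound $\exp(x)\le 1+\exp(1)\,x$ on $[0,1]$ applied at $x=\alpha^{i-1}\le 1$. The only genuine difference is in the lower bound: the paper works with logarithms, writing $-\ln(1-\alpha^k)=\ln\bigl(1/(1-\alpha^k)\bigr)\le\ln(1+2\alpha^k)\le 2\alpha^k$ (valid since $\alpha^k<1/2$ for $k\ge 1$, except at $\alpha=1/2$, $k=1$, where equality $1/(1-x)=1+2x$ still holds) and then $\exp(-y)\ge 1-y$, whereas you invoke the Weierstrass product inequality $\prod(1-a_k)\ge 1-\sum a_k$ directly. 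Both are one-line elementary arguments; yours avoids logarithms entirely and even yields the slightly stronger intermediate bound $1-2\alpha^{i}$ before relaxing to $1-2\alpha^{i-1}$, while the paper's version keeps the two halves of the proof symmetric in style. Your care in noting where $\alpha\le 1/2$ and $i\ge 1$ are used matches the points the paper relies on implicitly.
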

\begin{proof}
Let $1 \le i \le j$,
$$
\ln\left( \prod_{k=i}^j (1+\alpha^{k}) \right)
=\sum_{k=i}^j \ln (1+\alpha^{k})\le \sum_{k=i}^j \alpha^{k}\le
  \frac{\alpha^{i}}{1-\alpha} \le \alpha^{i-1}
$$
and thus
$$
\prod_{k=i}^j (1+\alpha^{k})
\le\exp\left( \alpha^{i-1}\right) \le 1 + \exp(1) \alpha^{i-1}.
$$
Likewise, using the fact that for $x \in (0,1/2)$, $\frac{1}{1-x} \le 1+2x$,
$$
\ln\left( \prod_{k=i}^j (1-\alpha^{k}) \right)
=- \sum_{k=i}^j \ln \left(\frac{1}{1-\alpha^{k}}\right)\ge -
  \sum_{k=i}^j \ln \left(1+2\alpha^{k}\right) \ge - 2 \sum_{k=i}^j
  \alpha^{k} \ge -2 \alpha^{i-1}
$$
and thus
$$\prod_{k=i}^j (1-\alpha^{k})
\ge\exp\left( - 2 \alpha^{i-1}\right) \ge 1 - 2 \alpha^{i-1}.
$$

\end{proof}

\subsection{The projective metric norm $\Delta$ of $\mathcal T$}\label{sec:T}
\begin{proposition}
Let us define
$$\Delta = \sup_{\lambda,\nu \in \mathcal M_+} \Theta(\mathcal T (\lambda), \mathcal T
(\nu)).$$
Then, for all $\lambda$ and $\nu$ in $\mathcal M_+$,
\begin{equation}\label{eq:contrac}
\Theta(\mathcal T (\lambda), \mathcal T (\nu)) \le \tanh\left( \frac
  \Delta 4\right) \Theta(\lambda,\nu),
\end{equation}
with the convention $\tanh (+\infty)=1$.
\end{proposition}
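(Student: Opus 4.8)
The plan is to follow Birkhoff's classical contraction argument, splitting off the degenerate cases first and then carrying out the key one‑dimensional optimization that produces the sharp constant $\tanh(\Delta/4)$.

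\textbf{Reductions.} First I would dispose of the cases in which $\Theta(\lambda,\nu)$ or $\Delta$ equals $0$ or $\infty$. If $\Delta=0$ then $\mathcal T$ sends all of $\mathcal M_+$ into a single $\sim$‑class, so $\Theta(\mathcal T\lambda,\mathcal T\nu)=0$; if $\Theta(\lambda,\nu)=0$ then $\lambda\sim\nu$, hence $\mathcal T\lambda\sim\mathcal T\nu$ and again the left side is $0$. For $\Delta=\infty$ (and also for $\Delta<\infty$ with $\Theta(\lambda,\nu)=\infty$) I would first record the elementary non‑expansiveness of $\mathcal T$ for $\Theta$: if $c\lambda\le\nu\le C\lambda$ then linearity and monotonicity give $c\,\mathcal T\lambda\le\mathcal T\nu\le C\,\mathcal T\lambda$, so $\Theta(\mathcal T\lambda,\mathcal T\nu)\le\Theta(\lambda,\nu)$, which gives \eqref{eq:contrac} in those cases (using $\tanh(\infty)=1$, resp.\ the finite bound $\Theta(\mathcal T\lambda,\mathcal T\nu)\le\Delta$). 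It then remains to treat $0<\Delta<\infty$ and $0<\Theta(\lambda,\nu)<\infty$.

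\textbf{The auxiliary measures and the algebraic bound.} Set $c=c(\lambda,\nu)$ and $C=C(\lambda,\nu)$; finiteness of $\Theta(\lambda,\nu)$ forces $0<c<C<\infty$, and a closedness argument (pass to the limit on Borel sets) gives $c\lambda\le\nu\le C\lambda$. The nonnegative measures $\mu_1:=\nu-c\lambda$ and $\mu_2:=C\lambda-\nu$ are both nonzero (otherwise $\Theta(\lambda,\nu)=0$), hence lie in $\mathcal M_+$; by assumption \eqref{eq:AS1}, $\mathcal T\mu_1$ and $\mathcal T\mu_2$ lie in $\mathcal M_+$ as well, so by the very definition of $\Delta$ there are constants $0<a\le b$ with $\ln(b/a)\le\Delta$ and $a\,\mathcal T\mu_1\le\mathcal T\mu_2\le b\,\mathcal T\mu_1$. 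Substituting $\mathcal T\mu_1=\mathcal T\nu-c\,\mathcal T\lambda$, $\mathcal T\mu_2=C\,\mathcal T\lambda-\mathcal T\nu$ and solving the two resulting inequalities for $\mathcal T\nu$ yields $\tfrac{C+bc}{1+b}\,\mathcal T\lambda\le\mathcal T\nu\le\tfrac{C+ac}{1+a}\,\mathcal T\lambda$, and therefore $\Theta(\mathcal T\lambda,\mathcal T\nu)\le\ln\frac{(C+ac)(1+b)}{(1+a)(C+bc)}=\ln\frac{(t+a)(1+b)}{(1+a)(t+b)}$, where $t:=C/c=e^{\Theta(\lambda,\nu)}>1$.

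\textbf{The optimization.} It now suffices to bound $\frac{(t+a)(1+b)}{(1+a)(t+b)}$ uniformly over $0<a\le b\le Na$ with $N:=e^{\Delta}$. Since this expression is increasing in $b$ (for $t>1$), the extremum lies on $b=Na$; a one‑variable computation shows the maximum in $a$ is attained at $a=\sqrt{t/N}$, $b=\sqrt{Nt}$, with value $\bigl(\tfrac{\sqrt{Nt}+1}{\sqrt N+\sqrt t}\bigr)^2$. Writing $N=e^{\Delta}$, $t=e^{\theta}$ with $\theta=\Theta(\lambda,\nu)$ and clearing $e^{(\Delta+\theta)/4}$ from numerator and denominator turns this into $\bigl(\cosh\tfrac{\Delta+\theta}{4}\big/\cosh\tfrac{\Delta-\theta}{4}\bigr)^2$, so $\Theta(\mathcal T\lambda,\mathcal T\nu)\le F(\theta):=2\ln\cosh\tfrac{\Delta+\theta}{4}-2\ln\cosh\tfrac{\Delta-\theta}{4}$. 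I would finish by checking $F(0)=0$, $F'(\theta)=\tfrac12\bigl(\tanh\tfrac{\Delta+\theta}{4}+\tanh\tfrac{\Delta-\theta}{4}\bigr)$ so that $F'(0)=\tanh\tfrac{\Delta}{4}$, and $F''(\theta)=\tfrac18\bigl(\cosh^{-2}\tfrac{\Delta+\theta}{4}-\cosh^{-2}\tfrac{\Delta-\theta}{4}\bigr)\le0$ for $\theta\ge0$ because $x\mapsto\cosh^{-2}x$ is even and decreasing on $[0,\infty)$ while $\Delta+\theta\ge|\Delta-\theta|$; concavity together with $F(0)=0$ gives $F(\theta)\le F'(0)\,\theta=\tanh(\Delta/4)\,\Theta(\lambda,\nu)$, i.e.\ \eqref{eq:contrac}.

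The only genuinely delicate step is the last one: extracting the precise constant $\tanh(\Delta/4)$ requires pinning down the extremal pair $(a,b)$ in the optimization and then the hyperbolic rewriting that exposes the concave profile $F$. The remainder is bookkeeping with the order relation on $\mathcal M$ and the definitions of $c(\cdot,\cdot)$, $C(\cdot,\cdot)$, $\Theta$ and $\Delta$.
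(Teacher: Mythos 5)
Your proof is correct and follows the same Birkhoff--Liverani strategy as the paper: the same reductions of the degenerate cases, the same auxiliary measures $\nu-c\lambda$ and $C\lambda-\nu$ fed into the definition of $\Delta$, and the same algebraic rearrangement giving $\frac{C+bc}{1+b}\,\mathcal T\lambda\le\mathcal T\nu\le\frac{C+ac}{1+a}\,\mathcal T\lambda$ (the paper's $(m,M)$ are your $(a,b)$). The only divergence is the final analytic step: the paper writes $\ln\frac{(t+a)(1+b)}{(1+a)(t+b)}$ as $\int_0^{\Theta(\lambda,\nu)}\varphi(\mathrm e^x)\,\mathrm dx$ with $\varphi(y)=\frac{y}{a+y}-\frac{y}{b+y}$ and bounds the integrand by $\max_{\R_+}\varphi=\frac{1-\sqrt{a/b}}{1+\sqrt{a/b}}\le\tanh(\Delta/4)$, whereas you first maximize over the admissible pairs $(a,b)$ with $b/a\le\mathrm e^{\Delta}$ to get the hyperbolic profile $F(\theta)$ and then use concavity of $F$ to compare it with its tangent at $0$. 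Both computations are valid; the paper's integral trick is slightly shorter, while your route passes through the sharp intermediate bound $F(\theta)$ before relaxing to the linear one, at the cost of a two-variable optimization whose interior critical point you correctly identify at $a=\sqrt{t/N}$, $b=\sqrt{Nt}$.
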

\begin{proof}
We here follow~\cite[Theorem 1.1]{liverani-95}. For a geometric interpretation of
the computations, we refer to~\cite{Birkhoff1957}. Let  $\lambda$ and $\nu$ in
$\mathcal M_+$ be two positive (non-zero) measures. If
$c(\lambda,\nu)=0$ or $C(\lambda,\nu)=\infty$, then~\eqref{eq:contrac}
is satisfied. Likewise, if $c(\lambda,\nu)=C(\lambda,\nu)$, which is
equivalent to say that $\nu$ is proportional to $\lambda$,
then~\eqref{eq:contrac} is satisfied.
Otherwise, denoting for simplicity $c=c(\lambda,\nu)$
and $C=C(\lambda,\nu)$, one has, using the continuity property of the
partial ordering mentioned in Section~\ref{sec:setting},
$$c \lambda \le \nu \le C \lambda, \text{ with } C > c > 0 $$
and
$$\Theta(\lambda,\nu) =\ln \left( \frac{C}{c}\right) \in (0, \infty).$$
If $\Delta = +\infty$, then~\eqref{eq:contrac} holds since $\mathcal T(\nu - c\lambda
) \ge 0$ and $\mathcal T(C \lambda - \nu) \ge 0$ implies
$$c \mathcal T(\lambda) \le \mathcal T(\nu) \le C \mathcal T(\lambda),$$
so that $c(\mathcal T(\lambda),\mathcal T(\nu)) \ge c$ and $C(\mathcal T(\lambda),\mathcal T(\nu)) \le C$
which yields
$$\Theta(\mathcal T(\lambda),\mathcal T(\nu))\le \ln  \left(
  \frac{C}{c}\right)=\Theta(\lambda,\nu).$$
If $\Delta < \infty$, then one has, by assumption,
$$\Theta(\mathcal T(\nu - c\lambda
), \mathcal T(C \lambda - \nu)) \le \Delta,$$
which implies that there exist two positive real numbers $m$ and $M$ such that
\begin{equation}\label{eq:ineq}
m \mathcal T(\nu - c\lambda
) \le \mathcal T(C \lambda - \nu)) \le M \mathcal T(\nu - c\lambda
),
\end{equation}
and
\begin{equation}\label{eq:mM}
\ln  \left( \frac{M}{m}\right) \le \Delta.
\end{equation}
Notice that~\eqref{eq:ineq} rewrites:
$$\frac{cM+C}{M+1} \mathcal T(\lambda) \le \mathcal T(\nu) \le
\frac{cm+C}{m+1} \mathcal T(\lambda).$$
Therefore,
\begin{align}
\Theta( \mathcal T(\lambda),  \mathcal T(\nu)) &\le
\ln \left( \frac{cm+C}{m+1} \frac{M+1}{cM+C} \right) \nonumber\\
&=\ln \left( \frac{m+\exp(\Theta(\lambda,\nu) )}{m+1} \right) - \ln \left( \frac {M+ \exp(\Theta(\lambda,\nu) )}{M+1}
\right)  \nonumber\\
&=\int_0^{\Theta(\lambda,\nu) } \frac{\exp(x)}{m+\exp(x)} -
  \frac{\exp(x)}{M+\exp(x)} \, dx  \nonumber\\
&=\int_0^{\Theta(\lambda,\nu) } \varphi(\exp(x)) \, dx  \nonumber \\
& \le \Theta(\lambda,\nu) \max_{\R_+} \varphi,  \label{eq:maxtheta}
\end{align}
where
$$\varphi(y)=\frac{y}{m+y} - \frac{y}{M+y} =- \frac{m}{m+y} +
  \frac{M}{M+y}.$$
The function $\varphi$ attains its maximum over $\R_+$ at $y=\sqrt{mM}$, and
  its maximum value is
$$
\max_{\R_+} \varphi =- \frac{m}{m+\sqrt{mM}} +
  \frac{M}{M+\sqrt{mM}}= \frac{-\sqrt{m} + \sqrt{M}}{\sqrt{m} + \sqrt{M}}= \frac{
1-       \sqrt{\frac{m}{M}}}{1 + \sqrt{\frac{m}{M}}},
$$
and thus, using the fact that, by~\eqref{eq:mM}, $\frac{m}{M}\ge \exp(-\Delta)$,
$$
\max_{\R_+} \varphi
\le \frac{
1-       \exp(-\Delta/2) }{1 + \exp(-\Delta/2)} = \tanh\left(\frac
  \Delta 4\right).
$$
Plugging this upper bound of $\max_{\R_+} \varphi$
in~\eqref{eq:maxtheta} yields~\eqref{eq:contrac}.
\end{proof}
One thus gets a contraction in the Hilbert's projective metric if
$\Delta < \infty$. Notice that we implicitly used the fact that
$\mathcal T (\mathcal M_+) \subset \mathcal M_+$ to define $\Delta$
(otherwise $\Theta( \mathcal T \lambda, \mathcal T \nu)$ may not be
defined). This explains why~\eqref{eq:AS1} is needed in the first place.

\subsection{Fixed point theorem and two-sided condition}\label{sec:FP}

For what follows, recall that $\mathcal T^0(\lambda)=\lambda$ by convention.

\begin{theorem}\label{theo:CV}
Let us assume that
\begin{equation}\label{eq:AS2} 
\Delta = \sup_{\lambda,\nu \in \mathcal M_+} \Theta(\mathcal T (\lambda), \mathcal T
(\nu)) < \infty,
\end{equation}
and let us introduce $\rho=\tanh\left( \frac \Delta 4 \right) \in (0, 1)$.
Then, there exists a unique probability measure $\nu_\infty \in
\mathcal M_1$ such that
\begin{equation}\label{eq:FP}
\mathcal T (\nu_\infty) = c \nu_\infty
\end{equation}
for some $c > 0$. Moreover, for any $\lambda_0 \in \mathcal M_+$, one
has, for all $n \ge 0$,
\begin{equation}\label{eq:expo_cv}
\Theta(\nu_n,\nu_\infty) \le \frac{\rho^n}{1-\rho}  \Theta
(\nu_{1},\nu_{0}) 
\end{equation}
where $\nu_n=\frac{\mathcal T^n(\lambda_0)}{\mathcal
  T^n(\lambda_0)(X)}$. This implies in particular: for all $n \ge 0$,
\begin{equation}\label{eq:expo_cv2}
\|\nu_n-\nu_\infty\|_{TV} \le \frac{\Theta
(\nu_{1},\nu_{0})}{1-\rho} \exp\left(\frac{\Theta
(\nu_{1},\nu_{0})}{1-\rho} \right) \rho^n.
\end{equation}
\end{theorem}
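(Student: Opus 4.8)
The plan is to recognise the normalised action of $\mathcal T$ on probability measures as a $\rho$-contraction of a complete metric space, invoke the Banach fixed point theorem, and then convert the resulting Hilbert-metric estimates into total variation estimates via Lemma~\ref{lem:norm}.

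First I would introduce $\bar{\mathcal T}\colon\mathcal M_1\to\mathcal M_1$, $\bar{\mathcal T}(\lambda)=\mathcal T(\lambda)/\mathcal T(\lambda)(X)$, which is well defined since $\mathcal T$ maps $\mathcal M_+$ into $\mathcal M_+$ by~\eqref{eq:AS1} (so $\mathcal T(\lambda)(X)>0$ for $\lambda\in\mathcal M_1\subset\mathcal M_+$). A short induction gives $\bar{\mathcal T}^{\,n}(\lambda)=\mathcal T^n(\lambda)/\mathcal T^n(\lambda)(X)$, hence $\nu_n=\bar{\mathcal T}^{\,n}(\lambda_0)=\bar{\mathcal T}^{\,n}(\nu_0)$ with $\nu_0=\lambda_0/\lambda_0(X)$. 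Because $\Theta$ is a projective metric, $\Theta(\bar{\mathcal T}\lambda,\bar{\mathcal T}\nu)=\Theta(\mathcal T\lambda,\mathcal T\nu)$, so the estimate~\eqref{eq:contrac} of Section~\ref{sec:T} reads $\Theta(\bar{\mathcal T}\lambda,\bar{\mathcal T}\nu)\le\rho\,\Theta(\lambda,\nu)$ with $\rho=\tanh(\Delta/4)\in(0,1)$, and by definition of $\Delta$ one moreover has $\Theta(\bar{\mathcal T}\lambda,\bar{\mathcal T}\nu)\le\Delta<\infty$ for \emph{all} $\lambda,\nu\in\mathcal M_1$.

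The delicate point is that $\Theta$ is extended-valued on $\mathcal M_1$ (two probability measures may be at infinite $\Theta$-distance), so Banach's theorem does not apply to $(\mathcal M_1,\Theta)$ directly. I would therefore work on $\mathcal C:=\overline{\bar{\mathcal T}(\mathcal M_1)}$, the $\Theta$-closure of the image: it has finite $\Theta$-diameter $\le\Delta$; it is forward-invariant under $\bar{\mathcal T}$ (since $\bar{\mathcal T}(\mathcal M_1)$ already is, and $\bar{\mathcal T}$ is $\Theta$-Lipschitz hence $\Theta$-continuous); and it is complete for $\Theta$, being a $\Theta$-closed subset of $\mathcal M_1$, which is $\Theta$-complete by Lemma~\ref{lem:complete} applied to the total variation norm (which satisfies~\eqref{eq:vector_lattice} and for which $\mathcal M$ is complete). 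On $\mathcal C$ the map $\bar{\mathcal T}$ is a genuine $\rho$-contraction of a complete metric space with finite-valued metric, so the Banach fixed point theorem furnishes a unique $\nu_\infty\in\mathcal C$ with $\bar{\mathcal T}(\nu_\infty)=\nu_\infty$, i.e.\ $\mathcal T(\nu_\infty)=c\,\nu_\infty$ with $c=\mathcal T(\nu_\infty)(X)>0$; and any probability measure fixed by $\bar{\mathcal T}$ lies in $\bar{\mathcal T}(\mathcal M_1)\subset\mathcal C$ and hence equals $\nu_\infty$, which gives uniqueness in all of $\mathcal M_1$.

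For the quantitative convergence, iterating the contraction gives $\Theta(\nu_k,\nu_{k+1})=\Theta(\bar{\mathcal T}^{\,k}\nu_0,\bar{\mathcal T}^{\,k}\nu_1)\le\rho^k\Theta(\nu_0,\nu_1)$, so $(\nu_n)_{n\ge1}$ is $\Theta$-Cauchy in $\mathcal C$ and converges to $\nu_\infty$; summing the geometric series and using the triangle inequality, $\Theta(\nu_n,\nu_\infty)=\lim_{m\to\infty}\Theta(\nu_n,\nu_m)\le\sum_{k\ge n}\rho^k\Theta(\nu_1,\nu_0)=\frac{\rho^n}{1-\rho}\Theta(\nu_1,\nu_0)$, which is~\eqref{eq:expo_cv} (trivially true when $\Theta(\nu_1,\nu_0)=\infty$). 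Finally, since $\|\nu_n\|_{TV}=\|\nu_\infty\|_{TV}=1$ and $\|\cdot\|_{TV}$ satisfies~\eqref{eq:vector_lattice}, Lemma~\ref{lem:norm} gives $\|\nu_n-\nu_\infty\|_{TV}\le\exp(\Theta(\nu_n,\nu_\infty))-1$; writing $a=\Theta(\nu_1,\nu_0)/(1-\rho)$ and using $\Theta(\nu_n,\nu_\infty)\le a\rho^n$ together with the elementary estimate $\exp(a\rho^n)-1=\int_0^{a\rho^n}\exp(t)\,dt\le a\rho^n\exp(a\rho^n)\le a\rho^n\exp(a)$ yields~\eqref{eq:expo_cv2}. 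The only real obstacle is the careful handling of the extended-valued metric $\Theta$, i.e.\ restricting to the finite-diameter invariant set $\mathcal C$ before applying Banach's theorem; the rest is a direct assembly of the Proposition of Section~\ref{sec:T} and Lemmas~\ref{lem:norm} and~\ref{lem:complete}.
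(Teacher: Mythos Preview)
Your proposal is correct and follows essentially the same route as the paper: both rely on the contraction estimate~\eqref{eq:contrac}, the completeness of $\mathcal M_1$ in the Hilbert metric (Lemma~\ref{lem:complete}), and Lemma~\ref{lem:norm} to pass to total variation. The only cosmetic difference is that you package the Cauchy-sequence argument as an invocation of the Banach fixed point theorem on the finite-diameter invariant set $\mathcal C=\overline{\bar{\mathcal T}(\mathcal M_1)}$, whereas the paper writes out the Cauchy estimate $\Theta(\nu_m,\nu_n)\le\frac{\rho^n}{1-\rho}\Theta(\nu_1,\nu_0)$ by hand, passes to the limit using Lemma~\ref{lem:complete}, and then checks the fixed-point property via the $\|\cdot\|_{TV}$-continuity of $\mathcal T$; uniqueness is argued directly from $\Theta(\mu_1,\mu_2)=\Theta(\mathcal T\mu_1,\mathcal T\mu_2)\le\rho\,\Theta(\mu_1,\mu_2)$ (finite since $\le\Delta$). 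Your restriction to $\mathcal C$ is a clean way to sidestep the extended-valuedness of $\Theta$ that the paper handles implicitly by starting the Cauchy argument at $n\ge 1$, but the substance of the two proofs is identical.
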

\begin{proof}
We refer to~\cite[Theorem 1]{Birkhoff1957} for a similar reasoning.
The uniqueness of a solution to~\eqref{eq:FP} is easy to obtain from
the assumption $\Delta < \infty$. Indeed, let us assume that two
probability measures $\mu_1$ and $\mu_2$ are such that
$$\mathcal T (\mu_1) = c_1 \mu_1 \text{ and } \mathcal T (\mu_2) = c_2
\mu_2$$
for some $c_1>0$ and $c_2>0$.
Then, using~\eqref{eq:contrac}, one has
$$\Theta (\mathcal T (\mu_1),\mathcal T (\mu_2)) \le \rho \Theta
(\mu_1,\mu_2)$$
where $\rho=\tanh\left( \frac \Delta 4 \right) \in (0, 1)$. Therefore,
$$\Theta (c_1 \mu_1,c_2 \mu_2) \le \rho \Theta
(\mu_1,\mu_2)$$
and one gets that $ \Theta
(\mu_1,\mu_2)=0$ since $\Theta (c_1 \mu_1,c_2 \mu_2) =\Theta  (\mu_1,
\mu_2) $. This implies $\mu_1=\mu_2$ since both are probability measures.

We will now show the existence of a solution to~\eqref{eq:FP} using
the standard argument of Banach fixed-point theorem. Let $\lambda_0 \in \mathcal M_+$, and let us consider, for $n \ge 0$,
$$\lambda_n = \mathcal T^n (\lambda_0).$$
Using~\eqref{eq:contrac}, one has, for all $n \ge 1$,
$$\Theta (\lambda_{n+1},\lambda_n) \le \rho \Theta
(\lambda_{n},\lambda_{n-1}).$$
Thus, for all $ n \ge 1$,
\begin{equation}\label{eq:expo_1}
\Theta (\lambda_{n+1},\lambda_n) \le \rho^{n} \Theta
(\lambda_{1},\lambda_{0}),
\end{equation}
and the triangular inequality yields, for all $m \ge n \ge 1$,
$$\Theta(\lambda_m, \lambda_n) \le \left(\sum_{k=n}^{m-1} \rho^k
\right) \Theta
(\lambda_{1},\lambda_{0}) \le \frac{\rho^n}{1-\rho}  \Theta
(\lambda_{1},\lambda_{0}). $$
Let us introduce the probability measures: $\forall n \ge 0$,
$$\nu_n = \frac{\lambda_n}{ \lambda_n(X)}.$$
One has that $\Theta(\nu_m, \nu_n) =\Theta (\lambda_m, \lambda_n)$ and
thus for all $m \ge n \ge 1$,
\begin{equation}\label{eq:cauchy}
\Theta(\nu_m, \nu_n) \le \frac{\rho^n}{1-\rho}  \Theta
(\nu_{1},\nu_{0}) 
\end{equation}
which shows that $(\nu_n)_{n \ge 0}$ is a Cauchy sequence for the
$\Theta$-metric. From Lemma~\ref{lem:complete}, this implies that
$\nu_n \in \mathcal M_1$ converges to some $\nu_\infty \in \mathcal M_1$ as $n \to \infty$ in the $\Theta$-metric, and
thus, from Lemma~\ref{lem:norm}, in total variation
norm. From~\eqref{eq:expo_1}, one has that
$$\lim_{n \to \infty} \Theta\left(\frac{\mathcal T (\nu_n)}{(\mathcal T (\nu_n))(X)},\nu_n\right)=0$$
and thus, using Lemma~\ref{lem:norm},
$$\lim_{n \to \infty}\left\|\frac{\mathcal T (\nu_n)}{(\mathcal T
  (  \nu_n))(X)} - \nu_n \right\|_{TV} = 0.$$
Since $\lim_{n \to \infty} \nu_n =\nu_\infty$ in total variation norm,
then $\lim_{n \to \infty}\mathcal T (\nu_n)= \mathcal T (\nu_\infty) $ in
total variation norm, and thus $\lim_{n \to \infty}(\mathcal T
    (\nu_n))(X)= (\mathcal T (\nu_\infty))(X)$. One thus obtains:
$$\frac{\mathcal T (\nu_\infty)}{(\mathcal T (\nu_\infty))(X)} = \nu_\infty$$
which establishes the existence of the solution to~\eqref{eq:FP}.

Moreover, by
letting $m \to \infty$ in~\eqref{eq:cauchy}, one
gets~\eqref{eq:expo_cv}. Using Lemma~\ref{lem:norm}, one obtains from~\eqref{eq:expo_cv}: for all
$n \ge 0$,
$$\|\nu_n-\nu_\infty\|_{TV}\le \left( \exp \left(\frac{  \Theta
(\nu_{1},\nu_{0}) }{1-\rho} \rho^n \right) - 1 \right)$$
which yields~\eqref{eq:expo_cv2}.
\end{proof}
\begin{remark}
As usual in a Banach fixed point theorem, it is easy to obtain a
similar result assuming that $\mathcal T^r$ satisfies both
assumptions~\eqref{eq:AS1} and~\eqref{eq:AS2} for some positive integer~$r$.
\end{remark}

It remains to discuss how to get Assumptions~\eqref{eq:AS1}
and~\eqref{eq:AS2} in practice. A natural sufficient condition is the so-called two-sided
condition~\eqref{eq:2S} (notice that in~\eqref{eq:2S}, $s$ is actually with values in $(0,1]$ since for all $x\in
X$, $s(x) \le K(x,X)$).
\begin{proposition}\label{prop:twosided}
Assume that there exist a measurable function $s: X \to \R^*_+$, 
a constant $R >0$, and a probability
measure $\pi \in \mathcal M_1$ such that for all $x \in X$,
\begin{equation}\label{eq:2S}
s(x) \pi(dy) \le K(x,dy) \le R s(x) \pi(dy).
\end{equation}
Then, $\mathcal T$ satisfies~\eqref{eq:AS1} and~\eqref{eq:AS2} (with
$\Delta \le 2 \ln R$). In
particular, the results of Theorem~\ref{theo:CV} hold and one thus
obtains: for all initial condition $\lambda_0 \in \mathcal M_+$, for
all $n \ge 1$,
\begin{equation}\label{eq:expo_cv3}
\|\nu_n - \nu_\infty\|_{TV} \le (R+1) (\ln R) R^{R+1} \left(
  \frac{R-1}{R+1} \right)^{n-1}
\end{equation}
where $\nu_n=\frac{\mathcal T^n(\lambda_0)}{\mathcal
  T^n(\lambda_0)(X)}$.
\end{proposition}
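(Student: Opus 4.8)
The plan is to verify the two abstract hypotheses \eqref{eq:AS1} and \eqref{eq:AS2} directly from the two-sided condition \eqref{eq:2S}, then invoke Theorem~\ref{theo:CV}, and finally extract the explicit constant in \eqref{eq:expo_cv3} by applying the theorem to a once-shifted sequence. First I would integrate the left inequality in \eqref{eq:2S} over $y\in X$: since $\pi(X)=1$ this gives $s(x)\le K(x,X)$, so $K(x,X)\ge s(x)>0$ for every $x$, which is \eqref{eq:AS1prime}, hence \eqref{eq:AS1} by Lemma~\ref{lem:AS1}. (Integrating both inequalities also forces $R\ge 1$, so $\ln R\ge 0$; and $K$ sub-Markov gives $0<s\le K(\cdot,X)\le 1$.) The key estimate is then: for any $\lambda\in\mathcal M_+$, set $c_\lambda:=\int_X s(x)\,\lambda(dx)\in(0,\lambda(X)]$ (positive since $s>0$ and $\lambda\neq 0$, finite since $s\le 1$ and $\lambda(X)<\infty$); integrating \eqref{eq:2S} against $\lambda(dx)$ yields
\[
c_\lambda\,\pi\ \le\ \mathcal T(\lambda)\ \le\ R\,c_\lambda\,\pi .
\]
By definition of the Hilbert metric this gives $c(\pi,\mathcal T(\lambda))\ge c_\lambda$ and $C(\pi,\mathcal T(\lambda))\le R c_\lambda$, hence $\Theta(\mathcal T(\lambda),\pi)\le\ln R$. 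The triangle inequality for $\Theta$ then yields $\Theta(\mathcal T(\lambda),\mathcal T(\nu))\le 2\ln R$ for all $\lambda,\nu\in\mathcal M_+$, i.e.\ $\Delta\le 2\ln R<\infty$, which is \eqref{eq:AS2}.

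At this point Theorem~\ref{theo:CV} applies and provides the unique probability measure $\nu_\infty$ with $\mathcal T(\nu_\infty)=c\nu_\infty$, together with the convergence $\nu_n\to\nu_\infty$; moreover $\rho=\tanh(\Delta/4)\le\tanh\!\left(\tfrac12\ln R\right)=\tfrac{R-1}{R+1}$, so $1-\rho\ge\tfrac{2}{R+1}$. To obtain the explicit rate \eqref{eq:expo_cv3} I would apply \eqref{eq:expo_cv2} not to $\lambda_0$ itself but to the initial datum $\lambda_0':=\mathcal T(\lambda_0)\in\mathcal M_+$: its normalized iterates are $\nu_n'=\nu_{n+1}$, and its ``first step'' obeys
\[
\Theta(\nu_1',\nu_0')=\Theta\!\bigl(\mathcal T(\mathcal T\lambda_0),\mathcal T(\lambda_0)\bigr)\le\Delta\le 2\ln R ,
\]
where the bound by $\Delta$ is legitimate precisely because both arguments before applying $\mathcal T$ lie in $\mathcal M_+$. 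Using the monotonicity of $t\mapsto t\,e^t$ and plugging $\Theta(\nu_1',\nu_0')\le 2\ln R$ and $\tfrac{1}{1-\rho}\le\tfrac{R+1}{2}$ into \eqref{eq:expo_cv2}, one gets for all $n\ge 0$
\[
\|\nu_{n+1}-\nu_\infty\|_{TV}\le (R+1)(\ln R)\,R^{R+1}\,\rho^{n}\le (R+1)(\ln R)\,R^{R+1}\Bigl(\tfrac{R-1}{R+1}\Bigr)^{n},
\]
which is \eqref{eq:expo_cv3} after relabelling $n\mapsto n-1$.

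The argument is essentially bookkeeping once \eqref{eq:AS1} and \eqref{eq:AS2} are established; the one point that needs care is that $\Theta(\nu_1,\nu_0)$ in \eqref{eq:expo_cv2} can be infinite for an arbitrary initial measure $\lambda_0$ (the two-sided condition controls only measures of the form $\mathcal T(\cdot)$), which is why one must first run one step of $\mathcal T$ — after that every iterate sits within Hilbert distance $\ln R$ of $\pi$. The other small trap is simply keeping the direction of the inequalities in the definitions of $c(\cdot,\cdot)$ and $C(\cdot,\cdot)$ straight when passing from $c_\lambda\pi\le\mathcal T(\lambda)\le Rc_\lambda\pi$ to the bound on $\Theta$.
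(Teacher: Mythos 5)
Your proof is correct and follows essentially the same route as the paper: verify \eqref{eq:AS1prime} from $K(x,X)\ge s(x)>0$, integrate \eqref{eq:2S} against $\lambda$ to bound $\Delta\le 2\ln R$, and then apply \eqref{eq:expo_cv2} to the once-shifted sequence so that $\Theta(\nu_2,\nu_1)\le\Delta$ replaces the possibly infinite $\Theta(\nu_1,\nu_0)$. The only cosmetic difference is that you obtain $\Delta\le 2\ln R$ by sandwiching each $\mathcal T(\lambda)$ within Hilbert distance $\ln R$ of $\pi$ and using the triangle inequality, whereas the paper compares $\mathcal T(\lambda)$ and $\mathcal T(\nu)$ directly; the two computations are equivalent.
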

\begin{proof}
We refer to~\cite[Theorem 3]{Birkhoff1957} for a similar reasoning.
Let us first check that~\eqref{eq:AS1} holds. By
Lemma~\ref{lem:AS1}, it is enough to check that $K(x,X) \neq 0$ for
all $x \in X$, but this is obvious since $K(x,X) \ge s(x) > 0$.

 Let us now check~\eqref{eq:AS2}, namely
$$\Delta = \sup_{\lambda,\nu \in \mathcal M_+} \Theta(\mathcal T (\lambda), \mathcal T
(\nu)) < \infty.$$
 Let $\lambda$ and $\nu$ be two measures in $\mathcal M_+$. One has
$$\mathcal T (\nu) \le R \left(\int_X s(x) \nu(dx) \right) \pi(dy) \le R \frac{
  \int_X s(x) \nu(dx)}{\int_X s(x) \lambda(dx)} \left(\int_X s(x)
\lambda(dx) \right) \pi(dy),$$
so
$$\mathcal T (\nu)  \le R \frac{
  \int_X s(x) \nu(dx)}{\int_X s(x) \lambda(dx)} \mathcal T
(\lambda),$$
which shows that 
$$C(\mathcal T (\lambda), \mathcal T (\nu)) \le R \frac{
  \int_X s(x) \nu(dx)}{\int_X s(x) \lambda(dx)} < \infty.$$
Likewise,
$$\mathcal T( \nu )\ge  \left(\int_X s(x) \nu(dx) \right) \pi(dy) \ge  \frac{
  \int_X s(x) \nu(dx)}{R \int_X s(x) \lambda(dx)} R \left(\int_X s(x)
\lambda(dx) \right) \pi(dy),$$
so
$$\mathcal T( \nu )\ge  \frac{
  \int_X s(x) \nu(dx)}{R\int_X s(x) \lambda(dx)} \mathcal T
(\lambda),$$
which shows that 
$$c(\mathcal T (\lambda), \mathcal T (\nu)) \ge  \frac{
  \int_X s(x) \nu(dx)}{R\int_X s(x) \lambda(dx)}  > 0.$$
 Therefore,
$$\Theta(\mathcal T (\lambda), \mathcal T (\nu))=\ln\left(
  \frac{C(\mathcal T (\lambda), \mathcal T (\nu))}{c(\mathcal T( \lambda),
    \mathcal T (\nu))}\right) \le \ln (R^2)$$
so that $\Delta \le \ln (R^2) < \infty$. Thus,
using~\eqref{eq:expo_cv2}, one
gets: for all $n \ge 1$,
\begin{align}
\|\nu_n-\nu_\infty\|_{TV} 
&\le \frac{\Theta
(\nu_{2},\nu_{1})}{1-\rho} \exp\left(\frac{\Theta
(\nu_{2},\nu_{1})}{1-\rho} \right) \rho^{n-1}, \nonumber\\
&\le \frac{\Delta}{1-\rho} \exp\left(\frac{\Delta}{1-\rho} \right) \rho^{n-1}, \nonumber
\end{align}
with $\rho=\tanh\left(\frac \Delta 4 \right) \le \tanh\left(\frac{\ln
    R}{2} \right) = \frac{R-1}{R+1}$, so that $\frac{\Delta}{1-\rho}
\le (\ln R) (R+1)$. This yields~\eqref{eq:expo_cv3}. 
\end{proof}


%
%
%

\section*{Acknowledgements}
The authors would like to thank Florian Angeletti who worked on the
early stages of this project and Julien Reygner for fruitful
discussions in particular about the proof of Proposition~\ref{Prop:ReactiveEntranceHarris} and connection with
potential theory.
This work benefitted from the support of
the European Research Council under the European Union's Seventh
Framework Programme (FP/2007-2013) / ERC Grant Agreement number
614492. Part of this project was carried out as TL was a visiting
professor at Imperial College of London (ICL), with a visiting professorship
grant from the Leverhulme
Trust. The Department of Mathematics at ICL and
the Leverhulme Trust are warmly thanked for their support.



\def\cprime{$'$}

\end{document}